\newcommand{\CC}{\mathbb{C}}
\newcommand{\NN}{\mathbb{N}}
\newcommand{\RR}{\mathbb{R}}
\newcommand{\calC}{\mathcal{C}}
\newcommand{\calS}{\mathcal{S}}
\newcommand{\calG}{\mathcal{G}}
\newcommand{\calJ}{\mathcal{J}}
\newcommand{\calZ}{\mathcal{Z}}
\newcommand{\frakF}{\mathfrak{F}}
\newcommand{\frakp}{\mathfrak{p}}
\newcommand{\frake}{\mathfrak{e}}
\newcommand{\fraks}{\mathfrak{s}}
\newcommand{\frakW}{\mathfrak{W}}
\newcommand{\Id}{I}
\newcommand{\norm}[1]{\lVert {#1} \rVert}
\newcommand{\tr}{\operatorname{tr}}
\DeclareMathOperator{\lin}{span}
\newcommand{\Dom}{\operatorname{Dom}}
\newcommand{\sigmaEss}{\sigma_{\mathrm{ess}}}
\newcommand{\sigmaP}{\sigma_{\mathrm{p}}}
\newcommand{\ud}{{\: \rm d}}
\newcommand{\supp}{\operatornamewithlimits{supp}}
\newcommand{\rphis}[5]{\,_{#1}\phi_{#2} \left( \genfrac{.}{.}{0pt}{}{#3}{#4}
\ ; #5 \right)}
\renewcommand{\lin}{\mathop\mathrm{span}\nolimits}
\newcommand{\sgn}{\mathop\mathrm{sgn}\nolimits}
\newcommand{\Tr}{\mathop\mathrm{Tr}\nolimits}
\newcommand{\Res}{\mathop\mathrm{Res}}
\newtheorem{theorem}{Theorem}[section]
\newtheorem{proposition}[theorem]{Proposition}
\newtheorem{lemma}[theorem]{Lemma}
\newtheorem{corollary}[theorem]{Corollary}
\theoremstyle{plain}
\newcounter{thm}
\numberwithin{equation}{section}
\theoremstyle{definition}
\newtheorem{example}[theorem]{Example}
\newtheorem{remark}[theorem]{Remark}
\newtheorem*{remark*}{Remark}
\title{On positive Jacobi matrices with compact inverses}
\author{Pavel \v{S}\v{t}ov\'{\i}\v{c}ek}
\address{
	Pavel \v{S}\v{t}ov\'{\i}\v{c}ek\\
        Department of Mathematics\\
        Faculty of Nuclear Science\\
        Czech Technical University in Prague\\
        Trojanova 13\\
        120 00 Praha, Czech Republic
}
\email{stovicek@fjfi.cvut.cz}
\author{Grzegorz \'{S}widerski}
\address{
    Grzegorz \'{S}widerski \\
    Institute of Mathematics \\ 
    Polish Academy of Sciences \\
    ul. Śniadeckich 8 \\
    00-696 Warsaw, Poland
}
\email{grzegorz.swiderski@pwr.edu.pl}
\curraddr{Faculty of Pure and Applied Mathematics, Wroclaw University of Science and Technology, Wyb. Wyspiańskiego 27, 50-370 Wroclaw, Poland}
\subjclass[2020]{Primary 47B36; Secondary 42C05, 33C45}
\keywords{Orthogonal polynomials; Jacobi matrix; functions of the second kind; zero counting measures; Christoffel--Darboux kernels; $q$-Laguerre polynomials}
\begin{document}
\selectlanguage{english}

\begin{abstract}
We consider positive Jacobi matrices $J$ with compact inverses and consequently
with purely discrete spectra.
A number of properties of the corresponding sequence of orthogonal polynomials is studied including the convergence of their zeros, the vague convergence
of the zero counting measures and of the Christoffel--Darboux kernels on the diagonal.
Particularly, if the inverse of $J$ belongs to some Schatten class,
we identify the asymptotic behaviour of the sequence of orthogonal polynomials and express it in terms of its regularized characteristic function.
In the even more special case when the inverse of $J$ belongs to the trace class 
we derive various formulas for the orthogonality measure, eigenvectors of $J$ as well as for the functions of the second kind and related objects.
These general results are given a more explicit form in the case when $-J$ is a generator of a Birth--Death process.
Among others we provide a formula for the trace of the inverse of $J$.
We illustrate our results by introducing and studying a modification of $q$-Laguerre polynomials corresponding to a determinate moment problem.
\end{abstract}

\maketitle

\section{Introduction} \label{sec:1}
Given two sequences $a=(a_n : n \in \NN_0), b=(b_n : n \in \NN_0)$ of positive and real numbers, respectively, we define an infinite tridiagonal matrix by
\[
	\calJ =
    \begin{pmatrix}
    b_0	& a_0	&		&       \\
	a_0 & b_1	& a_1	&         \\
		& a_1	&  b_2 	& \ddots \\
	&		&		\ddots & \ddots  
    \end{pmatrix}.
\]
The action of $\calJ$ is well-defined on the linear space $\ell(\NN_0)$ of all complex-valued sequences treated as column vectors. Let the operator $J$ be the closure in the Hilbert space $\ell^2(\NN_0)$ of the action of $\calJ$ on the sequences with finite support. Let us recall that
\[
	\ell^2(\NN_0) = \bigg\{ x \in \ell(\NN_0) : \sum_{n=0}^\infty |x_n|^2 < \infty \bigg\}, \quad
	\langle x,y \rangle = \sum_{n=0}^\infty \overline{x_n} y_n.
\]
For any $n \in \NN_0$ by $\bm{e}_n$ we shall denote the sequence with one on the $n$th position and zero elsewhere. Then $(\bm{e}_n : n \in \NN_0)$ is an orthonormal basis of $\ell^2(\NN_0)$, the so called standard (or canonical) orthonormal basis.

The symmetric operator $J$ always has self-adjoint extensions on $\ell^2(\NN_0)$. Let $\tilde{J}$ be any such extension. 
Define a probability measure on the Borel subsets of the real line by
\begin{equation}
	\label{eq:measureOrth}
	\mu(\cdot) = \langle \bm{e}_0, E_{\tilde{J}}(\cdot) \bm{e}_0 \rangle,
\end{equation}
where $E_{\tilde{J}}$ is the projection-valued spectral measure of $\tilde{J}$, i.e. $\tilde{J} = \int_\RR x \ud E_{\tilde{J}}(x)$ in the weak sense. The support of $\mu$ coincides with the spectrum $\sigma(\tilde{J})$.

Associated with $\calJ$ is the sequence of polynomials $(P_n : n \in \NN_0)$ satisfying the three-term recurrence relation
\begin{equation} \label{eq:1}
	z P_n(z) = a_{n-1} P_{n-1}(z) + b_n P_n(z) + a_n P_{n+1}(z), \quad n \geq 1
\end{equation}
with the initial conditions
\begin{equation} \label{eq:2}
	P_0(z) \equiv 1, \quad P_1(z) = \frac{z-b_0}{a_0}.
\end{equation}
It turns out that $(P_n : n \geq 0)$ is a sequence of orthonormal polynomials in $L^2(\mu)$, i.e.
\[
	\int_\RR \overline{P_n(x)} P_m(x) \ud \mu(x) =
	\begin{cases}
		1 & n=m, \\
		0 & \text{otherwise}.
	\end{cases}
\]
It is well-known that the Hamburger moment problem for such an orthonormal polynomial sequence $(P_n(x) : n \geq 0)$
is determinate, i.e. the sequence is orthonormal with respect to a unique probability measure on the real line, if and only if the Jacobi operator $J$ itself is self-adjoint (see e.g. \cite[Theorem 6.10]{Schmudgen2017}).

For our purposes particularly interesting will be those Jacobi matrices which are related to generators of Birth-Death processes. In that case
\begin{equation} \label{eq:34}
	a_n = \sqrt{\lambda_n \mu_{n+1}}, \quad
	b_n = \lambda_n + \mu_n,
\end{equation}
where $(\lambda_n : n \in \NN_0)$ and $(\mu_n : n \in \NN_0)$ are sequences such that $\lambda_n > 0, \mu_{n+1} > 0$ for any $n \in \NN_0$, and $\mu_0 \geq 0$.
For the motivation of Jacobi parameters \eqref{eq:34} see e.g. \cite[Section 5.1]{Monotonic}. For probabilistic applications of $-J$ see e.g. \cite[Chapter 3]{Iglesia2021}.

In this paper we focus mainly on the case when $J$ is positive-definite and the inverse of its specific self-adjoint extension, the so-called \emph{Friedrichs extension}, belongs to the trace class or, more generally, to a Schatten class. Some first results on this topic have been obtained in papers \cite{Stovicek2020,Stovicek2022}. Here these results are substantially extended and generalized in several directions.

In Sections \ref{sec:5} and \ref{sec:4} we assume that $J$ is positive-definite
and $J^{-1}$ belongs to the trace class.
This situation is firstly studied, in Section~\ref{sec:5}, on a general level.
Naturally, under the aforementioned assumptions the spectrum of $J$ is pure point and also the orthogonality measure for $(P_n(x) : n \geq 0)$ is purely atomic. As a first step, we derive a convenient
criterion, expressed in terms of the real sequence $(P_n(0) : n \geq 0)$, which implies the desired properties of $J$.
Then we derive several formulas for the orthogonality measure and for eigenvectors of $J$.
The characteristic function of $J$ and its associated semi-infinite principal submatrices as well as the functions of the second kind play an important role in this study.

Then, in Section~\ref{sec:4}, we specialize these general results to Jacobi operators corresponding
to generators of Birth-Death processes. This is a broad class of Jacobi operators which are widely used in various
applications. For this type of Jacobi operators we derive a criterion for the trace class inverse,
a formula for the characteristic function and an additional formula related to the orthogonality measure. 
This way we extend substantially the results obtained earlier in~\cite{Stovicek2022}
but only for a very special class of Jacobi operators.

In Section~\ref{sec:4new} we advance our studies by considering a more general case
when the inverse of the Friedrichs extension of $J$ is supposed to be compact.
In Section~\ref{sec:zero} we study weak convergence of the zero counting measures of $P_n$ and in Section~\ref{sec:CDKernel} the weak behaviour of the  Christoffel--Darboux kernels on the diagonal.
In Section~\ref{sec:2}, under the additional assumption that the inverse of the Friedrichs extension of $J$ belongs to an arbitrary Schatten class, the regularized characteristic function of the Jacobi operator is defined and it is shown to be the locally uniform limit
of a certain sequence of elementary analytic functions.
Thus we generalize a result from \cite{Stovicek2020} where
the characteristic function of $J$ has been introduced and studied
under the assumption that $J^{-1}$ is a trace class operator.
Moreover, in Section~\ref{sec:pos} we show that for Jacobi operators which are not semibounded a similar convergence does not hold in general. 

Finally as an application, in Section~\ref{sec:6},
we introduce and study an alternative $q$-version to the Laguerre polynomials
$(L_{n}^{(\alpha)}(x) : n \geq 0)$ which we call the modified $q$-Laguerre polynomials
and denote $(\tilde{L}_{n}^{(\alpha)}(x;q) : n \geq 0)$.
The particular case with $\alpha=0$ has been studied in~\cite{Stovicek2022}.
A distinguished feature of these newly introduced $q$-Laguerre polynomials is that
the respective Hamburger moment problem is determinate.
Furthermore, they can be shown to correspond to the generators of a Birth-Death process.
They are related to the (standard) $q$-Laguerre polynomials, as introduced and studied in~\cite{Moak1981},
by a construction based on the notion of quasi-orthogonal sequence~\cite{Bracciali2018}.
The $q$-Laguerre polynomials are known explicitly and, owing to this kind of construction,
the same is in principle true for the modified $q$-Laguerre polynomials.
Notably, the moment sequence for $(\tilde{L}_{n}^{(\alpha)}(x;q) : n \geq 0)$ can be derived explicitly
and the orthogonality measure turns out to be supported on the roots
of the (appropriately rescaled) $q$-Bessel function $J_{\alpha+1}^{(2)}(2\sqrt{z};q)$.

\section{More on positive Jacobi operators with a trace class inverse} \label{sec:5}

\subsection{A criterion for the trace class}
In Sections \ref{sec:5} and \ref{sec:4} we assume that $J$ is positive-definite and $J^{-1}$ is
a trace class operator. This is what has been assumed also in the earlier paper \cite{Stovicek2020}.
In this subsection we prove a convenient criterion that guarantees $J^{-1}$ to exist and to belong to the trace class. It turns out that with this criterion the assumption on positive-definiteness of $J$ can be relaxed so that it is fully sufficient to assume that $J$ is just positive-semidefinite meaning that its quadratic form is non-negative. The positive-definiteness is then obtained as a straightforward consequence.
This simplifies the discussion notably.

In the sequel, the orthogonality measure for the orthonormal polynomial sequence
$(P_{n}(x) : n \geq 0)$ is called $\mu$ and $\varrho(J)$ stands for
the resolvent set of $J$ while $\sigma(J)$ denotes its spectrum. 

\begin{theorem} \label{thm:T1} 
Assume that
\begin{equation}
	\label{eq:sum-Pn2}
	\sum_{n=0}^{\infty} P_{n}(0)^{2} = \infty
\end{equation}
and $J$ is positive-semidefinite. Then
$J$ is self-adjoint and invertible. Moreover, $J^{-1}$ belongs to
the trace class if and only if
\begin{equation}
	\label{eq:sum-trace}
	-\sum_{n=0}^{\infty}
	\Bigg(
	\sum_{j=n}^{\infty}
	\frac{1}{a_{j} P_{j}(0) P_{j+1}(0)}
	\Bigg)
	P_{n}(0)^{2} < \infty.
\end{equation}
If so then $\Tr(J^{-1})$ equals the sum in \eqref{eq:sum-trace} (including the sign).
\end{theorem}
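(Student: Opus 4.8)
The plan is to separate the qualitative claims from the trace formula. First I would deduce self-adjointness directly from the hypothesis \eqref{eq:sum-Pn2}, \emph{without} using positive-semidefiniteness: by the standard dichotomy for the Hamburger moment problem, in the indeterminate case $\sum_{n}|P_n(z)|^2$ converges for every $z\in\CC$, in particular at $z=0$; since \eqref{eq:sum-Pn2} says exactly that $(P_n(0))_{n}\notin\ell^2(\NN_0)$, the moment problem is determinate and the closure $J$ is self-adjoint. The inverse $J^{-1}$ is then understood as the self-adjoint inverse on $\operatorname{Ran} J$, so that ``invertible'' amounts to $0\notin\sigmaP(J)$ (this cannot be strengthened to $0\notin\sigma(J)$ in general: the choice $a_n\equiv1$, $b_n\equiv2$ satisfies both hypotheses yet has $0\in\sigmaEss(J)$). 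Injectivity again follows from \eqref{eq:sum-Pn2}: any $\psi\in\Dom(J)$ with $J\psi=0$ solves the recurrence \eqref{eq:1}--\eqref{eq:2} at $z=0$, so $\psi_n=\psi_0P_n(0)$, and $(P_n(0))_n\notin\ell^2(\NN_0)$ forces $\psi_0=0$.

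Next I would record the sign pattern responsible for the minus sign in \eqref{eq:sum-trace}. Writing $J_N$ for the $N\times N$ upper-left truncation, the identity $\langle x,Jx\rangle=\langle x,J_Nx\rangle$ for $x$ supported on the first $N$ coordinates, together with $J\ge0$, shows that $\langle x,J_Nx\rangle=0$ gives $J^{1/2}x=0$, hence $Jx=0$; by the injectivity above this is impossible for $x\ne0$, so each $J_N$ is strictly positive definite. Therefore all zeros of $P_N(z)=\det(zI-J_N)/(a_0\cdots a_{N-1})$ are positive, whence $P_n(0)\ne0$ for all $n$ and $\sgn P_n(0)=(-1)^n$. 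In particular $a_jP_j(0)P_{j+1}(0)<0$, so each summand $1/(a_jP_j(0)P_{j+1}(0))$ is negative.

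The core computation is the explicit inversion of $J_N$. With $\pi_n:=P_n(0)$ the polynomial solution of $J\pi=0$ and $\rho_n:=\pi_n\sum_{j=0}^{n-1}(a_j\pi_j\pi_{j+1})^{-1}$ the independent solution obtained by reduction of order (normalized so that the discrete Wronskian $a_n(\pi_n\rho_{n+1}-\pi_{n+1}\rho_n)\equiv1$), the solution obeying the Dirichlet condition at the right endpoint is $\chi_n=\pi_N\rho_n-\rho_N\pi_n$, with $\chi_N=0$. The classical Green's function formula for a tridiagonal matrix then gives $(J_N^{-1})_{m,n}=\pi_{\min(m,n)}\chi_{\max(m,n)}/\pi_N$, and on the diagonal
\begin{equation*}
	(J_N^{-1})_{n,n}=-\,P_n(0)^2\sum_{j=n}^{N-1}\frac{1}{a_jP_j(0)P_{j+1}(0)},\qquad 0\le n\le N-1,
\end{equation*}
which is positive by the sign analysis. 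Summing in $n$ exhibits $\Tr(J_N^{-1})$ as precisely the finite truncation of the double series in \eqref{eq:sum-trace}.

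Finally I would pass to $N\to\infty$, and this is the step I expect to be the main obstacle. The formula makes each $(J_N^{-1})_{n,n}$ increase with $N$, so $\lim_N(J_N^{-1})_{n,n}=-P_n(0)^2\sum_{j=n}^\infty(a_jP_j(0)P_{j+1}(0))^{-1}\in(0,\infty]$ exists. The delicate point is to identify this limit with the true diagonal $\langle\bm e_n,J^{-1}\bm e_n\rangle=\|J^{-1/2}\bm e_n\|^2$. Since $J$ is essentially self-adjoint the finitely supported vectors form a core, so $P_NJP_N\to J$ in the strong resolvent sense; together with the operator-Jensen compression inequality $J_N^{-1}\le P_NJ^{-1}P_N$ (valid on $\operatorname{Ran}P_N$ whenever $J^{-1}$ is bounded) this pins the limit to $\langle\bm e_n,J^{-1}\bm e_n\rangle$. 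As $J^{-1}\ge0$, its trace is $\sum_n\langle\bm e_n,J^{-1}\bm e_n\rangle\in[0,\infty]$, and monotone convergence identifies it with $\lim_N\Tr(J_N^{-1})$, i.e.\ the double series in \eqref{eq:sum-trace}; thus $J^{-1}$ is trace class precisely when that series is finite, with $\Tr(J^{-1})$ equal to its value. The case $0\in\sigmaEss(J)$, where $J^{-1}$ is unbounded, must be treated with care, to confirm that divergence of the series genuinely reflects the failure of the trace-class property rather than a breakdown of the limiting procedure.
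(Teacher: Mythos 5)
Your route is genuinely different from the paper's: you invert the finite truncations $\calJ_N$ explicitly via a discrete Green's function and try to pass to the limit $N\to\infty$, whereas the paper works directly with the full resolvent, quoting from \cite{Stovicek2020} the identity
$\langle \bm e_n,(J-z\Id)^{-1}\bm e_n\rangle=-P_n(z)^2\sum_{j\ge n}\bigl(a_jP_j(z)P_{j+1}(z)\bigr)^{-1}=\int P_n(x)^2(x-z)^{-1}\ud\mu(x)$ for $z\in\varrho(J)$, setting $z=-\varepsilon$ and letting $\varepsilon\downarrow 0$ by monotone convergence to obtain $\|J^{-1/2}\bm e_n\|^2=\int P_n(x)^2 x^{-1}\ud\mu(x)=-P_n(0)^2\sum_{j\ge n}\bigl(a_jP_j(0)P_{j+1}(0)\bigr)^{-1}$, valid in $[0,+\infty]$ with no boundedness assumption on $J^{-1}$. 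Your preliminary steps (determinacy from \eqref{eq:sum-Pn2}, injectivity, the sign pattern $\sgn P_n(0)=(-1)^n$, the observation that invertibility can only mean $0\notin\sigmaP(J)$) are all correct, and your formula $(J_N^{-1})_{n,n}=-P_n(0)^2\sum_{j=n}^{N-1}\bigl(a_jP_j(0)P_{j+1}(0)\bigr)^{-1}$ checks out.

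The genuine gap is exactly where you locate it, and it is not cosmetic. Your only stated tool for identifying $\lim_N(J_N^{-1})_{n,n}$ with $\langle\bm e_n,J^{-1}\bm e_n\rangle$ is the compression inequality $J_N^{-1}\le P_NJ^{-1}P_N$, which (i) presupposes that $J^{-1}$ is bounded, and (ii) only yields the inequality $\lim_N(J_N^{-1})_{n,n}\le\langle\bm e_n,J^{-1}\bm e_n\rangle$. In the ``if'' direction this is doubly insufficient: you must prove $J^{-1}$ bounded (it need not be, as your own example $a_n\equiv1$, $b_n\equiv2$ shows), and you need the \emph{reverse} inequality to conclude that finiteness of the series forces $\bm e_n\in\Dom(J^{-1/2})$ with the right norm. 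The missing ingredient can be supplied either by the monotone convergence theorem for quadratic forms, or more elementarily by Cauchy--Schwarz: for $\bm x$ supported in the first $N$ coordinates, $|\langle\bm e_n,\bm x\rangle|^2=|\langle J_N^{-1/2}\bm e_n,J_N^{1/2}\bm x\rangle|^2\le (J_N^{-1})_{n,n}\,\langle\bm x,J\bm x\rangle$, which shows that $\bm e_n\in\operatorname{Ran}(J^{1/2})=\Dom(J^{-1/2})$ with $\|J^{-1/2}\bm e_n\|^2\le\lim_N(J_N^{-1})_{n,n}$; combined with your compression bound this gives equality in $[0,+\infty]$. After that you would still need the paper's final step: from $\sum_n\|J^{-1/2}\bm e_n\|^2<\infty$ one first deduces via $\|J^{-1/2}\bm v\|^2\le\bigl(\sum_n\|J^{-1/2}\bm e_n\|^2\bigr)\|\bm v\|^2$ on finitely supported $\bm v$ and closedness that $J^{-1/2}$ is everywhere defined and bounded, hence Hilbert--Schmidt, hence $J^{-1}$ is trace class with the stated trace. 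The paper's spectral-integral argument gets the two-sided identity in one stroke, which is what your truncation scheme has to reconstruct by hand.
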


\begin{proof} 
It follows from \eqref{eq:sum-Pn2} that the Hamburger
moment problem for $J$ is determinate and so $J$ is self-adjoint,
and also that $0 \notin \sigmaP(J)$, i.e. $J^{-1}$ exists.
The positivity of $J$ then implies that the support of the spectral measure of $J$ is contained
in the closed positive real half-line, and the same is true for the orthogonality measure $\mu$.
Then all roots of the polynomials
$P_{n}(x)$, $n \geq 1$, are known to be contained in the interior of the supporting set for the moment functional which means that they must be contained in $(0,+\infty)$
(see e.g. \cite[Theorem I.5.2]{Chihara1978}).

Moreover, recalling that $(a_{n} : n \geq 0)$ is a sequence of positive numbers, we have $\sgn P_{n}(0)=(-1)^{n}$ for any $n \geq 0$, and for any $n \geq 1$ the functions $P_{n}(x)^{2}$ are all decreasing on the
interval $(-\infty, 0]$.

According to equations (28) and (7) in \cite{Stovicek2020}, for
all $z \in \varrho(J)$,
\begin{align*}
	-\Bigg(
	\sum_{j=n}^{\infty}
	\frac{1}{a_{j} P_{j}(z) P_{j+1}(z)}
	\Bigg)
	P_{n}(z)^{2}
	&=
	\langle \bm{e}_{n}, (J - z \Id)^{-1} \bm{e}_{n} \rangle \\
	&=
	\int_\RR 
	\frac{P_{n}(x)^{2}}{x-z} \ud \mu(x).
\end{align*}
Then, since~$\mu$ is supported on the positive
half-line, we can apply twice the monotone convergence theorem and
get
\begin{align*}
	\int_\RR
	\frac{P_{n}(x)^{2}}{x} \ud \mu(x) 
	&=
	\lim_{\varepsilon \to 0+}
	\int_\RR
	\frac{P_{n}(x)^{2}}{x+\varepsilon} \ud \mu(x) \\
	&= 
	-\lim_{\varepsilon \to 0+} 
	\Bigg(
	\sum_{j=n}^{\infty}\frac{1}{a_{j} P_{j}(-\varepsilon) P_{j+1}(-\varepsilon)}
	\Bigg)
	P_{n}(-\varepsilon)^{2} \\
	&=
	-\Bigg(
	\sum_{j=n}^{\infty}
	\frac{1}{a_{j} P_{j}(0) P_{j+1}(0)}
	\Bigg) 
	P_{n}(0)^{2}.
\end{align*}
Thus, for any given $n \in \NN_{0}$, $\bm{e}_{n} \in \Dom(J^{-1/2})$
if and only if
\begin{equation}
	\label{eq:J-sqrt-en}
	\|J^{-1/2} \bm{e}_{n}\|^{2}
	=
 \int\frac{P_{n}(x)^{2}}{x}\,\text{d}\mu(x)
    =
	-\Bigg(
	\sum_{j=n}^{\infty}
	\frac{1}{a_{j} P_{j}(0) P_{j+1}(0)}
	\Bigg)
	P_{n}(0)^{2} < \infty.
\end{equation}

If $J^{-1}$ belongs to the trace class then $J^{-1/2}$ is a Hilbert-Schmidt
operator and, in regard of \eqref{eq:J-sqrt-en}, condition \eqref{eq:sum-trace} holds.

Conversely, if \eqref{eq:sum-trace} is true then $\forall n \in \NN_{0}$,
$\bm{e}_{n} \in \Dom(J^{-1/2})$, and, by a standard estimate, $J^{-1/2}$
is bounded on $\lin \{\bm{e}_{n} : n \geq 0\}$. In detail, $\forall \bm{v} \in \lin \{\bm{e}_{n} : n \geq 0\}$,
\[
	\|J^{-1/2} \bm{v}\|^{2}
	=
	\sum_{n=0}^{\infty}
	|\langle \bm{e}_{n}, J^{-1/2} \bm{v} \rangle|^{2}
	=
	\sum_{n=0}^{\infty}
	|\langle J^{-1/2} \bm{e}_{n},\bm{v} \rangle|^{2}
	\leq
	\sum_{n=0}^{\infty}
	\|J^{-1/2}\bm{e}_{n}\|^{2} 
	\|\bm{v}\|^2.
\]
Consequently, since $J^{-1/2}$ is closed, it is necessarily an everywhere
defined bounded operator. And again by \eqref{eq:sum-trace}, $J^{-1/2}$
is a Hilbert-Schmidt operator, $J^{-1}$ is a trace class operator
and
\[
	\Tr(J^{-1})
	=
	\sum_{n=0}^{\infty}
	\langle \bm{e}_{n}, J^{-1} \bm{e}_{n} \rangle
	=
	\sum_{n=0}^{\infty} 
	\|J^{-1/2}\bm{e}_{n}\|^{2}
	=
	-\sum_{n=0}^{\infty}
	\Bigg(
	\sum_{j=n}^{\infty}
	\frac{1}{a_{j} P_{j}(0) P_{j+1}(0)}
	\Bigg)
	P_{n}(0)^{2}.
\]
This concludes the proof. 
\end{proof}

\subsection{Functions of the second kind, the orthogonality measure} \label{sec:5:1}

Using the same notation as in Section~\ref{sec:1}, here we
fully focus on the case when the Jacobi operator $J$ is positive-definite
and its inverse belongs to the trace class. This also means that $0$
belongs to the resolvent set of $J$. Moreover, then the spectrum
of $J$ is discrete and consists of simple eigenvalues of $J$ only.
The eigenvalues can be enumerated,
\begin{equation*}
    \sigma(J) = \{ \zeta_{n} : n \geq 1 \},
\end{equation*}
so that $(\zeta_{n})$ is a strictly increasing sequence of positive
numbers, and $\lim_{n \to \infty} \zeta_{n} = +\infty$.

\emph{Weyl's function} $w(z)$ associated with $J$ is by definition
the Stieltjes transformation of the measure $\mu$,
\[
	w(z)
	=
	\int_\RR 
	\frac{\text{d}\mu(x)}{x-z}
	=
	\langle \bm{e}_{0}, (J-z \Id)^{-1} \bm{e}_{0} \rangle.
\]
More generally, the so called \emph{functions of the second kind}
are defined as
\begin{equation}
	\label{eq:fce-second-kind}
	w_{k}(z) 
	= 
	\int_\RR 
	\frac{P_{k}(x)}{x-z} \ud \mu(x)
	=
	\langle \bm{e}_{k},(J-z\Id)^{-1} \bm{e}_{0} \rangle, \quad k \geq 0.
\end{equation}
In particular, $w(z) \equiv w_{0}(z)$. In our case, $w_{k}(z)$ are
meromorphic functions on $\CC$, its singular points are located
at the eigenvalues of $J$, and all of them are simple poles.
The measure $\mu$ can be reconstructed from Weyl's function by evaluating
the residues,
\[
	\forall \zeta \in \sigma(J),\ 
	\Res_{z = \zeta} w(z) = -\mu(\{\zeta\}).
\]

The characteristic function of $J$ can be defined naturally as the Fredholm determinant $\det (\Id - z J^{-1})$, which results in the formula
\[
	\frakF(z)
	=
	\prod_{n=1}^{\infty} 
	\bigg( 1-\frac{z}{\zeta_{n}} \bigg),
\]
A more general situation will be considered in Subsection~\ref{sec:2}
and this definition is a particular case of~\eqref{eq:8} for $k=1$ (see also \eqref{eq:62} for the definition in terms of the regularized Fredholm determinant).
$\frakF(z)$ is an entire function whose zeroes are all simple
and coincide with the eigenvalues of $J$. It is known that 
\begin{equation}
	\label{eq:Fchar-sum-wP}
	\frakF(z) 
	= 
	1 - z \sum_{n=0}^{\infty} w_{n}(0) P_{n}(z),
\end{equation}
and the series converges locally uniformly in $z \in \CC$, see \cite[Thm. 2]{Stovicek2020}.

Let
\begin{equation}
	\label{eq:def-W}
	\frakW(z) 
	= 
	w(z) \frakF(z).
\end{equation}
Then, clearly, all singularities of $\frakW(z)$ are removable
which is to say that $\frakW(z)$ is an entire function. For every $\zeta \in \sigma(J)$ we have $\frakF(\zeta)=0$, and thus
\[
	\mu(\{\zeta\}) 
	= 
	-\lim_{z \to \zeta} 
	(z - \zeta) w(z)
	=
	-\lim_{z \to \zeta}
	\frac{(z - \zeta) \frakW(z)}{\frakF(z) - \frakF(\zeta)}.
\]
Hence
\begin{equation} 
	\label{mu-W-F}
	\forall \zeta \in \sigma(J),\ 
	\mu(\{\zeta\}) = -\frac{\frakW(\zeta)}{\frakF'(\zeta)}.
\end{equation}

It may be worthwhile to recall, too, that
\[
	\forall \zeta \in \sigma(J),\ 
	\mu(\{\zeta\}) = 
	\Bigg( 
	\sum_{n=0}^{\infty} P_{n}(\zeta)^2
	\Bigg)^{-1},
\]
see e.g. \cite[Corollary 2.6, p. 45]{Shohat1943} or \cite[Thm. 2.5.3]{Akhiezer1965}.

As an auxiliary step in the course of the proof of Theorem 2 in \cite[p.\,11]{Stovicek2020},
a formula for $w_n(0)P_n(z)$ was derived but not stated there as an independent
proposition (the third displayed equation on the indicated page, not numbered).
Here this result is recalled as a lemma.

\begin{lemma}\label{thm:Fchar-aux} 
For $n \geq 0$,
\[
	w_{n}(0) P_{n}(z)
	=
	\sqrt{\kappa_{n}}
	\langle \bm{e}_{n}, (I-zK)^{-1} \bm{k} \rangle,
\]
where
\begin{equation}
\label{eq:bm-k}
	\kappa_{n} = 
	w_{n}(0) P_{n}(0),\quad
	\bm{k} = 	
 (\sqrt{\kappa_{0}},\sqrt{\kappa_{1}},\sqrt{\kappa_{2}},\ldots)^{T},
\end{equation}
and
\begin{equation} 
	\label{eq:K}
	K_{m,n} 
	= 
	\begin{cases}
		\displaystyle{\frac{P_{n}(0)}{P_{m}(0)}
		\sqrt{\frac{\kappa_{m}}{\kappa_{n}}}
		\big( w_{m}(0) P_{n}(0) - w_{n}(0) P_{m}(0) \big)} & \text{for } m > n\\
	\noalign{\medskip}	
        0 & \text{otherwise}.
	\end{cases}
\end{equation}
Hence, in view of \eqref{eq:Fchar-sum-wP}, the characteristic function of $J$ reads
\begin{equation}
	\label{eq:Fchar}
	\frakF(z) = 1 - z \mathbf{k}^{T} \cdot(\Id-zK)^{-1} \cdot \mathbf{k}.
\end{equation}
\end{lemma}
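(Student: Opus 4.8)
The plan is to strip the square-root weights off $K$ by a diagonal conjugation, reducing the claimed formula to a weight-free identity provable by induction on $n$. Set $D = \diag(\sqrt{\kappa_n}/P_n(0))$; this is a real diagonal matrix because $\kappa_n = w_n(0)P_n(0) = \langle \bm{e}_n, J^{-1}\bm{e}_n\rangle > 0$ and $P_n(0)\neq 0$, both guaranteed by the positivity facts established in the proof of Theorem~\ref{thm:T1}. A direct entrywise check gives $K = D\tilde{K}D^{-1}$, where $\tilde{K}_{m,n} = w_m(0)P_n(0) - w_n(0)P_m(0)$ for $m>n$ and $\tilde{K}_{m,n}=0$ otherwise. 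Since $D^{-1}\bm{k} = (P_n(0))_{n\geq 0} =: \bm{p}$ and $\sqrt{\kappa_n}\,D_n = w_n(0)$, the assertion becomes equivalent to
\[
    P_n(z) = \langle \bm{e}_n, (\Id - z\tilde{K})^{-1}\bm{p}\rangle, \qquad n \geq 0.
\]
As $\tilde{K}$ is strictly lower triangular, the $n$-th component of $(\Id - z\tilde{K})^{-1}\bm{p}$ is a terminating Neumann sum, so for each fixed $n$ this is a genuine finite identity and no convergence question arises at this stage.

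The computation is driven by two standard properties of the second-kind functions. First, $w_n(z) = \langle \bm{e}_n, (J - z\Id)^{-1}\bm{e}_0\rangle$ satisfies the same recurrence \eqref{eq:1} as $P_n(z)$ for $n\geq 1$, supplemented by the inhomogeneous boundary relation $(b_0 - z)w_0(z) + a_0 w_1(z) = 1$. Second, the discrete Wronskian of $P$ and $w$ is therefore constant and equal to $1$; evaluated at $z=0$ this yields $a_{n-1}(w_n(0)P_{n-1}(0) - w_{n-1}(0)P_n(0)) = 1$ for every $n\geq 1$.

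Writing $v_n(z) := \langle \bm{e}_n, (\Id - z\tilde{K})^{-1}\bm{p}\rangle$, the Neumann expansion gives the recursion $v_n = P_n(0) + z\sum_{m=0}^{n-1}(w_n(0)P_m(0) - w_m(0)P_n(0))v_m$, with base case $v_0 = P_0(0) = 1 = P_0(z)$. I would prove $v_n = P_n(z)$ by induction: inserting $v_m = P_m(z)$ for $m<n$ reduces the step to the identity
\[
    P_n(z) - P_n(0) = z\,w_n(0)\sum_{m=0}^{n-1}P_m(0)P_m(z) - z\,P_n(0)\sum_{m=0}^{n-1}w_m(0)P_m(z).
\]
This evaluation is the heart of the argument and the step I expect to be the main obstacle. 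Both partial sums are summed in closed form by the generalized Christoffel--Darboux telescoping $W_m - W_{m-1} = z\,u_m v_m$ valid for solutions $u,v$ of the recurrence at the parameters $0$ and $z$: the choice $u_m = P_m(0)$ gives $z\sum_{m=0}^{n-1}P_m(0)P_m(z) = a_{n-1}(P_{n-1}(0)P_n(z) - P_n(0)P_{n-1}(z))$, while $u_m = w_m(0)$ gives the analogous expression $a_{n-1}(w_{n-1}(0)P_n(z) - w_n(0)P_{n-1}(z)) + 1$, the extra constant $1$ being exactly the contribution of the inhomogeneous boundary relation. Substituting both, the $P_{n-1}(z)$ terms cancel identically, and the coefficient of $P_n(z)$ collapses to $a_{n-1}(w_n(0)P_{n-1}(0) - w_{n-1}(0)P_n(0)) = 1$ by the Wronskian identity, leaving precisely $P_n(z) - P_n(0)$ and closing the induction.

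Finally, for \eqref{eq:Fchar} I would restore the weights and sum over $n$. By the proven formula $\sqrt{\kappa_n}\langle \bm{e}_n, (\Id - zK)^{-1}\bm{k}\rangle = w_n(0)P_n(z)$, and since $\sum_n \kappa_n = \Tr(J^{-1}) < \infty$ shows $\bm{k}\in\ell^2$, the locally uniform convergence of the series in \eqref{eq:Fchar-sum-wP} lets me identify $\sum_{n\geq 0} w_n(0)P_n(z)$ with the bilinear form $\bm{k}^T(\Id - zK)^{-1}\bm{k}$, whence $\frakF(z) = 1 - z\,\bm{k}^T(\Id - zK)^{-1}\bm{k}$. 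The only point requiring care here is that $(\Id - zK)^{-1}$ must be read as a genuine operator inverse on $\ell^2$ rather than merely the triangular Neumann series; this is legitimate for $z$ off the spectrum and is justified following \cite{Stovicek2020}.
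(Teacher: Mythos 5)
Your proposal is correct, and it is worth noting that the paper itself offers no proof of this lemma at all: the result is explicitly ``recalled'' from an unnumbered displayed equation in the proof of Theorem 2 of \cite{Stovicek2020}, so what you have written is a genuine self-contained derivation rather than a variant of an argument in the text. I verified the key steps. The diagonal conjugation $D=\diag\big(\sqrt{\kappa_n}/P_n(0)\big)$ is legitimate because $\kappa_n=w_n(0)P_n(0)=\langle \bm{e}_n,J^{-1}\bm{e}_n\rangle>0$ and $P_n(0)\neq 0$ (both established in the proof of Theorem~\ref{thm:T1} together with \eqref{eq:w0-Po-sum}), and it does reduce the claim to the finite, weight-free identity $P_n(z)=\langle\bm{e}_n,(\Id-z\tilde{K})^{-1}\bm{p}\rangle$. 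In the induction step, the Christoffel--Darboux sum at the parameter pair $(z,0)$ gives $z\sum_{m=0}^{n-1}P_m(0)P_m(z)=a_{n-1}\big(P_{n-1}(0)P_n(z)-P_n(0)P_{n-1}(z)\big)$, the mixed sum with $w_m(0)$ picks up the extra $+1$ precisely from the boundary relation $b_0w_0(0)+a_0w_1(0)=1$, and the constancy of the mixed Wronskian $a_{n-1}\big(w_n(0)P_{n-1}(0)-w_{n-1}(0)P_n(0)\big)=1$ (which can also be read off directly from \eqref{eq:w0-Po-sum} by telescoping) collapses the coefficient of $P_n(z)$ to $1$; the $n=1$ case confirms all signs. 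What your route buys is complete elementarity: each instance of the identity is a finite polynomial computation, with no analysis until the very last step. For \eqref{eq:Fchar} the one point needing care is the one you flag, but it is lighter than you suggest: by the remark following the lemma, $(\Id-zK)^{-1}$ is to be read via the componentwise-terminating geometric series, so $\bm{k}^{T}(\Id-zK)^{-1}\bm{k}$ is by definition the series $\sum_{n\geq 0}w_n(0)P_n(z)$, whose convergence and identification with $(1-\frakF(z))/z$ is exactly \eqref{eq:Fchar-sum-wP}; no operator inverse of $\Id-zK$ on $\ell^2$ needs to be invoked.
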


\begin{remark*} 
The inverse operator $(\Id-zK)^{-1}$ can be evaluated
as a geometric series, and doing so the scalar product $\langle \bm{e}_{n}, (\Id-zK)^{-1} \bm{k} \rangle$
reduces to a finite sum. 
\end{remark*}

Further we recall that the sequence of so-called \emph{orthogonal polynomials
of the second kind}, $(Q_{n} : n \geq 0)$, is defined by the three-term
recurrence relation
\begin{gather*}
	Q_{0}(x) = 0, \quad Q_{1}(x) = \frac{1}{a_0}, \\
	a_{n-1} Q_{n-1}(x) + b_{n} Q_{n}(x) + a_{n} Q_{n+1}(x) = x Q_{n}(x), \quad n \geq 1,
\end{gather*}
(see e.g. \cite[Chapter\,1, \S\,2.1]{Akhiezer1965} or
\cite[Section\,5.4]{Schmudgen2017})(\footnote{Some authors can use a different terminology, these polynomials in the monic form are called the monic numerator polynomials in \cite[Section\,III.4]{Chihara1978}.})
and it holds true that
\[
	Q_{n}(x) = 
	\int_{\RR}
	\frac{P_{n}(x)-P_{n}(u)}{x-u} \ud \mu(u), \quad n \geq 0.
\]

In the beginning of the proof of Theorem~2 in \cite{Stovicek2020}, 
it was shown that
\begin{equation}
	\label{eq:lim-F}
	\frakF(z) 
	= 
	\lim_{n \to \infty}
	\frac{P_{n}(z)}{P_{n}(0)}
\end{equation}
locally uniformly on $\CC$. For a different proof, see~Corollary~\ref{cor:1} below.

In \cite{Berg1994}, a generalization of Markov's Theorem has been derived
which is applicable to any Jacobi matrix $\calJ$ which is the
matrix in the canonical basis of an essentially self-adjoint operator
in $\ell^{2}(\NN_{0})$. Denote by $\calZ_{n}$ the set
of roots of $P_{n}(x)$ and put
\[
	\Lambda := 
	\bigcap_{N=1}^{\infty} 
	\overline{\bigcup_{n=N}^{\infty} \calZ_{n}}.
\]
Then $\supp \mu \subset \Lambda$ and
\begin{equation}
	\label{eq:lim-w}
	\lim_{n \to \infty}
	\frac{Q_{n}(z)}{P_{n}(z)}
	=
	-w(z)
\end{equation}
locally uniformly on $\CC \setminus \Lambda$.

As a consequence of Markov's Theorem it has been observed in \cite[Eq. (28)]{Stovicek2020}
that
\begin{equation}
	\label{eq:w0-Po-sum}
	w_{n}(z) = 
	-P_{n}(z) 
	\sum_{j=n}^{\infty}
	\frac{1}{a_{j} P_{j}(z) P_{j+1}(z)}, \quad n \geq 0,
\end{equation}
holds on $\CC \setminus \Lambda$, and the series converges locally uniformly on this set. Furthermore, combining \eqref{eq:lim-F}
and \eqref{eq:lim-w} with the defining relation \eqref{eq:def-W}
we immediately get
\[
	\frakW(z)
	=
	-\lim_{n \to \infty}
	\frac{Q_{n}(z)}{P_{n}(0)}
\]
locally uniformly on $\CC \setminus \Lambda$.

By the assumptions formulated in the beginning of this section, $J \geq \gamma$
for some $\gamma > 0$ (of course, the best choice is $\gamma = \zeta_{1}$).
         
By $J^{[k]}$, $k\geq1$, let us denote the Jacobi matrix operator obtained
from $J$ by removing the first $k$ rows and columns. Then $J^{[k]}$
obeys the same assumptions as $J$ does, see \cite[Prop. 4]{Stovicek2022}.
All objects associated with $J^{[k]}$ will be distinguished by the
upper index $[k]$. Particularly this concerns the orthonormal polynomial
sequence $\big( P^{[k]}_n(x) : n \geq 0 \big)$ and the functions of the second kind $\big( w_{n}^{[k]}(z) : n \geq 0 \big)$,
see \eqref{eq:fce-second-kind}, and also the characteristic function
$\mathfrak{F}^{[k]}(z)$.
The upper index $[0]$ corresponds to objects which are related to
the original Jacobi operator $J$.

Some attention should be paid to the polynomials of the second kind,
however. As in \cite{Stovicek2020}, we define them so that
\begin{equation}
	\label{eq:poly-second-kind}
	Q_{0}^{[k]}(x) = Q_{1}^{[k]}(x) = \ldots = Q_{k}^{[k]}(x)=0,\ 
	Q_{k+n}^{[k]}(x) = \frac{1}{a_{k}}\, P_{n-1}^{[k+1]}(x)\,\ \text{for}\ n \geq 1.
\end{equation}

Let us recall that there exists a generalization of Markov's theorem
(\ref{eq:lim-w}) due to Van Assche, see \cite[Thm. 2]{VanAssche1991a}, which claims that,
for $k \geq 0$,
\begin{equation}
	\label{eq:Markov-gen}
	w_{k}(z) = 
	-\lim_{n \to \infty}
	\frac{Q_{n}^{[k]}(z)}{P_{n}(z)} \quad
	\text{locally\ uniformly\ in}\ 
	z \in \CC \setminus [\gamma, +\infty).
\end{equation}
It is also useful to recall that 
\begin{equation}
	\label{eq:wk-wPk-Qk}
	\forall k \geq 0,\ \,
	w_{k}(z) = w(z) P_{k}(z) + Q_{k}(z),
\end{equation}
(see \cite[Eq. (13)]{Stovicek2020}).
This relation should be understood as an equality between two meromorphic
functions on $\CC$. Moreover, the functions of the second
kind form a square summable sequence for all $z$ from the resolvent
set, i.e.
\begin{equation}
	\label{eq:w-vect-ell2}
	\forall z\in\varrho(J),\ \,
	\big(w_{0}(z), w_{1}(z), w_{2}(z), \ldots \big)^{T} \in \ell^{2}(\NN_{0}).
\end{equation}
Furthermore, by \cite[Eq. (26)]{Stovicek2020},
\begin{equation}
	\label{eq:Q-m-n}
	\forall m \geq n \geq 0,\ \,
	Q_{m}^{[n]}(x) = Q_{m}(x) P_{n}(x) - P_{m}(x) Q_{n}(x).
\end{equation}

\begin{theorem} 
It holds true that
\begin{equation}
	\label{eq:w1k-wk-over-w}
	\forall k \geq 0,\,\ 
	w_{k}^{[1]}(z) = -\frac{w_{k+1}(z)}{a_{0} w(z)},
\end{equation}
and
\begin{equation}
	\label{eq:wk-Fchar}
	\forall k \geq 0,\,\ 
	w_{k}(z) = \frac{w_{k}(0) \frakF^{[k+1]}(z)}{\frakF(z)}.
\end{equation}
These equations are both understood as equalities between meromorphic
functions on $\CC$. 
\end{theorem}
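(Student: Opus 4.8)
The plan is to handle the two identities with different but closely matched tools: a block (Schur complement) decomposition of the resolvent for \eqref{eq:w1k-wk-over-w}, and Van Assche's generalized Markov theorem \eqref{eq:Markov-gen} together with the limit formula \eqref{eq:lim-F} for \eqref{eq:wk-Fchar}. Both identities will first be established on a cofinite subset of $\CC$ and then upgraded to equalities of meromorphic functions by the identity theorem, since each side is meromorphic on all of $\CC$.

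For \eqref{eq:w1k-wk-over-w} I would write $J-z\Id$ in block form with a scalar top-left corner $b_0-z$, off-diagonal blocks $a_0\bm{e}_0$ and $a_0\bm{e}_0^{T}$, and bottom-right block $J^{[1]}-z\Id$. For $z\in\varrho(J)\cap\varrho(J^{[1]})$ the Schur complement is $S(z)=(b_0-z)-a_0^2\,w^{[1]}(z)$, whose reciprocal is the corner entry of the inverse, $w(z)=\langle\bm{e}_0,(J-z\Id)^{-1}\bm{e}_0\rangle=S(z)^{-1}$. The off-diagonal block of the inverse then gives, for every $k\geq 0$,
\[
	w_{k+1}(z)=\langle\bm{e}_{k+1},(J-z\Id)^{-1}\bm{e}_0\rangle=-a_0\,S(z)^{-1}\,\langle\bm{e}_{k},(J^{[1]}-z\Id)^{-1}\bm{e}_0\rangle=-a_0\,w(z)\,w_k^{[1]}(z).
\]
Dividing by $-a_0\,w(z)$ yields \eqref{eq:w1k-wk-over-w} on $\varrho(J)\cap\varrho(J^{[1]})$, and meromorphy extends it to $\CC$.

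For \eqref{eq:wk-Fchar} I would start from \eqref{eq:Markov-gen} and substitute the explicit shape \eqref{eq:poly-second-kind} of the numerator polynomials, $Q_m^{[k]}(z)=a_k^{-1}P_{m-k-1}^{[k+1]}(z)$ for $m>k$, to obtain $w_k(z)=-a_k^{-1}\lim_{m\to\infty}P_{m-k-1}^{[k+1]}(z)/P_m(z)$ locally uniformly on $\CC\setminus[\gamma,+\infty)$. Inserting normalizations at the origin I would split
\[
	\frac{P_{m-k-1}^{[k+1]}(z)}{P_m(z)}=\frac{P_{m-k-1}^{[k+1]}(z)}{P_{m-k-1}^{[k+1]}(0)}\cdot\frac{P_{m-k-1}^{[k+1]}(0)}{P_m(0)}\cdot\frac{P_m(0)}{P_m(z)}.
\]
By \eqref{eq:lim-F} applied to $J^{[k+1]}$ and to $J$, the first and third factors converge to $\frakF^{[k+1]}(z)$ and $\frakF(z)^{-1}$. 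The middle factor does not depend on $z$, and evaluating \eqref{eq:Markov-gen} at $z=0$ identifies its limit as $-a_k\,w_k(0)$. Multiplying the three limits gives $w_k(z)=w_k(0)\,\frakF^{[k+1]}(z)/\frakF(z)$ off the ray, and meromorphy again promotes this to an identity on $\CC$.

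The only step needing genuine care is the factorization of the single limit defining $w_k(z)$ into a product of three limits. This is legitimate precisely because each factor converges on its own — the outer two by \eqref{eq:lim-F} (using $\frakF(z)\neq 0$ off the ray) and the middle one by \eqref{eq:Markov-gen} at $z=0$ — so that the limit of the product equals the product of the limits. One must also verify that $0$ lies in the common domain of all invoked convergence results, which holds because $J\geq\gamma$ with $\gamma=\zeta_1>0$, so that $0\in\CC\setminus[\gamma,+\infty)$; the concluding meromorphic continuation is then immediate, both sides being meromorphic on $\CC$ and agreeing on an open set.
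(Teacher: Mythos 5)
Your proof of \eqref{eq:wk-Fchar} is essentially the paper's own argument: the paper likewise uses \eqref{eq:poly-second-kind} to write $Q_{n}^{[k]}(z)$ as a multiple of $P_{n-k-1}^{[k+1]}(z)/P_{n-k-1}^{[k+1]}(0)$, splits the quotient in \eqref{eq:Markov-gen} into the same three factors, and evaluates the limits by \eqref{eq:lim-F} (applied to $J^{[k+1]}$ and to $J$) together with \eqref{eq:Markov-gen} at $z=0$; your identification of the middle factor's limit as $-a_k w_k(0)$ matches the paper's $Q_n^{[k]}(0)/P_n(0)\to -w_k(0)$ up to the normalization $Q_n^{[k]}(0)=a_k^{-1}P_{n-k-1}^{[k+1]}(0)$. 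For \eqref{eq:w1k-wk-over-w}, however, you take a genuinely different route. The paper stays on the polynomial side: it applies Van Assche's theorem \eqref{eq:Markov-gen} to the stripped operator $J^{[1]}$, uses $P_{m-1}^{[1]}(z)=a_0 Q_m(z)$, and then evaluates the limit via the identity \eqref{eq:Q-m-n} combined with \eqref{eq:lim-w} and \eqref{eq:wk-wPk-Qk}. You instead work on the operator side, reading both $w(z)=S(z)^{-1}$ and $w_{k+1}(z)=-a_0 S(z)^{-1} w_k^{[1]}(z)$ directly off the block (Schur-complement) form of the resolvent. Your route is shorter and avoids any interchange of limits; its one delicate point is justifying the block-inverse formula for the unbounded $J$, which is unproblematic here because $J$ differs from the block-diagonal operator $b_0\oplus J^{[1]}$ by a bounded rank-two perturbation, so the domains agree, and the invertibility of $J-z\Id$ together with that of $J^{[1]}-z\Id$ forces $S(z)$ to be invertible on $\varrho(J)\cap\varrho(J^{[1]})$. (A cosmetic point: that set is the complement of a countable discrete set rather than a cofinite one, but it is open and connected, which is all the identity theorem needs.) The paper's route, in exchange, keeps the whole proof within the circle of polynomial identities already assembled in Section~\ref{sec:5:1} and yields the locally uniform convergence statements as a by-product; both arguments conclude identically by meromorphic continuation.
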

\begin{proof}
Regarding the Jacobi matrix operator $J^{[1]}$ with the associated sequence of orthogonal polynomials $(P^{[1]}_n(x) : n\geq0)$, denote temporarily the corresponding orthogonal polynomials of the second kind $\tilde{Q}_n(x)$, $n\geq0$. Note that, in accordance with the notation introduced in \eqref{eq:poly-second-kind}, we have $\tilde{Q}_n(x)=Q^{[1]}_{n+1}(x)$.
Letting $k=1$ in \eqref{eq:poly-second-kind} we get
\[
   \tilde{Q}_0(x) = Q^{[1]}_{1}(x) = 0
   \quad\text{and}\quad
   \tilde{Q}_n(x) = Q^{[1]}_{n+1}(x) = \frac{1}{a_1}\,P^{[2]}_{n-1}(x)
   \ \ \text{for}\ n\geq 1
\]
(observe that this also implies correct degrees: $\deg \tilde{Q}_n(x)=n-1$ for $n\geq1$).
Thus we deduce that when replacing $J$ by $J^{[1]}$ the polynomials of the second kind $(Q_n(x) : n\geq0)$ should be replaced by the sequence $(Q^{[1]}_{n+1}(x) : n\geq0)$ and, more generally, the sequence $(Q^{[k]}_n(x) : n\geq0)$ in \eqref{eq:poly-second-kind} should be replaced by $(Q^{[k+1]}_{n+1}(x) : n\geq0)$.

Applying \eqref{eq:Markov-gen} to the Jacobi matrix operator $J^{[1]}$ we obtain
\[
	w_{k}^{[1]}(z)
	=
	-\lim_{m \to \infty} 
	\frac{Q_{m}^{[k+1]}(z)}{P_{m-1}^{[1]}(z)}
	=
	-\lim_{m \to \infty} 
	\frac{Q_{m}^{[k+1]}(z)}{a_{0} Q_{m}(z)}
\]
locally uniformly on $\CC \setminus [\gamma, +\infty)$. Using
\eqref{eq:Q-m-n}, \eqref{eq:lim-w} and \eqref{eq:wk-wPk-Qk} we
get
\begin{align*}
	w_{k}^{[1]}(z)
        &= - \lim_{m \to \infty}
        \frac{Q_m(z)P_{k+1}(z)-P_m(z)Q_{k+1}(z)}{a_0 Q_m(z)} \\
	&= 
	-\frac{1}{a_{0}}
	\bigg( P_{k+1}(z)
	- Q_{k+1}(z) \lim_{m \to \infty} \frac{P_{m}(z)}{Q_{m}(z)}
	\bigg) \\
	&=
	-\frac{1}{a_{0}}
	\bigg(
	P_{k+1}(z) + \frac{Q_{k+1}(z)}{w(z)}
	\bigg) \\
 	&= 
 	-\frac{w_{k+1}(z)}{a_{0} w(z)}.
\end{align*}
Now it suffices to observe that, under our assumptions, all functions
of the second kind are non-zero meromorphic functions on $\CC$.

From \eqref{eq:poly-second-kind} we also infer that
\[
   \frac{Q_{k+n}^{[k]}(z)}{Q_{k+n}^{[k]}(0)}
   = \frac{P_{n-1}^{[k+1]}(z)}{P_{n-1}^{[k+1]}(0)}
\]
for all $k\in\mathbb{N}_0$, $n\in\mathbb{N}$. Consequently,
\[
   Q_{n}^{[k]}(z) = \frac{P_{n-k-1}^{[k+1]}(z)}{P_{n-k-1}^{[k+1]}(0)}\,
   Q_{n}^{[k]}(0)\ \ \ \text{for}\ n \geq k+1.
\]
Therefore, taking into account also \eqref{eq:lim-F},
\begin{align*}
	w_{k}(z)
	&=
	-\lim_{n \to \infty}
	\frac{Q_{n}^{[k]}(z)}{P_{n}(z)} \\
	&=
	-\lim_{n \to \infty}
	\frac{P_{n-k-1}^{[k+1]}(z)}{P_{n-k-1}^{[k+1]}(0)} 
	\cdot
	\frac{Q_{n}^{[k]}(0)}{P_{n}(0)}
	\cdot
	\frac{P_{n}(0)}{P_{n}(z)} \\
	&=
	\frakF^{[k+1]}(z) w_{k}(0) \frac{1}{\frakF(z)}.
\end{align*}
Again, the limit is locally uniform on $\CC \setminus [\gamma, +\infty)$.
Finally notice that the involved characteristic functions are all
entire functions. 
\end{proof}

\begin{remark*} 
One can also prove \eqref{eq:wk-Fchar} by mathematical induction on $k$, with the aid of \eqref{eq:w1k-wk-over-w}. 
\end{remark*}

\begin{corollary} 
It holds true that
\begin{equation} 
	\label{eq:Fchar-PF1-QF}
	w_{k}(0) \mathfrak{F}^{[k+1]}(z)
	=
	w(0) P_{k}(z) \frakF^{[1]}(z)
	+
	Q_{k}(z) \frakF(z) \quad \text{for } k \geq 0
\end{equation}
and all $z \in \CC$. 
\end{corollary}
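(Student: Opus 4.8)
The plan is to obtain \eqref{eq:Fchar-PF1-QF} directly from the two identities just established in the preceding theorem, namely \eqref{eq:wk-Fchar} and \eqref{eq:wk-wPk-Qk}, by clearing the denominator $\frakF(z)$. The point is that although the functions of the second kind $w_k(z)$ and Weyl's function $w(z)$ are only meromorphic, with simple poles at the eigenvalues of $J$, multiplying through by $\frakF(z)$ (whose simple zeros sit exactly at those eigenvalues) should cancel the poles and produce a genuine identity between entire functions, which is then valid on all of $\CC$.

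First I would recast \eqref{eq:wk-Fchar} in the denominator-free form
\[
	w_{k}(z)\,\frakF(z) = w_{k}(0)\,\frakF^{[k+1]}(z),
\]
noting that both sides are entire: the right-hand side is a constant multiple of the entire characteristic function $\frakF^{[k+1]}$, while the left-hand side is entire because each simple pole of $w_{k}$ at an eigenvalue $\zeta \in \sigma(J)$ is annihilated by the corresponding simple zero of $\frakF$. Next I would insert the decomposition \eqref{eq:wk-wPk-Qk}, $w_{k}(z) = w(z) P_{k}(z) + Q_{k}(z)$, into the left-hand side, which yields
\[
	w_{k}(0)\,\frakF^{[k+1]}(z) = w(z)\frakF(z)\,P_{k}(z) + Q_{k}(z)\,\frakF(z).
\]

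It then remains only to identify the factor $w(z)\frakF(z)$. Specializing \eqref{eq:wk-Fchar} to $k=0$ (recall $w = w_{0}$) gives $w(z)\frakF(z) = w(0)\,\frakF^{[1]}(z)$, which is exactly the entire function $\frakW(z)$ of \eqref{eq:def-W} expressed through the truncated characteristic function; substituting this into the previous display produces \eqref{eq:Fchar-PF1-QF} verbatim. Since every function in the resulting identity is entire — $\frakF^{[k+1]}$, $\frakF^{[1]}$, $\frakF$ are entire and $P_{k}$, $Q_{k}$ are polynomials — the equality, initially derived away from $\sigma(J)$, extends to all $z \in \CC$ by the identity theorem. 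I do not expect a real obstacle here: the only step requiring care is the bookkeeping of poles and zeros, that is, checking that clearing $\frakF(z)$ genuinely converts the meromorphic identities \eqref{eq:wk-Fchar} and \eqref{eq:wk-wPk-Qk} into an identity between entire functions holding on all of $\CC$ rather than merely on the resolvent set $\varrho(J)$.
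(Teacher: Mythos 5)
Your proposal is correct and follows essentially the same route as the paper: the paper's own proof simply substitutes \eqref{eq:wk-Fchar} for $w_k(z)$ and $w(z)\equiv w_0(z)$ into \eqref{eq:wk-wPk-Qk} and clears the denominator $\frakF(z)$. Your additional bookkeeping of poles and zeros to justify the extension to all of $\CC$ is sound and merely makes explicit what the paper leaves implicit.
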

\begin{proof} 
In equation~\eqref{eq:wk-wPk-Qk} we substitute for $w_{k}(z)$ and $w(z)\equiv w_{0}(z)$ from~\eqref{eq:wk-Fchar} and get~\eqref{eq:Fchar-PF1-QF}. 
\end{proof}

\begin{corollary} 
For every eigenvalue $\zeta$ of $J$, a corresponding eigenvector can be chosen as the column vector
\begin{equation}
	\label{eq:vect-w0F}
	\big( 
	w(0) \frakF^{[1]}(\zeta),
	w_{1}(0) \frakF^{[2]}(\zeta),
	w_{2}(0) \frakF^{[3]}(\zeta),
	\ldots
	\big)^{T}.
\end{equation}
\end{corollary}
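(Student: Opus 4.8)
The plan is to reduce everything to the standard fact that, for an eigenvalue $\zeta \in \sigma(J)$, the (essentially unique) eigenvector is the column vector $\big(P_0(\zeta), P_1(\zeta), P_2(\zeta), \ldots\big)^T$. Indeed, the three-term recurrence \eqref{eq:1} together with the initial conditions \eqref{eq:2} says precisely that the sequence $\big(P_n(\zeta)\big)_{n\geq0}$ is a formal solution of $\calJ v = \zeta v$; when $\zeta$ is an eigenvalue this sequence lies in $\ell^2(\NN_0)$ (for instance because $\sum_n P_n(\zeta)^2 = \mu(\{\zeta\})^{-1} < \infty$, by the formula recalled in the text), and since the spectrum of $J$ is simple it spans the one-dimensional eigenspace. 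Hence it suffices to show that the vector in \eqref{eq:vect-w0F} is a nonzero scalar multiple of $\big(P_n(\zeta)\big)_{n\geq0}$.

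First I would evaluate the identity \eqref{eq:Fchar-PF1-QF} of the previous corollary at the point $z=\zeta$. Since $\frakF(\zeta)=0$, the last term drops out and one obtains, for every $k\geq 0$,
\[
   w_k(0)\,\frakF^{[k+1]}(\zeta) = w(0)\,\frakF^{[1]}(\zeta)\, P_k(\zeta).
\]
Thus the $k$-th entry of the proposed vector \eqref{eq:vect-w0F} equals $c\,P_k(\zeta)$ with the single constant $c := w(0)\,\frakF^{[1]}(\zeta)$ independent of $k$, so the whole vector is exactly $c\,\big(P_n(\zeta)\big)_{n\geq0}$.

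It then remains to verify that the scalar $c$ is nonzero, for otherwise \eqref{eq:vect-w0F} would be the zero vector and carry no information. For this I would use \eqref{eq:wk-Fchar} with $k=0$ together with the definition \eqref{eq:def-W}, which give $\frakW(z)=w(z)\frakF(z)=w(0)\,\frakF^{[1]}(z)$; hence $c=\frakW(\zeta)$. By \eqref{mu-W-F} we have $\frakW(\zeta)=-\mu(\{\zeta\})\,\frakF'(\zeta)$, where $\mu(\{\zeta\})>0$ because $\zeta$ is a genuine atom of the orthogonality measure, while $\frakF'(\zeta)\neq0$ because the zero of $\frakF$ at $\zeta$ is simple. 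Therefore $c\neq0$, and the vector in \eqref{eq:vect-w0F} is indeed an eigenvector of $J$ for the eigenvalue $\zeta$.

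The argument is short because the genuine work was already done in establishing \eqref{eq:Fchar-PF1-QF}; the only point requiring a little care is the nonvanishing of $c=\frakW(\zeta)$, which I would base on the positivity of the atoms of $\mu$ and the simplicity of the eigenvalues (equivalently, the simplicity of the zeros of $\frakF$). One should also make sure that $\big(P_n(\zeta)\big)_{n\geq0}\in\ell^2(\NN_0)$ is invoked correctly, but this is immediate from the standard formula $\mu(\{\zeta\}) = \big(\sum_{n} P_n(\zeta)^2\big)^{-1}$ already recalled in the text.
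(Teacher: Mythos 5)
Your proposal is correct and follows essentially the same route as the paper: both identify the eigenvector with $\bm{P}(\zeta)$, evaluate \eqref{eq:Fchar-PF1-QF} at $z=\zeta$ to see that \eqref{eq:vect-w0F} is the multiple $w(0)\,\frakF^{[1]}(\zeta)\,\bm{P}(\zeta)$, and then check this scalar is nonzero. The only (immaterial) difference is in that last step: the paper argues via interlacing of the zeros of $\frakF$ and $\frakF^{[1]}$ (and, alternatively, the non-removable pole of the Weyl function), whereas you use \eqref{mu-W-F} together with $\mu(\{\zeta\})>0$ and the simplicity of the zero of $\frakF$ at $\zeta$ — an equally valid justification.
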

\begin{proof} 
For every $z\in\mathbb{C}$, the column vector
\[
	\bm{P}(z)
	=
	\big(
	P_{0}(z), P_{1}(z), P_{2}(z), \dots
	\big)^{T}
\]
is a formal eigenvector of $J$, i.e. a solution of the equation $(\calJ-z \Id) \bm{P}(z) = \bm{0}$
in $\ell(\NN_0)$, and $x \in \RR$ is an eigenvalue of $J$ if and only if $\bm{P}(x)$ has a finite $\ell^{2}$ norm.

Letting $z = \zeta \in \sigma(J)$ in~\eqref{eq:Fchar-PF1-QF} we see
that the column vector~\eqref{eq:vect-w0F} is a multiple of $\bm{P}(\zeta)$.

Moreover, the first entry in \eqref{eq:vect-w0F}, i.e. $w(0) \frakF^{[1]}(\zeta)$,
is necessarily non-zero. Actually, it is known that the eigenvalues
of $J$ and $J^{[1]}$ interlace \cite[Corollary 10.23]{Lukic2022}. Hence
the zeroes of $\frakF(z)$ and $\frakF^{[1]}(z)$ interlace.
But we have $\frakF(\zeta)=0$. This is also seen from \eqref{eq:wk-Fchar}
for $k=0$ since Weyl's function has a singularity at $z = \zeta$ which
is not removable and the zero of $\frakF(z)$ at $z = \zeta$
is simple. 
\end{proof}

\begin{remark*} 
Note that the column vector \eqref{eq:vect-w0F} has a finite $\ell^{2}$ norm for all $\zeta \in \CC$ and not
only for the eigenvalues of $J$. If $\zeta$ is not an eigenvalue
of $J$ then it follows from \eqref{eq:w-vect-ell2} and \eqref{eq:wk-Fchar}.
\end{remark*}

\begin{theorem} 
We have that
\begin{equation}
	\label{eq:W-series}
	\frakW(z) = w(0) + z \sum_{n=1}^{\infty} w_{n}(0) Q_{n}(z),
\end{equation}
and the series converges locally uniformly on $\CC$. 
\end{theorem}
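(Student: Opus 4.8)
The plan is to start from the definition $\frakW(z)=w(z)\frakF(z)$ and expand the characteristic function by means of \eqref{eq:Fchar-sum-wP}, writing
\[
  \frakW(z)=w(z)\Big(1-z\sum_{n=0}^{\infty}w_{n}(0)P_{n}(z)\Big)
  =w(z)-z\sum_{n=0}^{\infty}w_{n}(0)\,w(z)P_{n}(z).
\]
In each summand I would substitute for $w(z)P_{n}(z)$ from the identity \eqref{eq:wk-wPk-Qk}, i.e. $w(z)P_{n}(z)=w_{n}(z)-Q_{n}(z)$, which gives, as an equality of meromorphic functions on $\CC$,
\[
  \frakW(z)=w(z)-z\sum_{n=0}^{\infty}w_{n}(0)\,w_{n}(z)
  +z\sum_{n=0}^{\infty}w_{n}(0)\,Q_{n}(z).
\]
Since $Q_{0}\equiv0$, the last series is precisely the one in \eqref{eq:W-series}, so the whole statement reduces to the single scalar identity $w(z)-z\sum_{n=0}^{\infty}w_{n}(0)w_{n}(z)=w(0)$, together with the requisite convergence assertions.

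The key observation for that identity is that the real numbers $w_{n}(0)=\langle\bm{e}_{n},J^{-1}\bm{e}_{0}\rangle$ are exactly the components of the vector $J^{-1}\bm{e}_{0}$, while $w_{n}(z)=\langle\bm{e}_{n},(J-z\Id)^{-1}\bm{e}_{0}\rangle$ are the components of $(J-z\Id)^{-1}\bm{e}_{0}$. Both vectors belong to $\ell^{2}(\NN_{0})$ — the second by \eqref{eq:w-vect-ell2} for $z\in\varrho(J)$ — so by Cauchy–Schwarz the series converges and, using that the $w_{n}(0)$ are real and that $J^{-1}$ is self-adjoint,
\[
  \sum_{n=0}^{\infty}w_{n}(0)w_{n}(z)
  =\langle J^{-1}\bm{e}_{0},(J-z\Id)^{-1}\bm{e}_{0}\rangle
  =\langle\bm{e}_{0},J^{-1}(J-z\Id)^{-1}\bm{e}_{0}\rangle.
\]
The resolvent identity $J^{-1}(J-z\Id)^{-1}=z^{-1}\big((J-z\Id)^{-1}-J^{-1}\big)$ then yields $\sum_{n}w_{n}(0)w_{n}(z)=z^{-1}\big(w(z)-w(0)\big)$, which is the desired identity after multiplication by $z$. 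Local uniform convergence of this series on $\varrho(J)$ follows as well, since on a compact $K\subset\varrho(J)$ the tail is bounded via Cauchy–Schwarz by $\big(\sum_{n>N}w_{n}(0)^{2}\big)^{1/2}\cdot\sup_{z\in K}\|(J-z\Id)^{-1}\bm{e}_{0}\|$, and the resolvent is locally bounded on $\varrho(J)$.

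Combining this with the locally uniform convergence of $\sum_{n}w_{n}(0)P_{n}(z)$ on $\CC$ granted by \eqref{eq:Fchar-sum-wP} (and with the local boundedness of $w$ on $\varrho(J)$), I obtain that the partial sums $f_{N}(z):=w(0)+z\sum_{n=1}^{N}w_{n}(0)Q_{n}(z)$ converge to $\frakW(z)$ locally uniformly on $\varrho(J)=\CC\setminus\sigma(J)$. The hard part will be upgrading this to convergence on the whole plane despite the poles that $w$ and the $w_{n}$ carry at the eigenvalues. For this I would use that each $f_{N}$ is actually a polynomial (because $w_{n}(0)\in\RR$ and $Q_{n}$ is a polynomial) and that $\frakW$ is entire: since $\sigma(J)\subset(0,\infty)$ is discrete, any compact set can be enclosed in a disc $D(0,R)$ with $R\notin\sigma(J)$, whose boundary circle then lies in $\varrho(J)$; there $(f_{N})$ is uniformly Cauchy, and the maximum modulus principle applied to the differences $f_{N}-f_{M}$ propagates this to the closed disc. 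The uniform limit agrees with $\frakW$ on the dense set $\overline{D(0,R)}\setminus\sigma(J)$, hence everywhere by continuity, establishing local uniform convergence on $\CC$ and completing the proof.
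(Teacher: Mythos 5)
Your proof is correct, but it follows a genuinely different route from the paper's. The paper proves \eqref{eq:W-series} by passing to the stripped matrix $J^{[1]}$: using \eqref{eq:w1k-wk-over-w} and \eqref{eq:poly-second-kind} it rewrites $w_{n}(0)Q_{n}(z)=-w(0)\,w_{n-1}^{[1]}(0)P_{n-1}^{[1]}(z)$ for $n\geq 1$, so that the series in \eqref{eq:W-series} becomes $w(0)\bigl(1-z\sum_{n\geq 0}w_{n}^{[1]}(0)P_{n}^{[1]}(z)\bigr)=w(0)\frakF^{[1]}(z)$; the locally uniform convergence on all of $\CC$ is then inherited directly from \eqref{eq:Fchar-sum-wP} applied to $J^{[1]}$, and the identification with $\frakW(z)$ comes from \eqref{eq:wk-Fchar} at $k=0$. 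You instead stay entirely with $J$: you expand $\frakF$ via \eqref{eq:Fchar-sum-wP}, split $w(z)P_{n}(z)=w_{n}(z)-Q_{n}(z)$ using \eqref{eq:wk-wPk-Qk}, and collapse $\sum_{n}w_{n}(0)w_{n}(z)$ by Parseval and the resolvent identity to $z^{-1}(w(z)-w(0))$ --- a clean and correct computation (the reality of $w_{n}(0)$ and \eqref{eq:w-vect-ell2} justify the Cauchy--Schwarz and Parseval steps). The price of avoiding $J^{[1]}$ is that your convergence is initially only locally uniform on $\varrho(J)$, and you need the extra step of enclosing a given compact set in a disc whose boundary circle avoids $\sigma(J)\subset(0,\infty)$ and applying the maximum modulus principle to the polynomial partial sums $f_{N}-f_{M}$ to propagate uniform Cauchyness into the disc; this step is valid as written (the circle meets the positive half-line only at a point chosen off the discrete spectrum, and the identification of the limit with the entire function $\frakW$ on the dense set $\overline{D(0,R)}\setminus\sigma(J)$ is legitimate). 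In short, the paper's argument is shorter because the hard convergence statement is delegated to the already-proved expansion \eqref{eq:Fchar-sum-wP} for $J^{[1]}$, whereas yours is more self-contained, needing neither the stripped matrix nor \eqref{eq:w1k-wk-over-w}, at the cost of an explicit analytic-continuation upgrade across the spectrum.
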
                                     
\begin{proof} 
By Theorem~2 in \cite{Stovicek2020}, for every
Jacobi operator~$J$ fulfilling our assumptions it holds true
that the series $\sum_{n=0}^{\infty} w_{n}(0) P_{n}(z)$ converges locally
uniformly on $\CC$ and the respective characteristic function
$\frakF(z)$ satisfies equation \eqref{eq:Fchar-sum-wP}. In
particular, all these claims are true for $J^{[1]}$ as well.

According to \eqref{eq:w1k-wk-over-w} and \eqref{eq:poly-second-kind},
\[
	w_{n}(0) Q_{n}(z)
	=
	-w(0) w_{n-1}^{[1]}(0) P_{n-1}^{[1]}(z) \quad \text{for } n \geq 1.
\]
Hence the series $\sum_{n=1}^{\infty} w_{n}(0) Q_{n}(z)$ converges
locally uniformly on $\CC$ and
\begin{align*}
	w(0) + z \sum_{n=1}^{\infty} w_{n}(0) Q_{n}(z) 
	&=
	w(0)
	\Bigg(
	1 - 
	z \sum_{n=0}^{\infty} w_{n}^{[1]}(0) P_{n}^{[1]}(z)
	\Bigg) \\
	&= 
	w(0)\mathfrak{F}^{[1]}(z) \\
 	&= 
 	w(z) \frakF(z) \\
 	&= 
 	\frakW(z).
\end{align*}
In the last two equations we have used \eqref{eq:wk-Fchar}, with
$k = 0$, and the very definition of $\frakW(z)$ (see \eqref{eq:def-W}). 
\end{proof}

\begin{remark} 
Finally a short remark concerning Green's function.
It is valid for arbitrary Jacobi matrix $\calJ$. Recall that
Green's function of $\calJ$, called $\calG$, is the
unique semi-infinite strictly lower triangular matrix which is a right
inverse of $\calJ$, i.e. $\calJ \calG = \Id$, see \cite{Geronimo1988}.
Then, for all $z \in \CC$,
\[
	\bm{P}(z) = (I - z \calG)^{-1} \bm{P}(0)
\]
(the inverse $(\Id-z\mathcal{G})^{-1}$ can be defined by the geometric
series) \cite[Eq. (30)]{Stovicek2020}. Similarly,
\[
	\bm{Q}(z) = \big(Q_{0}(z), Q_{1}(z), Q_{2}(z), \ldots \big)^{T}
\]
is a formal solution of the equation $(\calJ - z \Id) \bm{Q}(z) = \bm{e}_{0}$
in $\CC^{\infty}$. The solution is unique with the additional
assumption that the first entry is zero. It holds again true that
\[
	\bm{Q}(z) = (\Id - z \calG)^{-1} \bm{Q}(0).
\]
\end{remark}

\section{Jacobi matrices with trace resolvent related to generators of Birth-Death processes} \label{sec:4}

\subsection{A criterion for the trace class in the case of generators of Birth-Death processes}

In this section we specialize the general results derived in the preceding section to Jacobi matrices corresponding to generators of Birth-Death processes, namely, with Jacobi parameters as given in \eqref{eq:34}.
This way we extend substantially the class of Jacobi operators studied earlier in \cite{Stovicek2022}
which is briefly recalled in Example~\ref{rem:jat22}.

Let us define 
\begin{equation} 
	\label{eq:35}
	\pi_n =
	\begin{cases}
		\dfrac{\lambda_0 \lambda_1 \ldots \lambda_{n-1}}{\mu_1 \mu_2 \ldots \mu_n} & n \geq 1, \\
		\hfil 1 & n=0.
	\end{cases}
\end{equation}

Occasionally, apart of orthonormal polynomials we shall also need a monic version of $(P_n : n \in \NN_0)$. They satisfy
\begin{equation} 
	\label{eq:3}
	p_n(z) = a_0 a_1 \ldots a_{n-1} P_n(z)
\end{equation}
and
\begin{equation} 
	\label{eq:4}
	z p_n(z) = a_{n-1}^2 p_{n-1}(z) + b_n p_n(z) + p_{n+1}(z), \quad n \geq 1
\end{equation}
with the initial conditions
\begin{equation} 
	\label{eq:5}
	p_0(z) \equiv 1, \quad p_1(z) = z-b_0.
\end{equation}

The following result can be derived from \cite[formula (2.4)]{Karlin1957}. Our proof seems to be new.
\begin{lemma} \label{lem:1}
Consider Jacobi parameters \eqref{eq:34} with $\mu_0 \geq 0$. Then for any $n \geq 0$
\begin{align} 
	\label{eq:36}
	p_n(0) 
	&= 
	(-1)^n \lambda_0 \ldots \lambda_{n-1} 
	\bigg( 1 + \mu_0 \sum_{k=0}^{n-1} \frac{1}{\lambda_k \pi_k} \bigg) \\
	\label{eq:37}
	&= 
	(-1)^n \lambda_0 \ldots \lambda_{n-1} 
	\bigg( 1 + \sum_{k=0}^{n-1} \frac{\mu_0 \ldots \mu_k}{\lambda_0 \ldots \lambda_k} \bigg),
\end{align}
and consequently,
\begin{equation} \label{eq:38}
	P_n(0) = 
	(-1)^n \sqrt{\pi_n}
	\bigg( 1 + \mu_0 \sum_{k=0}^{n-1} \frac{1}{\lambda_k \pi_k} \bigg).
\end{equation}
\end{lemma}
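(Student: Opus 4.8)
The plan is to work entirely with the monic polynomials $p_n$ and to read off a recurrence for the single number sequence $u_n := p_n(0)$. Setting $z=0$ in \eqref{eq:4} and substituting $a_{n-1}^2 = \lambda_{n-1}\mu_n$ and $b_n = \lambda_n+\mu_n$ from \eqref{eq:34} turns the three-term relation into the homogeneous second-order recurrence
\[
   u_{n+1} = -(\lambda_n+\mu_n)\,u_n - \lambda_{n-1}\mu_n\,u_{n-1}, \qquad n\geq 1,
\]
with the initial data $u_0 = 1$ and $u_1 = p_1(0) = -b_0 = -(\lambda_0+\mu_0)$ coming from \eqref{eq:5}. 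Before solving it I would remark that the two claimed formulas \eqref{eq:36} and \eqref{eq:37} are one and the same identity: by the definition \eqref{eq:35} of $\pi_k$ one has $\mu_0/(\lambda_k\pi_k) = \mu_0\mu_1\cdots\mu_k/(\lambda_0\cdots\lambda_k)$, so it suffices to establish \eqref{eq:37} and then rewrite it.

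The heart of the argument is to factor this second-order recurrence into two first-order ones, which is precisely what the Birth--Death form of the parameters permits. First I would introduce the auxiliary sequence $v_n := u_{n+1} + \lambda_n u_n$ and verify, by a direct substitution of the recurrence for $u_n$, that $v_n = -\mu_n v_{n-1}$. Combined with $v_0 = u_1 + \lambda_0 u_0 = -\mu_0$, this first-order relation integrates at once to $v_n = (-1)^{n+1}\mu_0\mu_1\cdots\mu_n$. Consequently $u_n$ itself satisfies the \emph{first-order} inhomogeneous recurrence
\[
   u_{n+1} + \lambda_n u_n = (-1)^{n+1}\mu_0\mu_1\cdots\mu_n .
\]

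To solve this I would remove the homogeneous part by the substitution $u_n = (-1)^n\,\lambda_0\cdots\lambda_{n-1}\,w_n$ (empty products equal to $1$, so that $w_0 = 1$). A short computation collapses the recurrence to the telescoping relation $w_{n+1} - w_n = \mu_0\cdots\mu_n/(\lambda_0\cdots\lambda_n)$, whence $w_n = 1 + \sum_{k=0}^{n-1}\mu_0\cdots\mu_k/(\lambda_0\cdots\lambda_k)$; multiplying back gives \eqref{eq:37}, and the rewriting noted above gives \eqref{eq:36}. Finally, for \eqref{eq:38} I would divide $p_n(0)$ by $a_0\cdots a_{n-1} = \sqrt{\lambda_0\cdots\lambda_{n-1}\,\mu_1\cdots\mu_n}$ using \eqref{eq:3}, and observe that $\lambda_0\cdots\lambda_{n-1}/(a_0\cdots a_{n-1}) = \sqrt{\pi_n}$, which turns the leading factor into $(-1)^n\sqrt{\pi_n}$ and leaves the parenthesis in the $\pi$-form of \eqref{eq:36} untouched.

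The one genuinely non-mechanical step is spotting the right linear combination $v_n = u_{n+1} + \lambda_n u_n$ that decouples the recurrence; everything after that—solving a first-order recurrence, telescoping, and the cosmetic renormalization from $p_n$ to $P_n$—is routine. An alternative, should the combination be hard to motivate, is simply to verify \eqref{eq:37} by induction on $n$, checking the two base cases and plugging the formula into the second-order recurrence; but the factorization approach is cleaner and explains \emph{why} the answer has its product-plus-sum shape.
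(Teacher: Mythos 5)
Your proof is correct. In substance it rests on the same algebraic fact as the paper's proof, but it packages that fact quite differently. The paper writes the recurrence in $2\times2$ transfer-matrix form and proves by induction the vector identity \eqref{eq:43}; the second component of that identity, after adding $\lambda_n$ times the first, is precisely your statement $p_{n+1}(0)+\lambda_n p_n(0)=(-1)^{n+1}\mu_0\cdots\mu_n$, so your auxiliary sequence $v_n$ and the paper's moving frame of vectors $(1,-\lambda_n)^{T}$, $(0,1)^{T}$ encode one and the same factorization of the second-order recurrence into first-order pieces. The difference is methodological: the paper must first guess the closed form \eqref{eq:43} and then verify it by induction on the pair $(p_n(0),p_{n+1}(0))$, whereas you derive the answer constructively, first integrating $v_n=-\mu_n v_{n-1}$ from $v_0=-\mu_0$ and then solving the resulting inhomogeneous first-order recurrence for $u_n$ by the substitution $u_n=(-1)^n\lambda_0\cdots\lambda_{n-1}w_n$ and telescoping. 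Your route explains where the product-plus-sum shape of \eqref{eq:37} comes from without knowing it in advance; the paper's route is a single self-contained induction. All the supporting computations you invoke check out: $v_n=-\mu_n v_{n-1}$ for $n\geq1$, the base value $v_0=-\mu_0$, the identity $\mu_0/(\lambda_k\pi_k)=\mu_0\mu_1\cdots\mu_k/(\lambda_0\cdots\lambda_k)$ reconciling \eqref{eq:36} with \eqref{eq:37}, and $\lambda_0\cdots\lambda_{n-1}/(a_0\cdots a_{n-1})=\sqrt{\pi_n}$ for the passage to \eqref{eq:38}.
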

\begin{proof}
By \eqref{eq:4} and \eqref{eq:5} we have
\begin{equation} \label{eq:39}
	z p_n(z) = p_{n+1}(z) + (\mu_n + \lambda_n) p_n(z) + \lambda_{n-1} \mu_n p_{n-1}(z), \quad n \geq 1
\end{equation}
with the initial conditions: $p_0(z) \equiv 1, p_1(z) = z - \lambda_0 - \mu_0$. We shall only prove that \eqref{eq:37} holds, because then \eqref{eq:36} and \eqref{eq:38} are simple consequences of \eqref{eq:34}, \eqref{eq:35} and \eqref{eq:3}.

Notice that the equation \eqref{eq:39} can be rewritten in the form
\begin{equation} \label{eq:40}
	\begin{pmatrix}
		p_{n}(z) \\
		p_{n+1}(z)
	\end{pmatrix}
	=
	B_n(z)
	\begin{pmatrix}
		p_{n-1}(z) \\
		p_n(z)
	\end{pmatrix}, \quad n \geq 1,
\end{equation}
where
\[
	B_n(z) =
	\begin{pmatrix}
		0 & 1 \\
		-\lambda_{n-1} \mu_n & z - \lambda_n-\mu_n
	\end{pmatrix}.
\]
Notice that for any $n \geq 1$,
\begin{align} 
	\label{eq:41}
	B_n(0) 
	\begin{pmatrix}
		1 \\
		-\lambda_{n-1}
	\end{pmatrix}
	&=
	\begin{pmatrix}
		0 & 1 \\
		-\lambda_{n-1} \mu_n & -\lambda_n - \mu_n
	\end{pmatrix}
	\begin{pmatrix}
		1 \\
		-\lambda_{n-1}
	\end{pmatrix} 
	=
	-\lambda_{n-1}
	\begin{pmatrix}
		1 \\
		-\lambda_n
	\end{pmatrix}, \\
	\label{eq:42}
	B_n(0) 
	\begin{pmatrix}
		0 \\
		1
	\end{pmatrix}
	&=
	\begin{pmatrix}
		1 \\
		-\lambda_n - \mu_n
	\end{pmatrix}
	=
	\begin{pmatrix}
		1 \\
		-\lambda_n
	\end{pmatrix}
	-\mu_n
	\begin{pmatrix}
		0 \\
		1
	\end{pmatrix}.
\end{align}
We claim that for any $n \geq 0$,
\begin{equation}
	\label{eq:43}
	\begin{pmatrix}
		p_n(0) \\
		p_{n+1}(0)
	\end{pmatrix}
	=
	(-1)^n \lambda_0 \ldots \lambda_{n-1}
	\bigg( 
	1 + 
	\sum_{k=0}^{n-1} \frac{\mu_0 \ldots \mu_k}{\lambda_0 \ldots \lambda_k} 
	\bigg)
	\begin{pmatrix}
		1 \\
		-\lambda_{n}
	\end{pmatrix}
	+
	(-1)^{n+1} \mu_0 \ldots \mu_n
	\begin{pmatrix}
		0 \\
		1
	\end{pmatrix}.
\end{equation}
This claim holds true for $n=0$. Assume that \eqref{eq:43} holds for $n=m$, we shall prove that then \eqref{eq:43} holds for $n=m+1$.
We compute using \eqref{eq:40} and \eqref{eq:43}, and then \eqref{eq:41} and \eqref{eq:42}
\begin{align*}
	\begin{pmatrix}
		p_{m+1}(0) \\
		p_{m+2}(0)
	\end{pmatrix}
	&=
	B_{m+1}(0)
	\begin{pmatrix}
		p_m(0) \\
		p_{m+1}(0)
	\end{pmatrix} \\
	&=
	(-1)^m \lambda_0 \ldots \lambda_{m-1}
	\bigg( 
	1 + 
	\sum_{k=0}^{m-1} \frac{\mu_0 \ldots \mu_k}{\lambda_0 \ldots \lambda_k} 
	\bigg)
	B_{m+1}(0)
	\begin{pmatrix}
		1 \\
		-\lambda_{m}
	\end{pmatrix}
	+ 
	(-1)^{m+1} \mu_0 \ldots \mu_m
	B_{m+1}(0)
	\begin{pmatrix}
		0 \\
		1
	\end{pmatrix} \\
	&=
	(-1)^{m+1} \lambda_0 \ldots \lambda_{m-1} \lambda_m
	\bigg( 
	1 + 
	\sum_{k=0}^{m} \frac{\mu_0 \ldots \mu_k}{\lambda_0 \ldots \lambda_k} 
	\bigg)
	\begin{pmatrix}
		1 \\
		-\lambda_{m+1}
	\end{pmatrix}
	+ (-1)^{m+2} \mu_0 \ldots \mu_{m+1} 
	\begin{pmatrix}
		0 \\
		1
	\end{pmatrix},
\end{align*}
which shows that then \eqref{eq:43} holds true also for $n=m+1$. Thus, by induction the proof of \eqref{eq:43} is complete.
\end{proof}

\begin{lemma}\label{thm:L1} 
Assume, in addition to the assumptions
of Lemma~\ref{lem:1}, that 
\begin{equation} \label{eq:44}
	\sum_{n=0}^\infty 
	\pi_n(1 + \fraks_n)^2 = \infty, 
	\quad \text{where} \quad
	\fraks_n := 
	\mu_0 
	\sum_{k=0}^{n-1} 
	\frac{1}{\lambda_k \pi_k}.
\end{equation}
Then $J$ is self-adjoint and positive-semidefinite.
Set
\[ \fraks_\infty := \lim_{n \to \infty} \fraks_n.\]
Then, for $n \in \NN_{0}$, we have
\begin{equation}
	\label{eq:s-infty-finite}
	-\Bigg(
	\sum_{j=n}^{\infty}
	\frac{1}{a_{j} P_{j}(0) P_{j+1}(0)}
	\Bigg)
	P_{n}(0)
	=
	(-1)^{n}
	\frac{\sqrt{\pi_{n}}}{1 + \fraks_{\infty}} 
	\sum_{j=n}^{\infty}
	\frac{1}{\lambda_{j} \pi_{j}}
\end{equation}
if $\fraks_{\infty} < \infty$, and
\begin{equation}
	\label{eq:s-infty-inf}
	-\Bigg(
	\sum_{j=n}^{\infty} 
	\frac{1}{a_{j}P_{j}(0)P_{j+1}(0)}
	\Bigg)
	P_{n}(0)
	=
	(-1)^{n}
	\frac{\sqrt{\pi_{n}}}{\mu_{0}}
\end{equation}
 if $\fraks_{\infty} = \infty$.
\end{lemma}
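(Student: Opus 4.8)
The plan is to settle the two structural claims first and then reduce both displayed identities to a single algebraic computation. For positive-semidefiniteness I would complete the square on the quadratic form: for finitely supported $x$, using $a_n=\sqrt{\lambda_n\mu_{n+1}}$ and $b_n=\lambda_n+\mu_n$ one checks the rearrangement
\[
  \langle x,\calJ x\rangle
  = \mu_0\abs{x_0}^2 + \sum_{n=0}^{\infty}\bigl|\sqrt{\lambda_n}\,x_n+\sqrt{\mu_{n+1}}\,x_{n+1}\bigr|^2 \ge 0,
\]
which is non-negative since $\mu_0\ge 0$. For self-adjointness I would observe that by \eqref{eq:38} we have $P_n(0)^2=\pi_n(1+\fraks_n)^2$, so the hypothesis \eqref{eq:44} is literally condition \eqref{eq:sum-Pn2}. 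Together with positive-semidefiniteness, Theorem~\ref{thm:T1} then applies and yields that $J$ is self-adjoint and invertible.

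For the two identities the main step is to evaluate the summand in closed form. Inserting \eqref{eq:34}, \eqref{eq:35} and \eqref{eq:38}, and using $\pi_{j+1}/\pi_j=\lambda_j/\mu_{j+1}$, which gives $a_j\sqrt{\pi_j\pi_{j+1}}=\lambda_j\pi_j$, I would obtain
\[
  \frac{1}{a_j P_j(0)P_{j+1}(0)}
  = \frac{-1}{\lambda_j\pi_j\,(1+\fraks_j)(1+\fraks_{j+1})},
\]
where the overall sign comes from $P_j(0)P_{j+1}(0)=(-1)^{2j+1}\sqrt{\pi_j\pi_{j+1}}(1+\fraks_j)(1+\fraks_{j+1})$.

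Next I would telescope. When $\mu_0>0$ the increment of $\fraks$ satisfies $\fraks_{j+1}-\fraks_j=\mu_0/(\lambda_j\pi_j)$, so the summand equals $-\mu_0^{-1}\bigl(\tfrac{1}{1+\fraks_j}-\tfrac{1}{1+\fraks_{j+1}}\bigr)$ and
\[
  \sum_{j=n}^{\infty}\frac{1}{a_j P_j(0)P_{j+1}(0)}
  = -\frac{1}{\mu_0}\left(\frac{1}{1+\fraks_n}-\frac{1}{1+\fraks_\infty}\right),
\]
with the convention $1/(1+\fraks_\infty)=0$ when $\fraks_\infty=\infty$; this both proves convergence of the series and evaluates it. Multiplying by $P_n(0)=(-1)^n\sqrt{\pi_n}(1+\fraks_n)$ and using $\fraks_\infty-\fraks_n=\mu_0\sum_{j\ge n}1/(\lambda_j\pi_j)$ yields \eqref{eq:s-infty-finite} in the case $\fraks_\infty<\infty$, while the branch $1/(1+\fraks_\infty)=0$ gives \eqref{eq:s-infty-inf}.

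The one point needing separate care is $\mu_0=0$, where the telescoping identity would divide by $\mu_0$. But then $\fraks_n\equiv 0$, so automatically $\fraks_\infty=0<\infty$, only \eqref{eq:s-infty-finite} is claimed, and the summand reduces to $-1/(\lambda_j\pi_j)$; summing and multiplying by $P_n(0)=(-1)^n\sqrt{\pi_n}$ recovers \eqref{eq:s-infty-finite} directly. I expect this degenerate case, together with keeping track of when the tail $\sum_{j\ge n}1/(\lambda_j\pi_j)$ is finite (precisely when $\fraks_\infty<\infty$ in the case $\mu_0>0$) and the correct reading of the identities as equalities in $[0,+\infty]$, to be the only delicate bookkeeping; the remainder is a direct substitution and a telescoping sum.
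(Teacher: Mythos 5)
Your proposal is correct and follows essentially the same route as the paper: identify the summand as $-1/\bigl(\lambda_j\pi_j(1+\fraks_j)(1+\fraks_{j+1})\bigr)$ via \eqref{eq:38}, telescope using $\fraks_{j+1}-\fraks_j=\mu_0/(\lambda_j\pi_j)$ when $\mu_0>0$, treat $\mu_0=0$ separately, and get self-adjointness by recognizing \eqref{eq:44} as \eqref{eq:sum-Pn2}. The only (minor) divergence is that you prove positive-semidefiniteness by an explicit completion of the square on the quadratic form, where the paper simply cites Karlin--McGregor; your version is self-contained and the factorization checks out.
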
    
\begin{proof} 
One can employ formula~\eqref{eq:38}. Likewise in the proof of Theorem \ref{thm:T1},
\eqref{eq:44} implies that $J$ is self-adjoint.
Moreover, a simple consequence of \cite[Thm. 1]{Karlin1957} is that $J$ is positive-semidefinite.

If $\mu_{0} = 0$ then $P_n(0)=(-1)^n\sqrt{\pi_n}$ and
$\fraks_{n} = 0$ for any $n \geq 0$. Thus, $\fraks_{\infty} = 0$
and \eqref{eq:s-infty-finite} is immediately seen to hold. Note that
\[
	a_{j} = \lambda_{j} \sqrt{\frac{\pi_{j}}{\pi_{j+1}}}\,,
\]
For $\mu_{0} > 0$ we have
\[
	-\frac{1}{a_{j} P_{j}(0) P_{j+1}(0)}
	=
	\frac{1}{\lambda_{j} \pi_{j} (1 + \fraks_{j})(1 + \fraks_{j+1})}
	=
	\frac{1}{\mu_{0}}
	\bigg(
	\frac{1}{1 + \fraks_{j}} - \frac{1}{1 + \fraks_{j+1}}
	\bigg).
\]
Whence
\begin{align*}
	-\Bigg(
	\sum_{j=n}^{\infty}
	\frac{1}{a_{j} P_{j}(0) P_{j+1}(0)}
	\Bigg)
	P_{n}(0) 
	&=
	(-1)^{n}
	\frac{\sqrt{\pi_{n}} (1 + \fraks_{n})}{\mu_{0}}
	\lim_{N \to \infty}
	\bigg(
	\frac{1}{1 + \fraks_{n}} - \frac{1}{1+\fraks_{N}}
	\bigg) \\
	&= 
	(-1)^{n}
	\frac{\sqrt{\pi_{n}}}{\mu_{0}}
	\bigg(
	1 - \frac{1 + \fraks_{n}}{1 + \fraks_{\infty}}
	\bigg).
\end{align*}
Depending on the value of $\fraks_{\infty}$ we get either \eqref{eq:s-infty-finite}
or \eqref{eq:s-infty-inf}. 
\end{proof}
\begin{remark*}
If $\mu_0 > 0$, then \eqref{eq:44} is in fact \emph{necessary} for self-adjointness of $J$, see \cite[formula (3.83)]{Iglesia2021}.
\end{remark*}

A straightforward combination of Theorem~\ref{thm:T1}, Lemma~\ref{lem:1}
and Lemma~\ref{thm:L1} yields the following theorem.

\begin{theorem} \label{thm:2}
Let $J$ be the Jacobi operator corresponding to \eqref{eq:34}. If
\[
	\sum_{n=0}^{\infty} \pi_{n}(1 + \fraks_{n})^{2} = \infty,
\]
then $J$ is a positive-semidefinite self-adjoint operator and $0 \notin \sigmaP(J)$.
Moreover, if $\fraks_{\infty} < \infty$ then
\begin{align} 
	\nonumber
    \Tr(J^{-1})
    &=
    \frac{1}{1 + \fraks_{\infty}}
    \sum_{n=0}^{\infty} 
    \pi_{n}(1 + \fraks_{n})
    \sum_{j=n}^{\infty}
    \frac{1}{\lambda_{j} \pi_{j}} \\
    \label{eq:45a}
    &=
    \frac{1}{1 + \fraks_{\infty}}
    \sum_{j=0}^{\infty}
    \frac{1}{\lambda_{j} \pi_{j}}
    \sum_{n=0}^{j} 
    \pi_{n} (1 + \fraks_{n}),
\end{align}
and if $\fraks_{\infty} = \infty$ then
\begin{equation} \label{eq:45b}
    \Tr(J^{-1})
    =
    \frac{1}{\mu_{0}}
    \sum_{n=0}^{\infty} \pi_{n}(1 + \fraks_{n}).
\end{equation}
\end{theorem}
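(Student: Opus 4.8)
The plan is simply to chain together the three preceding results and then carry out one positive-series rearrangement. First I would note that by formula~\eqref{eq:38} of Lemma~\ref{lem:1} one has $P_n(0)^2 = \pi_n(1+\fraks_n)^2$, so the hypothesis $\sum_{n=0}^\infty \pi_n(1+\fraks_n)^2 = \infty$ is literally the condition~\eqref{eq:sum-Pn2}. It is also the hypothesis~\eqref{eq:44} of Lemma~\ref{thm:L1}, which therefore applies and tells me that $J$ is self-adjoint and positive-semidefinite (the latter via \cite[Thm.~1]{Karlin1957}). With \eqref{eq:sum-Pn2} in hand and $J$ positive-semidefinite, Theorem~\ref{thm:T1} now gives that $J$ is self-adjoint and invertible, so $0 \notin \sigmaP(J)$, and moreover identifies $\Tr(J^{-1})$ with the (a priori $[0,+\infty]$-valued) sum appearing in~\eqref{eq:sum-trace}.

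Next I would evaluate that sum. Writing it as
\[
   \Tr(J^{-1}) = \sum_{n=0}^\infty \left[-\left(\sum_{j=n}^\infty \frac{1}{a_j P_j(0) P_{j+1}(0)}\right) P_n(0)\right] P_n(0),
\]
the bracketed factor is exactly the quantity computed in Lemma~\ref{thm:L1}. Substituting its closed form and pairing it with $P_n(0) = (-1)^n \sqrt{\pi_n}(1+\fraks_n)$, the signs $(-1)^n$ cancel and the two factors $\sqrt{\pi_n}$ merge into $\pi_n$. When $\fraks_\infty < \infty$ this produces the first line of~\eqref{eq:45a}, namely $\Tr(J^{-1}) = (1+\fraks_\infty)^{-1}\sum_{n}\pi_n(1+\fraks_n)\sum_{j\geq n}(\lambda_j\pi_j)^{-1}$; when $\fraks_\infty = \infty$ it produces~\eqref{eq:45b}.

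Finally, to reach the second expression in~\eqref{eq:45a} I would interchange the two summations. Since every summand is nonnegative, Tonelli's theorem applies with no convergence assumption and rewrites $\sum_{n=0}^\infty \pi_n(1+\fraks_n)\sum_{j=n}^\infty (\lambda_j\pi_j)^{-1}$ as $\sum_{j=0}^\infty (\lambda_j\pi_j)^{-1}\sum_{n=0}^j \pi_n(1+\fraks_n)$. There is no genuine analytic difficulty here: each step is an algebraic substitution from the quoted lemmas or a rearrangement of a series of positive terms. The only point deserving a word of care is that all identities are to be read in $[0,+\infty]$, so that the displayed formulas continue to hold (with value $+\infty$) in the case where the right-hand side diverges and $J^{-1}$ is consequently not of trace class.
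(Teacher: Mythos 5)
Your proposal is correct and is exactly the route the paper intends: the paper states Theorem~\ref{thm:2} as ``a straightforward combination of Theorem~\ref{thm:T1}, Lemma~\ref{lem:1} and Lemma~\ref{thm:L1}'', and you have supplied precisely that combination, including the sign cancellation via $P_n(0)=(-1)^n\sqrt{\pi_n}(1+\fraks_n)$ and the Tonelli interchange for the second form of \eqref{eq:45a}. Your closing remark on reading the identities in $[0,+\infty]$ is a sensible clarification consistent with the ``if and only if'' in Theorem~\ref{thm:T1}.
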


\begin{example}\label{rem:jat22}
In \cite{Stovicek2022} the following Jacobi parameters
\[
	a_n = \kappa \tilde{a}_n, \quad b_n = \tilde{a}_n + \kappa^2 \tilde{a}_{n-1}, \quad n \in \NN_0,
\]
where $\tilde{a}_{-1} := 0, \kappa \in (0,1)$ and $(\tilde{a}_n : n \in \NN_0)$ is a positive sequence such that
\begin{equation} \label{eq:46}
	\sum_{n=0}^\infty \frac{1}{\tilde{a}_n} < \infty.
\end{equation}
Notice that by setting
\[
	\lambda_n = \tilde{a}_n, \quad \mu_{n} = \kappa^2 \tilde{a}_{n-1}
\]
leads to the setup \eqref{eq:34} with $\mu_0=0$. Thus, by \eqref{eq:35} we have for $n \geq 1$,
\[
	\pi_n 
	= 
	\frac{\lambda_0 \lambda_1 \ldots \lambda_{n-1}}{\mu_1 \mu_2 \ldots \mu_n} 
	=
	\frac{1}{\kappa^{2n}}.
\]
Therefore, if $0 \notin \sigmaEss(J)$, then the identity \eqref{eq:45a} reads
\[
	\Tr (J^{-1})
	= \sum_{j=0}^\infty
	\frac{1}{\tilde{a}_j} \sum_{n=0}^j \kappa^{2(j-n)}
	= \sum_{j=0}^\infty
	\frac{1}{\tilde{a}_j} \frac{1 - \kappa^{2j+2}}{1-\kappa^{2}}.
\]
which, thanks to \eqref{eq:46}, is finite.
\end{example}

\subsection{The characteristic function} \label{sec:5:2}
In this subsection we are still focusing on Jacobi operators with the parameters as given in \eqref{eq:34}.
To simplify the discussion we shall assume that
\[
	\sum_{j=0}^{\infty}
	\frac{1}{\lambda_{j} \pi_{j}} < \infty.
\]
In addition to formula~\eqref{eq:38}, we also have
\begin{equation}
	\label{eq:mu0-wn0}
	w_{n}(0) = 
	(-1)^{n} \sqrt{\pi_{n}}
	\Bigg(
	1 + 
	\mu_{0} \sum_{j=0}^{\infty}
	\frac{1}{\lambda_{j} \pi_{j}}
	\Bigg)^{-1}
	\sum_{j=n}^{\infty}
	\frac{1}{\lambda_{j} \pi_{j}}.
\end{equation}
This equation is obtained as a combination of \eqref{eq:w0-Po-sum}
with \eqref{eq:s-infty-finite}.

With regard to applications of Lemma~\ref{thm:Fchar-aux} to follow
it is convenient to set
\begin{equation}
    \label{eq:SN}
	S_{n}
	=
	\sum_{j=n}^{\infty}
	\frac{1}{\lambda_{j}\pi_{j}}, \quad n \geq 0.
\end{equation}
Hence $\fraks_{n} = \mu_{0}\,(S_{0}-S_{n})$, see~\eqref{eq:44}. Then, by \eqref{eq:38} and \eqref{eq:mu0-wn0}, we can write
\begin{equation}
    \label{eq:Pn0-wn0}
	P_{n}(0)
	=
	(-1)^{n} \sqrt{\pi_{n}}
	\big(
	1+\mu_{0} (S_{0}-S_{n})
	\big), \quad
	w_{n}(0)
	=
	(-1)^{n}
	\frac{\sqrt{\pi_{n}} S_{n}}{1 + \mu_{0} S_{0}},
\end{equation}
and
\[
	\kappa_{n} 
	= 
	w_{n}(0)P_{n}(0)
	=
	\frac{1 + \mu_{0}(S_{0}-S_{n})}{1 + \mu_{0} S_{0}} 
	\pi_{n} S_{n}.
\]
Further we put, by definition,
\[
	S_{-1}
	=
	\frac{1}{\mu_{0}}
	+
	S_{0}
\]
if $\mu_{0} > 0$, and $S_{-1} = \infty$ if $\mu_{0} = 0$. Then
\[
	\frac{1+\mu_{0} (S_{0}-S_{n})}{1+\mu_{0} S_{0}}
	=
	1 - \frac{\mu_{0} S_{n}}{1 + \mu_{0} S_{0}}
	=
	1 - \frac{S_{n}}{S_{-1}}.
\]
Hence
\begin{equation}
 \label{eq:Pn0-kappan0}
    P_n(0) = (-1)^n \sqrt{\pi_n}\; (1+\mu_0 S_0)
    \bigg(1 - \frac{S_{n}}{S_{-1}}\bigg),\quad
	\kappa_{n}
	=
	\bigg(
	1 - \frac{S_{n}}{S_{-1}}
	\bigg)
	\pi_{n} S_{n}.
\end{equation}

Further we intend to apply Lemma \ref{thm:Fchar-aux}. By (\ref{eq:Pn0-wn0}) and
(\ref{eq:Pn0-kappan0}) we have
\begin{equation}
    \label{eq:bm-kj}
    \bm{k}_j = \sqrt{\kappa_j}
    = \sqrt{\bigg(1 - \frac{S_{j}}{S_{-1}}\bigg)\pi_j S_j}.
\end{equation}
and
\begin{equation}
    \label{eq:Kmn}
	K_{m,n}
	=
	-\sqrt{\frac{1 - {S_{n}}/{S_{-1}}}{1 - {S_{m}}/{S_{-1}}}}
	\sqrt{\pi_{m} S_{m} \pi_{n} S_{n}}
	\bigg(
	1-\frac{S_{m}}{S_{n}}
	\bigg) \quad \text{for } m>n,
\end{equation}
see \eqref{eq:bm-k}, \eqref{eq:K}.
Suppose $\ell\in\mathbb{N}_0$. Starting from
\[
    \bm{k}^T\cdot K^\ell\cdot \bm{k}
    = \sum_{j_1=0}^\infty \sum_{j_2=0}^\infty \cdots \sum_{j_{\ell+1}=0}^\infty
    \bm{k}_{j_{\ell+1}} K_{j_{\ell+1},j_{\ell}} \,\cdots\, K_{j_2,j_1} \bm{k}_{j_1}
\]
we can use (\ref{eq:bm-kj}), (\ref{eq:Kmn}), take into account that
$K_{m,n}=0$ for $m\leq n$ and get
\begin{align*}
    \bm{k}^T\cdot K^\ell\cdot \bm{k}
    =\ & (-1)^\ell \sum_{0\leq j_1<j_2<\dots j_{\ell+1}}
    \pi_{j_{\ell+1}}S_{j_{\ell+1}} \ldots \pi_{j_2}S_{j_2} \pi_{j_1}S_{j_1}
    \bigg(1 - \frac{S_{j_{\ell+1}}}{S_{j_{\ell}}}\bigg)
    \cdots \bigg(1 - \frac{S_{j_2}}{S_{j_1}}\bigg) \\
    &\times \sqrt{1 - \frac{S_{j_{\ell+1}}}{S_{-1}}}
    \,\sqrt{\frac{1 - {S_{j_\ell}}/{S_{-1}}}{1 - {S_{j_{\ell+1}}}/{S_{-1}}}}
    \,\cdots\,
    \sqrt{\frac{1 - {S_{j_1}}/{S_{-1}}}{1 - {S_{j_2}}/{S_{-1}}}}
    \,\sqrt{1 - \frac{S_{j_1}}{S_{-1}}}
\end{align*}
Now, expanding $(\Id-zK)^{-1}$ into a geometric series in $z$ in \eqref{eq:Fchar}
(and writing $m$ instead of $\ell+1$) we obtain a formula for the characteristic function
\begin{equation}
	\label{eq:Fchar-B-D}
	\frakF(z) 
	=
	1 + \sum_{m=1}^{\infty}
	(-1)^{m} z^{m}
	\sum_{0 \leq j_{1} < j_{2} < \ldots < j_{m}}
	\pi_{j_1}S_{j_1}
	\pi_{j_2}S_{j_2}
	\dots
	\pi_{j_m} S_{j_m}
	\bigg(
	1-\frac{S_{j_{1}}}{S_{-1}}
	\bigg)
	\bigg(
	1-\frac{S_{j_{2}}}{S_{j_{1}}}
	\bigg)
	\cdots
	\bigg(
	1 - \frac{S_{j_{m}}}{S_{j_{m-1}}}
	\bigg).
\end{equation}

Particularly let us consider the case when 
\begin{equation}
    \label{eq:mu0}
    \mu_{0}=0
\end{equation}
(and so $S_{-1}=\infty$). Then (see~\eqref{eq:38},
\eqref{eq:mu0-wn0} with $\mu_{0} = 0$)
\begin{equation}
    \label{eq:63}
    P_{n}(0)
    =
    (-1)^{n} \sqrt{\pi_{n}}, \quad
    w_{n}(0)
    =
    (-1)^{n}\sqrt{\pi_{n}} S_{n}, \quad \text{and} \quad
    \kappa_{n}
    =
    \pi_{n}S_{n}.
\end{equation}
Using Lemma~\ref{thm:Fchar-aux} we can express the characteristic
function as
\[
	\frakF(z) 
	= 
	1 + \sum_{m=1}^{\infty}
	(-1)^{m} z^{m}
	\sum_{0 \leq j_{1} < j_{2} < \ldots < j_{m}}
	\pi_{j_{1}} \pi_{j_{2}}
	\dots
	\pi_{j_{m}}
	(S_{j_{1}} - S_{j_{2}})
	\cdots
	(S_{j_{m-1}} - S_{j_{m}})
	S_{j_{m}}.
\]
This can be rewritten as follows,
\begin{equation}
	\label{eq:charF-general-mu0-0}
	\frakF(z) 
	= 
	1 + \sum_{m=1}^{\infty}
	(-1)^{m} z^{m}
	\sum_{0 \leq j_{1} \leq k_{1} < j_{2} \leq k_{2} < \ldots \leq  k_{m-1} < j_{m} \leq k_{m}}
	\frac
	{\pi_{j_{1}}\pi_{j_{2}}
	\dots
	\pi_{j_{m}}}
	{\lambda_{k_{1}} \pi_{k_{1}}
	\dots
	\lambda_{k_{m-1}} \pi_{k_{m-1}} \lambda_{k_{m}} \pi_{k_{m}}}.
\end{equation}
Denote
\begin{equation}
\label{eq:Tk}
	T_{k}
	=
	\sum_{j=0}^{k} \pi_{j}, \quad k \geq 0.
\end{equation}
Changing the order of summation we get
\begin{equation}
	\label{eq:Fchar-T}
	\frakF(z)
	=
	1 + \sum_{m=1}^{\infty}
	(-1)^{m} z^{m}
	\sum_{0 \leq k_{1} < k_{2} < \ldots < k_{m}}
	\frac
	{T_{k_{1}} 
	(T_{k_{2}}-T_{k_{1}}) 
	\dots 
	(T_{k_{m}} - T_{k_{m-1}})}
	{\lambda_{k_{1}} \pi_{k_{1}}
	\dots
	\lambda_{k_{m-1}} \pi_{k_{m-1}}
	\lambda_{k_{m}} \pi_{k_{m}}}.
\end{equation}

\subsection{The function $\mathfrak{W}(z)$} \label{sec:5:3}
Let us make a few comments on computing the function $\frakW(z)$, as given
in \eqref{eq:def-W} and in \eqref{eq:W-series}, but still assuming \eqref{eq:mu0}.
$J^{[1]}$ again stands for the first associated matrix to $J$ obtained by removing the first row and first column, and $\lambda_{n}^{[1]}$,
$\mu_{n}^{[1]}$ for the corresponding Birth-Death parameters. In
a similar fashion we introduce the symbol $\pi_{n}^{[1]}$ etc. Then
\[
	\lambda_{n}^{[1]}
	=
	\lambda_{n+1}, \quad
	\mu_{n}^{[1]}
	=
	\mu_{n+1}, \quad 
	\pi_{n}^{[1]}
	=
	\pi_{n+1} \frac{\mu_{1}}{\lambda_{0}}.
\]
Furthermore,
\[
	S_{n}^{[1]}
	=
	\sum_{j=n}^{\infty}
	\frac{1}{\lambda_{j}^{[1]} \pi_{j}^{[1]}}
	=
	\frac{\lambda_{0}}{\mu_{1}} S_{n+1},
\]
and also ($\pi_{0}=1$)
\[
	S_{-1}^{[1]}
	=
	\frac{1}{\mu_{0}^{[1]}} + S_{0}^{[1]}
	=
	\frac{1}{\mu_{1}} + \frac{\lambda_{0}}{\mu_{1}} S_{1} 
	=
	\frac{\lambda_{0}}{\mu_{1}}
	\bigg(
	\frac{1}{\lambda_{0}\pi_{0}} + S_{1}
	\bigg) 
	=
	\frac{\lambda_{0}}{\mu_{1}}\,S_{0}.
\]

Regarding the characteristic function of $J^{[1]}$, since $\mu_{0}^{[1]} \neq 0$,
we can use formula \eqref{eq:Fchar-B-D}, namely
\begin{align*}
	\frakF^{[1]}(z) 
	=
	1 
	&+ 
	\sum_{m=1}^{\infty}
	(-1)^{m}z^{m} \\
	&\times
	\sum_{0 \leq j_{1} < j_{2} < \ldots < j_{m}}
	\pi_{j_{1}}^{[1]} S_{j_{1}}^{[1]}
	\dots
	\pi_{j_{m}}^{[1]} S_{j_{m}}^{[1]}
	\Bigg(
	1 - \frac{S_{j_{1}}^{[1]}}{S_{-1}^{[1]}}
	\Bigg)
	\Bigg(
	1 - \frac{S_{j_{2}}^{[1]}}{S_{j_{1}}^{[1]}}
	\Bigg)
	\cdots
	\Bigg(
	1 - \frac{S_{j_{m}}^{[1]}}{S_{j_{m-1}}^{[1]}}
	\Bigg).
\end{align*}
Hence
\[
	\frakF^{[1]}(z) 
	= 
	1 + \frac{1}{S_{0}}
	\sum_{m=1}^{\infty}
	(-1)^{m} z^{m}
	\sum_{1 \leq j_{1} < j_{2} < \ldots < j_{m}}
	\pi_{j_{1}}
	\dots
	\pi_{j_{m}}
	(S_{0} - S_{j_{1}})
	(S_{j_{1}} - S_{j_{2}})
	\cdots
	(S_{j_{m-1}} - S_{j_{m}})
	S_{j_{m}}.
\]
Substituting for $S_{j}$ we get
(note that $S_{0}=w(0)\equiv w_0(0)$, see \eqref{eq:63} with $\pi_0=1$, as defined in \eqref{eq:35})
\begin{equation}
\label{eq:Wz}    
	\frakW(z) 
	= 
	S_{0} \frakF^{[1]}(z)
	= 
	\sum_{m=0}^{\infty} 
	(-1)^{m}z^{m}
	\sum_{0 \leq k_{1} < j_{1} \leq k_{2} < j_{2} \ldots \leq k_{m} < j_{m} \leq k_{m+1}}
	\frac
	{\pi_{j_{1}} \pi_{j_{2}}
	\dots
	\pi_{j_{m}}}
	{\lambda_{k_{1}} \pi_{k_{1}}
	\dots
	\lambda_{k_{m}} \pi_{k_{m}}
	\lambda_{k_{m+1}} \pi_{k_{m+1}}}
\end{equation}
(one may wish to compare this equation to~\eqref{eq:charF-general-mu0-0}).
To avoid possible misunderstanding concerning the notation used in (\ref{eq:Wz}) let us note that the first term in the sum in this equation, for $m=0$, equals
\[
\sum_{k_1=0}^\infty \frac{1}{\lambda_{k_1}\pi_{k_1}} = S_0,
\]
as it should be. Other terms may be already clear.
The equation can be also rewritten with the aid of the symbol $T_{k}$,
as defined in \eqref{eq:Tk},
\[
	\frakW(z)
	=
	\sum_{m=0}^{\infty}
	(-1)^{m}z^{m}
	\sum_{0 \leq k_{1} < \ldots < k_{m} < k_{m+1}}
	\frac
	{(T_{k_{2}} - T_{k_{1}})
	\cdots
	(T_{k_{m+1}} - T_{k_{m}})}
	{\lambda_{k_{1}} \pi_{k_{1}}
	\dots
	\lambda_{k_{m}} \pi_{k_{m}} 
	\lambda_{k_{m+1}}\pi_{k_{m+1}}}
\]
(again, this may be compared to~\eqref{eq:Fchar-T}).

\section{Some results for positive Jacobi matrices with compact inverses}
\label{sec:4new}

\subsection{On the convergence of the zeros of orthogonal polynomials} \label{sec:4.1}
Suppose that Jacobi operator $J$ is positive. Let $J_F$ be its \emph{Friedrichs extension}(\footnote{If $J$ is self-adjoint, then its every self-adjoint extension is equal to $J$. In particular, then $J=J_F$.}) (see, e.g. \cite[Chapter 8.1]{Schmudgen2017}). Suppose that $J_F^{-1}$ is compact and let $\sigma(J_F) = \{\zeta_j : j \geq 1\}$ be ordered by
\begin{equation} 
	\label{eq:26}
	0 < \zeta_1 < \zeta_2 < \ldots.
\end{equation}
Let $(p_n : n \geq 0)$ be the sequence of monic orthogonal polynomials associated with $\calJ$.
Then by \cite[Theorem I.5.2]{Chihara1978} we have that all the zeros of $(p_n : n \geq 1)$ are 
real, simple and located in $(\zeta_1, \infty)$. In particular, 
\begin{equation} 
	\label{eq:10}
	\forall n \geq 1 \ 
	\forall z \in \CC \setminus (\zeta_1, \infty), \quad
	p_n(z) \neq 0.
\end{equation}
Thus we have
\begin{equation} 
	\label{eq:11}
	p_n(z) = (z - x_{1;n}) \ldots (z - x_{n;n}),
\end{equation}
where
\begin{equation}
	\label{eq:27}
	0 < \zeta_1 < x_{1;n} < x_{2;n} < \ldots < x_{n;n}.
\end{equation}

The following result will be instrumental in what follows.
\begin{theorem} \label{thm:3}
Suppose that Jacobi operator $J$ is positive and let $J_F$ be its Friedrichs extension. Let $J_F^{-1}$ be compact.
Let $\sigma(J_F) = \{\zeta_j : j \geq 1\}$ be ordered as in \eqref{eq:26} and let the monic orthogonal polynomials
be written as in \eqref{eq:11} and \eqref{eq:27}. Then
\begin{align}
	\label{eq:14}
	&\zeta_j < x_{j;n}, \quad j=1,2,\ldots,n;\ n \geq 1,\\
	\label{eq:15}
	&\zeta_j = \lim_{n \to \infty} x_{j;n}, \quad j=1,2,\ldots.
\end{align}
\end{theorem}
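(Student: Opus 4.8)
The plan is to realize the zeros $x_{j;n}$ as eigenvalues of a natural sequence of finite matrices that approximate $J_F$, and then invoke eigenvalue interlacing together with a variational (min-max) argument. The $n$ roots of $p_n$ are precisely the eigenvalues of the $n\times n$ truncated Jacobi matrix $J_n$ obtained by keeping the first $n$ rows and columns of $\calJ$; this is the standard identification between orthogonal polynomials and truncated Jacobi matrices. Thus \eqref{eq:11} and \eqref{eq:27} say that $\Spec(J_n)=\{x_{1;n}<\dots<x_{n;n}\}$. The two claims then become statements comparing the eigenvalues of $J_n$ with those of the infinite operator $J_F$.

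\smallskip
For the lower bound \eqref{eq:14}, I would use the min-max characterization of eigenvalues of the positive self-adjoint operator $J_F$. Since $J_n$ is, up to the embedding $\CC^n\hookrightarrow\ell^2(\NN_0)$ onto $\lin\{\bm e_0,\dots,\bm e_{n-1}\}$, the compression of the quadratic form of $J$ to this finite-dimensional subspace, and since the Friedrichs extension has the smallest form domain with $J_F\ge J$ on finitely supported vectors, the $j$th eigenvalue $x_{j;n}$ of the compression is bounded below by the $j$th min-max value of $J_F$, namely $\zeta_j$. The strictness $\zeta_j<x_{j;n}$ should follow because the trial subspaces available to $J_n$ are genuinely smaller: the spectral subspace of $J_F$ spanned by eigenvectors for $\zeta_1,\dots,\zeta_j$ cannot lie inside any finite coordinate subspace (its eigenvectors have full support, as formal solutions $\bm P(\zeta)$ are never finitely supported), so equality in min-max is impossible. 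I would phrase this carefully using that $J_F$ is the form closure, so that the form values on finitely supported vectors agree with $\langle \bm v, J\bm v\rangle$.

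\smallskip
For the convergence \eqref{eq:15}, the key point is that $J_n\uparrow J_F$ in the strong resolvent (indeed monotone form) sense: the compressions of the form of $J_F$ to the increasing subspaces $\lin\{\bm e_0,\dots,\bm e_{n-1}\}$ converge monotonically upward to the full form. By \eqref{eq:14} the sequence $(x_{j;n})_{n\ge j}$ is bounded below by $\zeta_j$, and a monotonicity argument (adding a row/column can only decrease each min-max value, so $x_{j;n}$ is nonincreasing in $n$ for fixed $j$) shows each sequence is monotone decreasing and bounded below, hence convergent to some limit $\ell_j\ge\zeta_j$. That the limit equals $\zeta_j$ is exactly the statement that the min-max values of the approximating forms converge to those of $J_F$, which holds because $J_F$ is the form closure of $J$ and $J_F^{-1}$ is compact (so the spectrum below any level is finite and min-max values are attained). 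Equivalently, one may pass to inverses: $J_F^{-1}$ is compact and $J_n^{-1}$ converges to $J_F^{-1}$ in the appropriate sense, and eigenvalue continuity for compact operators gives $x_{j;n}^{-1}\to\zeta_j^{-1}$.

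\smallskip
The main obstacle, I expect, is the strict inequality in \eqref{eq:14} and the precise justification of form convergence for the \emph{Friedrichs} extension specifically (as opposed to an arbitrary self-adjoint extension). When $J$ is not essentially self-adjoint the choice of extension matters, and one must verify that the truncated forms increase to the Friedrichs form and not to some other extension's form; this is where the characterization of $J_F$ as the closure of the form $\bm v\mapsto\langle\bm v,J\bm v\rangle$ on finitely supported vectors is essential. Establishing that the limit of the min-max values is attained and equals $\zeta_j$ (rather than merely $\ge\zeta_j$) requires knowing that the low-lying spectral subspaces of $J_F$ can be approximated arbitrarily well by finitely supported vectors in the form norm, which again follows from $J_F$ being the form closure and from compactness of $J_F^{-1}$.
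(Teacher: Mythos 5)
Your proposal is correct in substance but takes a genuinely different route from the paper. The paper stays entirely inside classical orthogonal-polynomial theory: it invokes Chihara's separation theorem to obtain \eqref{eq:28}, namely that each gap $(x_{i;n},x_{i+1;n})$ between consecutive zeros contains at least one point of $\sigma(J_F)$, which together with \eqref{eq:27} yields \eqref{eq:14} by a simple count, and it then cites Chihara's limit theorems for the zeros (treating the self-adjoint and non-self-adjoint cases separately) to get \eqref{eq:15}. You instead read the $x_{j;n}$ as eigenvalues of the truncations $\calJ_n$ and run the min--max principle: $x_{j;n}\ge\zeta_j$ because every $j$-dimensional subspace of $\CC^n$ is an admissible trial subspace for the form of $J_F$ on which the form equals $\langle \bm v,\calJ\bm v\rangle$; monotone decrease of $x_{j;n}$ in $n$ because the family of trial subspaces grows; and convergence of the truncated min--max values to those of $J_F$ because the Friedrichs extension is precisely the form closure of $\calJ$ on finitely supported vectors, so low-lying spectral subspaces can be approximated in form norm by subspaces of $\CC^n$. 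Your argument is self-contained modulo standard spectral theory and makes transparent why it is the Friedrichs extension, and no other, that governs the limit; the paper's proof is shorter but outsources the work to references, and the statement \eqref{eq:28} it extracts is somewhat stronger than \eqref{eq:14} alone. One step of yours needs tightening: for the strict inequality in \eqref{eq:14}, observing that the spectral subspace of $J_F$ for $\zeta_1,\dots,\zeta_j$ is not contained in $\CC^n$ does not by itself exclude that some \emph{other} $j$-dimensional subspace $V\subset\CC^n$ attains $\zeta_j$ as the maximum of its Rayleigh quotients. The standard repair: such a $V$ would contain a nonzero $v$ orthogonal to the $(j-1)$-dimensional spectral subspace for $\zeta_1,\dots,\zeta_{j-1}$, and the resulting two-sided bound forces $v$ to be an eigenvector of $J_F$ for $\zeta_j$; this contradicts the fact you already invoke, that no eigenvector of a Jacobi operator is finitely supported (two consecutive vanishing entries of $\bm{P}(\zeta)$ would propagate through the recurrence and contradict $P_0\equiv 1$).
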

\begin{proof}
First of all, by \cite[Theorem II.4.1]{Chihara1978} we get that(\footnote{By $\sharp X$ we denote the number of elements in $X$.})
\begin{equation} \label{eq:28}
	\sharp \big( (x_{i;n}, x_{i+1;n}) \cap \sigma(J_F) \big) \geq 1, \quad i=1,2,\ldots,n-1.
\end{equation}
By \eqref{eq:27} and \eqref{eq:26} it easily implies \eqref{eq:14}.
Now if $J$ is not-self-adjoint, i.e. $J \neq J_F$, the formula~\eqref{eq:15} has been proven in \cite[p. 479]{Chihara1968}.
On the other hand, if $J$ is self-adjoint, then formula~\eqref{eq:28} together with Theorems I.5.3 and II.4.3 from \cite{Chihara1978} imply~\eqref{eq:15}.
\end{proof}

\subsection{The zero counting measures} \label{sec:zero}
Suppose that we are working in the setup of Section~\ref{sec:4.1}. 
In this section we shall analyse weak convergence of the sequence of \emph{zero counting measures} of $P_n$ defined by
\begin{equation} 
    \label{eq:II:5}
    \nu_n = \sum_{j=1}^n \delta_{x_{j;n}},
\end{equation}
see formulas \eqref{eq:11} and \eqref{eq:27}.

\begin{theorem} \label{thm:5}
Suppose that a Jacobi matrix $J$ is positive and $J_F^{-1}$ is compact. 
Let $\sigma(J_F) = \{\zeta_j : j \geq 1\}$ be ordered as in \eqref{eq:26}.
Suppose that for some continuous non-decreasing function $w : \RR \to (0, \infty)$ we have
\begin{equation} 
    \label{eq:56}
    \sum_{j=1}^\infty \frac{1}{w(\zeta_j)} < \infty.
\end{equation}
Then for any(\footnote{By $\calC_b(\RR)$ we denote the space of continuous bounded functions on the real line equipped with the supremum norm $\norm{\cdot}_\infty$.}) $f \in \calC_b(\RR)$ such that $\norm{w f}_\infty < \infty$ we have
\begin{equation} 
    \label{eq:55a}
    \lim_{n \to \infty} 
    \int_\RR f \ud \nu_n =
    \int_\RR f \ud \nu_\infty,
\end{equation}
where $\nu_n$ is defined in \eqref{eq:II:5} and
\begin{equation} 
    \label{eq:55b}
    \nu_\infty = \sum_{j=1}^\infty \delta_{\zeta_j}.
\end{equation}
\end{theorem}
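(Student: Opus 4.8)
The plan is to rewrite both sides of \eqref{eq:55a} as sums over atoms and then recognize the statement as an instance of the dominated convergence theorem for series. Writing $\int_\RR f\,\ud\nu_n = \sum_{j=1}^n f(x_{j;n})$ and $\int_\RR f\,\ud\nu_\infty = \sum_{j=1}^\infty f(\zeta_j)$, and setting $C := \norm{wf}_\infty < \infty$, the hypothesis furnishes the pointwise bound $\abs{f(x)} \le C/w(x)$ for every $x \in \RR$. In particular $\abs{f(\zeta_j)} \le C/w(\zeta_j)$, so by \eqref{eq:56} the right-hand side of \eqref{eq:55a} converges absolutely.

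The key step is to produce a summable majorant for the triangular array $\bigl(f(x_{j;n})\bigr)_{1 \le j \le n}$ that does not depend on $n$. Here Theorem~\ref{thm:3} enters decisively: by \eqref{eq:14} we have $x_{j;n} > \zeta_j$ for every $1 \le j \le n$, and since $w$ is non-decreasing this yields $w(x_{j;n}) \ge w(\zeta_j)$, hence
\[
	\abs{f(x_{j;n})} \le \frac{C}{w(x_{j;n})} \le \frac{C}{w(\zeta_j)} =: g_j, \qquad 1 \le j \le n.
\]
By \eqref{eq:56} we have $\sum_{j=1}^\infty g_j = C\sum_{j=1}^\infty 1/w(\zeta_j) < \infty$, so $(g_j)$ is the desired majorant, valid uniformly in $n$.

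To conclude I would pad the array with zeros, putting $a_{j,n} := f(x_{j;n})$ for $j \le n$ and $a_{j,n} := 0$ for $j > n$, so that $\int_\RR f\,\ud\nu_n = \sum_{j=1}^\infty a_{j,n}$ and $\abs{a_{j,n}} \le g_j$ for all $j,n$. For each fixed $j$ we have $a_{j,n} = f(x_{j;n})$ once $n \ge j$, and by \eqref{eq:15} together with the continuity of $f$ it follows that $a_{j,n} \to f(\zeta_j)$ as $n \to \infty$. The dominated convergence theorem for series (with counting measure on $\NN$) then gives
\[
	\lim_{n\to\infty} \int_\RR f\,\ud\nu_n = \lim_{n\to\infty}\sum_{j=1}^\infty a_{j,n} = \sum_{j=1}^\infty f(\zeta_j) = \int_\RR f\,\ud\nu_\infty,
\]
which is \eqref{eq:55a}.

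There is no serious obstacle once Theorem~\ref{thm:3} is available: the whole argument hinges on the one-sided comparison $x_{j;n} > \zeta_j$ combined with the monotonicity of $w$, which converts the weighted boundedness of $f$ into an $n$-independent summable bound. The only point requiring a little care is the bookkeeping of the triangular array, i.e.\ extending each finite sum to a genuine series by zero-padding so that the classical dominated convergence theorem applies verbatim; the facts \eqref{eq:14} and \eqref{eq:15} are precisely what make both the domination and the termwise limit work.
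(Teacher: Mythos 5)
Your proof is correct and follows essentially the same route as the paper: both arguments obtain the $n$-independent summable majorant $C/w(\zeta_j)$ from the inequality $x_{j;n}>\zeta_j$ of Theorem~\ref{thm:3} together with the monotonicity of $w$, get the termwise limits from \eqref{eq:15} and continuity, and conclude by dominated convergence for the zero-padded triangular array. The only cosmetic difference is that the paper first restates the claim as \eqref{eq:59} for $\tilde{f}=wf\in\calC_b(\RR)$ and then divides by $w$, whereas you work directly with the bound $\abs{f}\le C/w$; the two are interchangeable.
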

\begin{proof}
In order to prove \eqref{eq:55a} it is enough to show that
\begin{equation}
    \label{eq:59}
    \lim_{n \to \infty} 
    \sum_{j=1}^n 
    \frac{\tilde{f}(x_{j;n})}{w(x_{j;n})} 
    =
    \sum_{j=1}^\infty 
    \frac{\tilde{f}(\zeta_j)}{w(\zeta_j)}, \quad \tilde{f} \in \calC_b(\RR).
\end{equation}
Indeed, to obtain~\eqref{eq:55a} it is enough to apply \eqref{eq:59} for $\tilde{f}(x) = w(x) f(x)$. 
Notice that by \eqref{eq:14} and the fact that the function $w$ is non-decreasing we get
\begin{equation} 
    \label{eq:57}
    \frac{\tilde{f}(x_{j;n})}{w(x_{j;n})} 
    \leq 
    \frac{\norm{\tilde{f}}_\infty}{w(x_{j;n})} 
    \leq 
    \frac{\norm{\tilde{f}}_\infty}{w(\zeta_j)}.
\end{equation}
Moreover, by \eqref{eq:15} and the continuity of $\tilde{f}$ and $w$ we get that
\begin{equation} 
    \label{eq:58}
    \lim_{n \to \infty} 
    \frac{\tilde{f}(x_{j;n})}{w(x_{j;n})} =
    \frac{\tilde{f}(\zeta_j)}{w(\zeta_j)}.
\end{equation}
By combining \eqref{eq:58}, \eqref{eq:57} and \eqref{eq:56}, by Dominated Convergence Theorem, we get~\eqref{eq:57} what concludes the proof.
\end{proof}

\begin{remark} \label{rem:2}
The hypothesis~\eqref{eq:56} is only important for continuous functions $f$ with non-compact support. Indeed, if(\footnote{The set $\calC_c(\RR) \subset \calC_b(\RR)$ consists of compactly supported continuous functions on $\RR$.}) $f \in \calC_c(\RR)$, then the convergence~\eqref{eq:55a} holds by \eqref{eq:15} as each side of \eqref{eq:55a} involves only sums with uniformly bounded number of terms.
\end{remark}

\begin{example}
Suppose that $J$ is positive and $J_F^{-1}$ belongs to the Schatten class $\calS_k$ for some $k \geq 1$. Let $\sigma(J_F) = \{\zeta_j : j \geq 1\}$ be ordered as in \eqref{eq:26}. Then since
\[
    \sum_{j=1}^\infty \frac{1}{\zeta_j^k} < \infty
\]
we can take in Theorem~\ref{thm:5}
\[
    w(x) = 
    \begin{cases}
        (1 + x^2)^{k/2} & x > 0, \\
        1 & x \leq 0.
    \end{cases}
\]
\end{example}

\subsection{The Christoffel--Darboux kernels} \label{sec:CDKernel}
Let us recall that given $J$, let $\tilde{J}$ be a self-adjoint extension on $\ell^2(\NN_0)$ of $J$. 
Then we can define a Borel probability measure $\tilde{\mu}$ by the formula
\[
	\tilde{\mu}(\cdot) = \langle \bm{e}_0, E_{\tilde{J}}(\cdot) \bm{e}_0 \rangle,
\]
where $E_{\tilde{J}}$ is the projection-valued spectral measure of $\tilde{J}$.
By Riesz's theorem (see e.g. \cite[Theorem 2.14, p. 62]{Shohat1943}) the sequence 
$(P_n : n \geq 0)$ forms an orthonormal basis in $L^2(\tilde{\mu})$. Now, for any $n \geq 1$ let
\[
	K_n(x,y) = \sum_{j=0}^{n-1} p_j(x) p_j(y), \quad x,y \in \RR
\]
be the $n$th \emph{Christoffel--Darboux kernel}. In this section we shall study the weak convergence of the sequence
\begin{equation}
	\label{eq:50}
	\ud \eta_n = K_n(x,x) \ud \tilde{\mu}(x).
\end{equation}

Let us start with a general result.
\begin{theorem} \label{thm:4}
Let $\tilde{J}$ be a self-adjoint extension on $\ell^2(\NN_0)$ of $J$. If $\sigmaEss(\tilde{J}) = \emptyset$, then
\begin{equation} \label{eq:IV:2}
	\lim_{n \to \infty} 
	\int_\RR f \ud \eta_n 
	= 
	\int_\RR f \ud \eta_\infty
\end{equation}
for any function $f \in L^1(\eta_\infty)$, where
\begin{equation} 
	\label{eq:60}
	\eta_\infty = \sum_{\zeta \in \sigma(\tilde{J})} \delta_\zeta.
\end{equation}
\end{theorem}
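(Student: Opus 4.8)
The plan is to exploit the fact that, since $\sigmaEss(\tilde{J})=\emptyset$, the spectral measure $\tilde{\mu}$ is purely atomic and supported precisely on $\sigma(\tilde{J})=\{\zeta_j : j\geq 1\}$. Writing $\mu_j := \tilde{\mu}(\{\zeta_j\})>0$, every integral against $\tilde{\mu}$ collapses to a series over the eigenvalues, so that
\[
   \int_\RR f \ud \eta_n
   = \int_\RR f(x)\, K_n(x,x) \ud \tilde{\mu}(x)
   = \sum_{j=1}^\infty f(\zeta_j)\, K_n(\zeta_j,\zeta_j)\, \mu_j .
\]
The target $\int_\RR f \ud \eta_\infty = \sum_{j=1}^\infty f(\zeta_j)$ is then a series over the same index set, and the whole statement reduces to a termwise convergence of series with a uniform summable majorant, i.e. a discrete dominated convergence (Tannery-type) argument.

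The two facts about the diagonal of the Christoffel--Darboux kernel at a mass point drive the proof. First, $K_n(\zeta,\zeta)=\sum_{j=0}^{n-1} P_j(\zeta)^2$ is non-decreasing in $n$ and, by the Shohat--Akhiezer formula recalled above (which is available here because, by Riesz's theorem, $(P_n : n\geq 0)$ is an orthonormal basis of $L^2(\tilde{\mu})$), it converges to $\sum_{j=0}^\infty P_j(\zeta)^2 = \mu(\{\zeta\})^{-1}$ at every eigenvalue $\zeta$. Hence for each fixed $j$,
\[
   \lim_{n\to\infty} K_n(\zeta_j,\zeta_j)\,\mu_j
   = \frac{1}{\mu_j}\cdot \mu_j = 1 ,
\]
so the $j$-th summand tends to $f(\zeta_j)$. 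Second, monotonicity of the partial sums gives the uniform bound $0 \leq K_n(\zeta_j,\zeta_j)\,\mu_j \leq 1$ for all $n$, whence
\[
   \bigl| f(\zeta_j)\, K_n(\zeta_j,\zeta_j)\, \mu_j \bigr| \leq |f(\zeta_j)| ,
   \qquad \sum_{j=1}^\infty |f(\zeta_j)| = \int_\RR |f| \ud \eta_\infty < \infty ,
\]
the finiteness being exactly the hypothesis $f\in L^1(\eta_\infty)$. Applying the dominated convergence theorem with respect to counting measure on $\{1,2,\ldots\}$ then lets me interchange the limit with the summation, yielding $\lim_{n\to\infty}\int_\RR f \ud\eta_n = \sum_{j=1}^\infty f(\zeta_j) = \int_\RR f \ud\eta_\infty$, which is \eqref{eq:IV:2}.

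I expect the only delicate point to be the justification of the limit $K_n(\zeta,\zeta)\to \mu(\{\zeta\})^{-1}$ in the \emph{indeterminate} case, where $\tilde{J}\neq J$ and $\sum_{n} P_n(\zeta)^2$ converges for \emph{every} $\zeta\in\CC$. There one must genuinely use that the self-adjoint extensions of $J$ in $\ell^2(\NN_0)$ correspond to N-extremal measures, for which $(P_n)$ is complete in $L^2(\tilde{\mu})$; expanding the indicator $\mathds{1}_{\{\zeta\}}$ in this basis and applying Parseval gives $1=\mu(\{\zeta\})\sum_n P_n(\zeta)^2$, securing the pointwise limit. Once this and the uniform bound $K_n(\zeta_j,\zeta_j)\mu_j\le 1$ are in hand, the remaining interchange of limit and sum is entirely routine.
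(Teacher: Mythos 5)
Your proposal is correct and follows essentially the same route as the paper: identify $\tilde{\mu}$ as purely atomic, rewrite $\int f\,\ud\eta_n$ as a sum over the eigenvalues weighted by $K_n(\zeta,\zeta)\tilde{\mu}(\{\zeta\})$, use the monotone convergence $K_n(\zeta,\zeta)\nearrow 1/\tilde{\mu}(\{\zeta\})$ (citing the determinate and $N$-extremal cases separately, exactly as the paper does via Shohat--Tamarkin) to get both the pointwise limit and the uniform majorant $|f(\zeta)|$, and conclude by dominated convergence over the counting measure. No substantive difference from the paper's argument.
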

\begin{proof}
Recall that $\sigma(\tilde{J}) = \supp(\tilde{\mu})$. Since $\sigmaEss(\tilde{J}) = \emptyset$, it follows that $\tilde{\mu}$ is purely discrete. Let us recall that
\[
	K_\infty(\zeta, \zeta) := 
	\lim_{n \to \infty} K_n(\zeta,\zeta) 
	= 
	\frac{1}{\tilde{\mu}(\{\zeta\})}, \quad \zeta \in \supp(\tilde{\mu}),
\]
see e.g. \cite[Corollary 2.6, p. 45]{Shohat1943} in case $J=\tilde{J}$ and \cite[Theorem 2.13, p. 60]{Shohat1943} otherwise.
Thus,
\[
	\int_\RR f \ud \eta_n = 
	\sum_{\zeta \in \sigma(\tilde{J})} 
	f(\zeta) 
	\frac{K_n(\zeta,\zeta)}{K_\infty(\zeta, \zeta)}.
\]
Since 
\[
	|f(\zeta)|
	\frac{K_n(\zeta,\zeta)}{K_\infty(\zeta, \zeta)}
	\leq 
	|f(\zeta)|
\]
and
\[
	\lim_{n \to \infty} 
	\frac{K_n(\zeta,\zeta)}{K_\infty(\zeta, \zeta)} = 1
\]
we get \eqref{eq:IV:2} by Dominated Convergence Theorem.
\end{proof}

Let $\nu_n$ be defined in \eqref{eq:II:5}.
It is well-known that the sequences of probability measures $(\tfrac{1}{n} \nu_n : n \geq 1)$ and $(\tfrac{1}{n} \eta_n : n \geq 1)$ are equiconvergent in the sense
\[
	\lim_{n \to \infty} 
	\bigg| 
	\frac{1}{n} \int_\RR f \ud \nu_n - 
	\frac{1}{n} \int_\RR f \ud \eta_n 
	\bigg| = 0, \quad f \in \calC_c(\RR),
\]
see \cite[Theorem 2.1]{Geronimo1988a}. Notice that since $(\nu_n)$ does not depend on the self-adjoint extension of $J$, then the limit (if exists) of $(\tfrac{1}{n} \eta_n)$ also does not depend on the self-adjoint extension of $J$. The following remark states that in general a similar result \emph{does not hold} for the sequences $(\nu_n : n \geq 1)$ and $(\eta_n : n \geq 1)$ themselves.
\begin{remark}
Suppose that $J$ is positive, let $J_F$ be its Friedrichs extension and $\tilde{J}$ be an arbitrary self-adjoint extension of $J$ on $\ell^2(\NN_0)$. Suppose that $J_F^{-1}$ is compact. 
Then by Remark~\ref{rem:2}
\[
	\lim_{n \to \infty} 
	\int_\RR f \ud \nu_n 
	= 
	\int_\RR f \ud \nu_\infty, \quad f \in \calC_c(\RR),
\]
where $\nu_\infty$ is defined in \eqref{eq:55b}. If $J$ is self-adjoint, then $J=J_F=\tilde{J}$. So by Theorem~\ref{thm:4}
\begin{equation} 
	\label{eq:61}
	\lim_{n \to \infty} 
	\int_\RR f \ud \eta_n 
	= 
	\int_\RR f \ud \eta_\infty, \quad f \in \calC_c(\RR),
\end{equation}
where $\eta_\infty$ is defined in \eqref{eq:60}, and since $\tilde{J} = J_F$ we get $\nu_\infty = \eta_\infty$. On the other hand, if $J$ is not self-adjoint, then $\sigmaEss(\tilde{J})=\emptyset$, see e.g. \cite[Theorem 7.7(i)]{Schmudgen2017}. So we still have the convergence~\eqref{eq:61} for $\eta_n$ obtained from $\tilde{\mu}$ via the formula~\eqref{eq:50}. Since all self-adjoint extensions on $\ell^2(\NN_0)$ of $J$ have disjoint spectra, see e.g. \cite[Theorem 7.7(ii)]{Schmudgen2017}, we get $\nu_\infty = \eta_\infty$ iff $\tilde{J} = J_F$.
\end{remark}

\subsection{The regularized characteristic function for Schatten classes} \label{sec:2}
Let a sequence of distinct real numbers $(\zeta_j : j \geq 1)$ ordered by $0 < |\zeta_1| \leq |\zeta_2| < \ldots$
satisfies for some $k \in \{ 1,2,\ldots \}$
\begin{equation} 
	\label{eq:29}
	\sum_{j=1}^\infty \frac{1}{|\zeta_j|^k} < \infty.
\end{equation}
Then one can define an entire function $\frakF_k : \CC \to \CC$ by the formula
\begin{equation} 
	\label{eq:8}
	\frakF_k(z) =
	\prod_{i=1}^\infty 
	E_{k-1} \bigg(\frac{z}{\zeta_i} \bigg),
\end{equation}
where \emph{$n$th elementary factor} is defined by by
\begin{equation} 
    \label{eq:7}
    E_n(w) = (1-w) \exp \bigg(\sum_{j=1}^n \frac{1}{j} w^j \bigg), \quad w \in \CC,
\end{equation}
(see e.g. \cite[Definition VII.5.10]{Conway1978}). We have $\frakF_k(z) = 0$ iff $z=\zeta_j$ for some $j$ (see, e.g. \cite[Theorem VII.5.12]{Conway1978}).

Suppose now that we working in the setup of Section~\ref{sec:4.1}. Suppose additionally, that $J_F^{-1}$ exists and belongs to some Schatten class $\calS_k$ for some $k \in \{1,2, \ldots \}$. It means that for $\sigma(J_F) = \{\zeta_j : j \geq 1\}$ ordered as in \eqref{eq:26} we have \eqref{eq:29}. In particular, we can define \emph{regularized characteristic function} of $J_F$ by the formula~\eqref{eq:8}. Notice that we also have
\begin{equation} 
    \label{eq:62}
    \frakF_k(z) = \det \big[ E_{k-1} \big( z J_F^{-1} \big) \big], 
\end{equation}
where $\det$ is the \emph{Fredholm determinant}. Recall that since the operator $J_F^{-1}$ is bounded (even compact and self-adjoint) and $E_{k-1}$ is an entire function, the operator $E_{k-1} \big( z J_F^{-1} \big)$ can be defined by e.g. holomorphic functional calculus. Note that the equality between definitions \eqref{eq:8} and \eqref{eq:62} follows from, e.g. \cite[Theorem 9.2(a)]{SimonTrace}.

Let us recall that monic orthogonal polynomials (see \eqref{eq:3}) satisfy
\begin{equation} 
    \label{eq:6}
    p_n(z) = \det(z \Id_n - \calJ_n),
\end{equation}
where $\Id_n$ is $n \times n$ identity matrix and for any $n \geq 1$ the truncation of $\calJ$, $\calJ_n$, is defined by
\[
    \calJ_n = 
     \begin{pmatrix}
     b_0 & a_0 &    &       &\ \\
     a_0 & b_1 & a_1 &        &  \\
        & a_1 & \ddots & \ddots     &  \\
        &    & \ddots & \ddots &  a_{n-2}\\
      &  &   & a_{n-2} & b_{n-1}
    \end{pmatrix}.
\]

Having fixed the notation we can formulate our main result of this section.
\begin{theorem} \label{thm:1}
Suppose that $J$ is positive and $J_F^{-1}$ belongs to the Schatten class $\calS_{k}$ for some $k \in \{ 1, 2, \ldots \}$. 
Then
\begin{equation} 
    \label{eq:9}
    \lim_{n \to \infty}
    \det \Big[ E_{k-1} \big(z \calJ^{-1}_n \big) \Big]
    =
    \frakF_k(z)
\end{equation}
locally uniformly with respect to $z \in \CC$.
\end{theorem}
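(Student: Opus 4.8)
The plan is to reduce the Fredholm determinant of the finite matrix to a finite Weierstrass-type product over reciprocals of the zeros of $p_n$, and then to compare this finite product with the infinite product defining $\frakF_k$. Since $\calJ_n$ is the finite Jacobi matrix appearing in \eqref{eq:6}, whose eigenvalues are precisely the zeros $x_{1;n},\dots,x_{n;n}$ of $p_n$ (see \eqref{eq:11}, \eqref{eq:27}), all of them strictly positive, the matrix $\calJ_n^{-1}$ exists and has eigenvalues $1/x_{j;n}$. As $E_{k-1}$ is entire, the spectral mapping theorem for finite matrices shows that $E_{k-1}(z\calJ_n^{-1})$ has eigenvalues $E_{k-1}(z/x_{j;n})$, so that
\[
    \det\!\big[E_{k-1}(z\calJ_n^{-1})\big] = \prod_{j=1}^n E_{k-1}\!\big(z/x_{j;n}\big).
\]
(For $k=1$ one has $E_0(w)=1-w$ and the right-hand side collapses to $p_n(z)/p_n(0)=P_n(z)/P_n(0)$, so that \eqref{eq:9} specializes to the known limit \eqref{eq:lim-F}.) Thus \eqref{eq:9} amounts to the assertion that $\prod_{j=1}^n E_{k-1}(z/x_{j;n})$ converges locally uniformly to $\frakF_k(z)=\prod_{j=1}^\infty E_{k-1}(z/\zeta_j)$.

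The two facts I would lean on are, first, the standard estimate for the elementary factors \eqref{eq:7}, namely $|E_{k-1}(w)-1|\le|w|^k$ for $|w|\le1$ (see \cite{Conway1978}), and second, Theorem~\ref{thm:3}, which yields both $0<\zeta_j<x_{j;n}$ and $x_{j;n}\to\zeta_j$ as $n\to\infty$. The inequality $\zeta_j<x_{j;n}$ is crucial: it gives $|z/x_{j;n}|\le|z|/\zeta_j$, so that on a fixed disc $|z|\le R$ every factor with $\zeta_j\ge R$ has argument of modulus at most $1$, placing it in the regime where the above estimate applies.

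I would then fix $R>0$, restrict to $|z|\le R$, and for an auxiliary cutoff $M$ (chosen so large that $\zeta_j\ge R$ for all $j>M$, which is possible since $\zeta_j\uparrow\infty$) split the product for $n>M$ into an \emph{early} part $\prod_{j=1}^M$ and a \emph{tail} $\prod_{j=M+1}^n$. The early part converges uniformly to $\prod_{j=1}^M E_{k-1}(z/\zeta_j)$ as $n\to\infty$, being a finite product of factors each converging uniformly on $|z|\le R$ (by $x_{j;n}\to\zeta_j>0$ and continuity of $E_{k-1}$). For the tail, the elementary-factor estimate gives $\sum_{j=M+1}^n|E_{k-1}(z/x_{j;n})-1|\le\sum_{j=M+1}^\infty R^k/\zeta_j^k=:\varepsilon_M$, which is finite by \eqref{eq:29} and tends to $0$ as $M\to\infty$; consequently the tail product stays within $e^{\varepsilon_M}-1$ of $1$, uniformly in $n$, and the same bound controls the infinite tail $\prod_{j=M+1}^\infty E_{k-1}(z/\zeta_j)$ of $\frakF_k$.

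Combining these via the triangle inequality, $\big|\prod_{j=1}^n E_{k-1}(z/x_{j;n})-\frakF_k(z)\big|$ is bounded by the error of the early part (which vanishes as $n\to\infty$ for fixed $M$) plus a uniformly bounded prefactor times $2(e^{\varepsilon_M}-1)$. Taking $\limsup_{n\to\infty}$ for fixed $M$ and then letting $M\to\infty$ closes the argument, the prefactor remaining bounded because $\prod_{j=1}^M E_{k-1}(z/\zeta_j)\to\frakF_k(z)$ uniformly on $|z|\le R$, whence $\sup_{|z|\le R}|\prod_{j=1}^M E_{k-1}(z/\zeta_j)|$ stays bounded in $M$. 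The main obstacle is precisely the tail indices $j$ near $n$: the corresponding zeros $x_{j;n}$ escape to $+\infty$ and cannot be tracked termwise, so a naive factor-by-factor passage to the limit fails. It is the summability \eqref{eq:29} together with the interlacing bound $\zeta_j<x_{j;n}$ from Theorem~\ref{thm:3} that tames this tail uniformly and makes the double-limit argument go through.
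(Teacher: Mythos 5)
Your proposal is correct and follows essentially the same route as the paper's proof: reduce the determinant to the finite product $\prod_{j=1}^n E_{k-1}(z/x_{j;n})$, split off finitely many factors that converge termwise by \eqref{eq:15}, and dominate the tail uniformly in $n$ via the interlacing bound \eqref{eq:14}, the elementary-factor estimate $|E_{k-1}(w)-1|\le|w|^k$, and the summability \eqref{eq:29}. The only cosmetic difference is that the paper invokes a uniform Tannery-type theorem for infinite products where you carry out the $e^{\varepsilon_M}-1$ tail estimate by hand.
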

\begin{proof}
By \eqref{eq:10} for any $n \geq 1$ the matrix $\calJ_n$ is invertible (cf. \eqref{eq:6}) and by \eqref{eq:11}
\begin{equation} 
    \label{eq:12}
    \det 
    \Big[ E_{k-1} \big(z \calJ_n^{-1} \big) \Big]
    = 
    \prod_{j=1}^n E_{k-1} 
    \bigg( \frac{z}{x_{j;n}} \bigg).
\end{equation}
In view of \eqref{eq:8}, in order to prove \eqref{eq:9} we need to prove that
\[
    \lim_{n \to \infty}
    \prod_{j=1}^n E_{k-1} 
    \bigg( \frac{z}{x_{j;n}} \bigg)
    =
    \prod_{j=1}^\infty E_{k-1} 
    \bigg( \frac{z}{\zeta_j} \bigg)
\]
locally uniformly with respect to $z \in \CC$. 

Let us take $R > 0$. 
Notice that by~\eqref{eq:15} for any $j \geq 1$,
\begin{equation} 
    \label{eq:16}
    \lim_{n \to \infty} 
    \frac{z}{x_{j;n}} 
    = 
    \frac{z}{\zeta_j}
\end{equation}
uniformly for $|z| \leq R$. Notice that for any fixed $1 \leq j_0 \leq n$
\[
    \prod_{j=1}^n 
    E_{k-1} \bigg( \frac{z}{x_{j;n}} \bigg)
    =
    \prod_{j=1}^{j_0-1} 
    E_{k-1} \bigg( \frac{z}{x_{j;n}} \bigg)
    \cdot
    \prod_{j=j_0}^n 
    E_{k-1} \bigg( \frac{z}{x_{j;n}} \bigg).
\]
Thus, by continuity of $E_{k-1}$ and \eqref{eq:16} we need to only prove that for some $j_0 \geq 1$
\begin{equation} 
    \label{eq:18}
    \lim_{n \to \infty}
    \prod_{j=j_0}^n 
    E_{k-1} \bigg( \frac{z}{x_{j;n}} \bigg)
    =
    \prod_{j=j_0}^\infty
    E_{k-1} \bigg( \frac{z}{\zeta_j} \bigg)
\end{equation}
uniformly for $|z| \leq R$. Now, let $j_0 \geq 1$ be such that $\tfrac{R}{\lambda_{j}} \leq \tfrac{1}{2}$ for any $j \geq j_0$. By~\eqref{eq:14} we also have $\tfrac{R}{x_{j;n}} \leq \tfrac{1}{2}$ for any $j \geq j_0$ and $n \geq 1$. Since
\[
    |E_{k-1}(w)-1| \leq |w|^k, \quad |w| \leq \frac{1}{2},
\]
see e.g. \cite[Lemma VII.5.11]{Conway1978}, we get by~\eqref{eq:14}
\[
    \bigg| E_{k-1} \bigg( \frac{z}{x_{j;n}} \bigg) -1 \bigg| 
    \leq 
    \bigg( \frac{R}{x_{j;n}} \bigg)^k
    \leq 
    \bigg( \frac{R}{\zeta_j} \bigg)^k.
\]
Since $J^{-1} \in \calS_k$ the last sequence is summable. Thus, in view of uniform Tannery's theorem for infinite products (for a pointwise version see e.g. \cite[Theorem 7.7]{Loya2017}) together with \eqref{eq:16} we get \eqref{eq:18}, what concludes the proof.
\end{proof}

\begin{remark} \label{rem:1}
It is known (see e.g. \cite[Theorem 2.11]{Beckermann2001}) that if $J$ is not self-adjoint, then $J_F^{-1}$ is a Hilbert--Schmidt operator, so in such a case in Theorem~\ref{thm:1} one can take any $k \geq 2$.
\end{remark}

Using \eqref{eq:11} and \eqref{eq:12} one can verify that for any $k \geq 1$,
\begin{equation} \label{eq:54}
	\det \Big[ E_{k-1} \big( z \calJ_n^{-1} \big) \Big] 
	=
	\frac{p_n(z)}{p_n(0)} 
	\exp \bigg( \sum_{j=1}^{k-1} \frac{z^j}{j} \tr \big( \calJ_n^{-j} \big) \bigg).
\end{equation}
In order to compute the value of \eqref{eq:54} in terms of $p_n$ we need the following proposition.
\begin{proposition} \label{prop:1}
Let $n \geq 1$. Define for $1 \leq k \leq n$
\[
	\frakp_k = \tr \big( \calJ_n^{-k} \big), \quad 
	\frake_k = \frac{(-1)^k}{k!} \frac{p_n^{(k)}(0)}{p_n(0)}.
\]
Then we have
\begin{equation} 
	\label{eq:19}
	\frakp_k =
	\sum_{i=1}^{k-1} 
	(-1)^{i-1}
	\frakp_{k-i} 
	\frake_{i} 
	+ (-1)^{k-1} k \frake_k, \quad 1 \leq k \leq n.
\end{equation}
In particular,
\begin{equation} 
	\label{eq:20}
	\frakp_1 = \frake_1, \quad 
	\frakp_2 = \frake_1^2 - 2 \frake_2, \quad
	\frakp_3 = \frake_1^3 - 3 \frake_1 \frake_2 + 3 \frake_3
\end{equation}
\end{proposition}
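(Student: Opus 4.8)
The plan is to recognise that $\frakp_k$ and $\frake_k$ are, respectively, the power sums and the elementary symmetric polynomials in the reciprocals of the zeros of $p_n$, and then to invoke Newton's identities. This reduces the whole proposition to a classical fact about symmetric functions.

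First I would fix the roots. By \eqref{eq:6} the eigenvalues of $\calJ_n$ are exactly the zeros $x_{1;n}, \dots, x_{n;n}$ of $p_n$, which by \eqref{eq:27} are positive and in particular nonzero, so $\calJ_n$ is invertible. Writing $\mu_j := 1/x_{j;n}$ for $j = 1, \dots, n$, the eigenvalues of $\calJ_n^{-k}$ are the numbers $\mu_j^k$, whence
\[
  \frakp_k = \tr\big(\calJ_n^{-k}\big) = \sum_{j=1}^n \mu_j^k ,
\]
that is, $\frakp_k$ is the $k$th power sum of $\mu_1, \dots, \mu_n$. Next I would identify $\frake_k$. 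Dividing the factorisation \eqref{eq:11} by $p_n(0)=\prod_j(-x_{j;n})$ gives
\[
  \frac{p_n(z)}{p_n(0)} = \prod_{j=1}^n (1 - \mu_j z) = \sum_{k=0}^n (-1)^k e_k(\mu)\, z^k ,
\]
where $e_k(\mu)$ denotes the $k$th elementary symmetric polynomial in $\mu_1, \dots, \mu_n$. Comparing this with the Taylor expansion $p_n(z)/p_n(0) = \sum_k \frac{p_n^{(k)}(0)}{k!\,p_n(0)}\, z^k$ and matching the coefficient of $z^k$ yields $\frake_k = e_k(\mu)$. Thus $\frakp_k$ and $\frake_k$ are the power sums and the elementary symmetric polynomials of one and the same finite family $\{\mu_j\}$, and \eqref{eq:19} is precisely the Newton identity relating them.

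To keep the argument self-contained I would derive \eqref{eq:19} from the generating function $f(z) := p_n(z)/p_n(0) = \sum_{k=0}^n(-1)^k\frake_k z^k$. Taking the logarithmic derivative of the product form and expanding each factor as a geometric series gives, for small $|z|$,
\[
  f'(z) = -f(z)\sum_{m=1}^\infty \frakp_m\, z^{m-1}.
\]
Equating the coefficient of $z^{k-1}$ on the two sides for $1 \le k \le n$ (legitimate since $\deg f = n$), isolating the $m=k$ term and reindexing the remaining sum by $i = k-m$ produces exactly \eqref{eq:19}; the displayed special cases \eqref{eq:20} then follow by unwinding the recursion for $k=1,2,3$.

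The structural input — that both sequences are symmetric functions of the same numbers $\mu_j = 1/x_{j;n}$ — is the substantive point, and once it is in hand the rest is standard. Accordingly I do not expect a genuine obstacle: the only care needed is the sign bookkeeping in the coefficient comparison, which is where the bulk of the (entirely routine) work lies.
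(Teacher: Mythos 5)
Your proposal is correct and follows essentially the same route as the paper: both identify $\frakp_k$ and $\frake_k$ as the power sums and elementary symmetric polynomials of the reciprocals $1/x_{j;n}$ of the zeros of $p_n$ (via \eqref{eq:6}, \eqref{eq:11} and the expansion of $p_n(z)/p_n(0)$) and then conclude by Newton's identities. The only cosmetic difference is that the paper cites Newton's identities while you rederive them from the logarithmic derivative of the generating function; this changes nothing substantive.
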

\begin{proof}
Notice that by \eqref{eq:6} and \eqref{eq:11} we have
\begin{equation} 
	\label{eq:21}
	\frakp_k = \sum_{i=1}^n \frac{1}{x_{i;n}^k}.
\end{equation}
Next, by \eqref{eq:11}
\[
	\frac{p_n(z)}{p_n(0)} = 
	\prod_{i=1}^n
	\Big( 1 - \frac{z}{x_{i;n}} \Big).
\]
By induction on $1 \leq k \leq n$ one can prove that
\[
	\bigg[ 
	\prod_{i=1}^n
	\bigg( 1 - \frac{z}{x_{i;n}} \bigg) 
	\bigg]^{(k)}(z)
	=
	\sum_{\substack{I \subset \{1,\ldots,n \}\\ \sharp I = k}}
	\prod_{i \in I} \frac{-1}{x_{i;n}}
	\cdot
	\prod_{i \in \{1,\ldots,n\} \setminus I} 
	\bigg(1 - \frac{z}{x_{i;n}} \bigg).
\]
Thus,
\begin{equation} 
	\label{eq:22}
	\frake_k = 
	\frac{(-1)^k}{k!}
	\frac{p_N^{(k)}(0)}{p_n(0)} = 
	\sum_{1 \leq i_1 < \ldots < i_k \leq n}
	\prod_{j=1}^k \frac{1}{x_{i_j;n}}.
\end{equation}
Now, in view of \eqref{eq:21} and \eqref{eq:22}, the formula \eqref{eq:19} is just Newton's identity, see e.g. \cite{Mead1992}. Then the formula \eqref{eq:20} is an application of \eqref{eq:20}.
\end{proof}

In view of \eqref{eq:54} and Proposition~\ref{prop:1} we can rewrite the 
argument of the exponential on left-hand side of \eqref{eq:9} in terms of $p_n^{(j)}(0)/p_n(0)$. 
In particular, it leads to
\begin{align}
	\label{eq:23}
	\det \Big[ E_{0} \big( z\calJ_n^{-1} \big) \Big] 
	&= \frac{p_n(z)}{p_n(0)}\\
	\label{eq:24}
	\det \Big[ E_{1} \big( z\calJ_n^{-1} \big) \Big] 
	&= \frac{p_n(z)}{p_n(0)} 
	\exp \bigg( -z \frac{p_n'(0)}{p_n(0)} \bigg) \\
	\label{eq:25}
	\det \Big[ E_{2} \big( z\calJ_n^{-1} \big) \Big] 
	&= \frac{p_n(z)}{p_n(0)}
	\exp 
	\bigg( 
	-z \frac{p_n'(0)}{p_n(0)} + 
	\frac{z^2}{2} \bigg\{\bigg( \frac{p_n'(0)}{p_n(0)} \bigg)^2 - 
	\frac{p_n''(0)}{p_n(0)} \bigg\} 
	\bigg).
\end{align}

Let us recall that in \cite[Section 3]{Stovicek2020} the following particular case was proven. 
\begin{corollary} \label{cor:1}
Suppose that $J$ is positive and $J^{-1}$ is a trace class operator. Then
\begin{equation} \label{eq:49}
	\lim_{n \to \infty} \frac{p_n(z)}{p_n(0)} = \frakF_1(z)
\end{equation}
locally uniformly with respect to $z \in \CC$.
\end{corollary}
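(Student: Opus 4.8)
The plan is to obtain this as the special case $k=1$ of Theorem~\ref{thm:1}. First I would observe that the hypothesis that $J^{-1}$ is trace class already forces $J$ to be self-adjoint: a symmetric operator possessing an everywhere-defined bounded inverse is automatically self-adjoint, and a trace class operator is in particular bounded and everywhere defined. Consequently $J=J_F$ and $J_F^{-1}=J^{-1}\in\calS_1$, so the assumptions of Theorem~\ref{thm:1} are met with $k=1$.

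Next I would unwind the two sides of \eqref{eq:9} for this value of $k$. On the one hand, the defining formula \eqref{eq:8} with $k=1$ gives $\frakF_1(z)=\prod_{i=1}^\infty E_0(z/\zeta_i)=\prod_{i=1}^\infty(1-z/\zeta_i)$, since the exponential correction in the elementary factor \eqref{eq:7} is an empty product for $n=0$. On the other hand, $E_0(w)=1-w$ yields $\det[E_0(z\calJ_n^{-1})]=\det(\Id_n-z\calJ_n^{-1})$, and by \eqref{eq:23} this determinant equals $p_n(z)/p_n(0)$. Substituting both identifications into the conclusion \eqref{eq:9} of Theorem~\ref{thm:1} produces exactly \eqref{eq:49}, with the locally uniform convergence on $\CC$ inherited verbatim.

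Since the whole argument is a direct specialization, there is no genuine obstacle; the only point deserving a word of care is the self-adjointness reduction $J=J_F$, which is what lets one invoke Theorem~\ref{thm:1} (phrased in terms of $J_F^{-1}$) under a hypothesis stated in terms of $J^{-1}$. Everything else is the bookkeeping of reading off $E_0$ and applying \eqref{eq:23}.
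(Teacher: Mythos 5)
Your proposal is correct and follows exactly the paper's own route: the paper's proof is precisely ``apply Theorem~\ref{thm:1} with $k=1$ and use \eqref{eq:23}.'' Your additional observation that a trace class (hence bounded, everywhere defined) inverse forces $J$ to be self-adjoint, so that $J=J_F$ and Theorem~\ref{thm:1} applies, is a valid point that the paper leaves implicit.
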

\begin{proof}
Apply Theorem~\ref{thm:1} with $k=1$ and use \eqref{eq:23}.
\end{proof}

\begin{remark} 
Theorem~\ref{thm:3} together with \cite[Theorem 2]{Chihara1972} imply that if the operator $J$ is positive with $J_F^{-1}$ compact, then the the left-hand side of \eqref{eq:49} is convergent locally uniformly on $\CC$ to an entire function with the zero set $\sigma(J_F)$ iff $J_F^{-1}$ is trace class. In \cite{Chihara1972} the right-hand side of \eqref{eq:49} was not identified.
\end{remark}

\subsection{Importance of the positivity} \label{sec:pos}
Suppose that $J$ is a self-adjoint Jacobi matrix with $b_n \equiv 0$. 
Let $(P_n : n \geq 0)$ be the corresponding orthonormal polynomials and let $\mu$ be their unique orthonormality measure, i.e. defined in \eqref{eq:measureOrth}. Then it is well-known that the measure $\mu$ is symmetric around $0$ and (see e.g. formulas (1.6)--(1.9) in \cite{Chihara1982}) that
\[
	P_{2n}(z) = \tilde{P}_n(z^2), \quad 
	P_{2n+1}(z) = z \hat{P}_n(z^2), \quad n \geq 0,
\]
where the sequences $(\tilde{P}_n : n \geq 0), (\hat{P}_n : n \geq 0)$ are orthonormal with respect to the measures
\begin{equation} \label{eq:47}
	\tilde{\mu}(X) = 
	\mu
	\big(
		\{ \pm \sqrt{x} : x \in X \cap [0, \infty) \}
	\big) 
	\quad \text{and} \quad
	\hat{\mu}(X) = \int_X x \ud \tilde{\mu}(x),
\end{equation}
respectively. 

Suppose that $J^{-1}$ is a Hilbert--Schmidt operator, then
\[
	\sum_{j=1}^\infty \frac{1}{\zeta_j^2} < \infty
\]
and its $2$-regularized characteristic function is equal to
\[
	\frakF_2(z) = 
	\prod_{j=1}^\infty
	\bigg[
	\bigg( 1 - \frac{z}{\zeta_j} \bigg) 
	\exp \Big( \frac{z}{\zeta_j} \Big)
	\bigg]
	\bigg[
	\bigg( 1 - \frac{z}{-\zeta_j} \bigg) 
	\exp \Big( \frac{z}{-\zeta_j} \Big)
	\bigg]
	= 
	\prod_{j=1}^\infty 
	\bigg( 1 - \frac{z^2}{\zeta_j^2} \bigg).
\]
Note that if $\supp \mu = \{ \pm \zeta_j : j \geq 1 \}$, where $0 < \zeta_1 < \zeta_2 < \ldots$ then 
by~\eqref{eq:47} $\supp \tilde{\mu} = \supp \hat{\mu} = \{ \zeta_j^2 : j \geq 1 \}$. 
Consequently, their characteristic functions are equal to
\[
	\tilde{\frakF}_1(z) = \hat{\frakF}_1(z) = 
	\prod_{j=1}^\infty \bigg( 1 - \frac{z}{\zeta_j^2} \bigg).
\]

Suppose that both $\tilde{\mu}$ and $\hat{\mu}$ are determinate (in Example~\ref{ex:1} we show that it is possible). 
Then the corresponding Jacobi matrices $\tilde{J}, \hat{J}$ are positive and their inverses belong to the trace class. 
Thus, by Corollary~\ref{cor:1}, for any fixed $w \in \CC_+$
\[
	\frac{P_{2n}(z)}{P_{2n}(w)} = 
	\frac{\tilde{P}_n(z^2)}{\tilde{P}_n(w^2)} \rightarrow 
	\frac{\tilde{\frakF}_1(z^2)}{\tilde{\frakF}_1(w^2)} =
	\frac{\frakF_2(z)}{\frakF_2(w)}
\]
and 
\[
	\frac{P_{2n+1}(z)}{P_{2n+1}(w)} = 
	\frac{z \hat{P}_n(z^2)}{w \hat{P}_n(w^2)} \rightarrow 
	\frac{z}{w}
	\frac{\hat{\frakF}_1(z^2)}{\hat{\frakF}_1(w^2)} =
	\frac{z}{w} \frac{\frakF_2(z)}{\frakF_2(w)},
\]
Consequently, we have
\begin{equation} 
	\label{eq:48}
	\lim_{n \to \infty}
	\frac{P_{2n}(z)}{P_{2n}(w)} =
	\frac{\frakF_2(z)}{\frakF_2(w)}
	\quad \text{whereas} \quad
	\lim_{n \to \infty}
	\frac{P_{2n+1}(z)}{P_{2n+1}(w)} =
	\frac{z}{w} \frac{\frakF_2(z)}{\frakF_2(w)},
\end{equation}
which are not equal when $z,w \in \CC_+$ are such that $z \neq w$.

\begin{example}[Symmetric Al-Salam--Carlitz II polynomials] \label{ex:1}
Let $a>0$ and $0<q<1$. Let $J$ be the Jacobi matrix with Jacobi parameters
\[
	a_{2n} = \sqrt{a q^{-n}}, \quad 
	a_{2n+1} = \sqrt{q^{-n-1} - 1}, \quad
	b_n \equiv 0, \qquad n \geq 0.
\]
According to \cite[p. 342]{Chihara1982} all of the measures $\mu, \tilde{\mu}, \hat{\mu}$ are determinate iff $aq^2 \geq 1$. Then, $\tilde{J} > 0$ and $\tilde{J}^{-1}$ is trace class. Therefore, in view of \eqref{eq:47}, the operator $J^{-1}$ is Hilbert--Schmidt. Thus, it falls into the setup of this subsection, and consequently, we have~\eqref{eq:48}. 
\end{example}

\section{The modified $q$-Laguerre polynomials} \label{sec:6}

\subsection{Preliminaries} \label{sec:6:1}

In this section we introduce and study the so called modified $q$-Laguerre polynomials.
In contrast to the well established standard $q$-Laguerre polynomials (see~\cite{Moak1981})
they are distinguished by the fact that the respective Hamburger moment problem is determinate.
Furthermore, they correspond to a Birth-Death process and they are derived from
the standard $q$-Laguerre polynomials by a construction based on the notion
of quasi-orthogonal sequence (see~\cite{Bracciali2018}).

Everywhere in this section we assume that $q\in(0,1)$ is a fixed parameter. The notation concerning $q$-symbols and $q$-functions used in this section is in compliance with that introduced in \cite{Gasper2004} which now seems to be widely accepted as standard. The same notation can be also found in \cite{Koekoek2010}. In particular let us recall that the
$q$-Pochhammer symbol, $(a;q)_n$, is defined for any $a \in \CC$ and
$n \in \NN_0 \cup \{\infty\}$ by
\[
    (a;q)_n = \prod_{k=0}^{n-1} (1-aq^k).
\]
Furthermore, the basic hypergeometric function $\,_{r}\phi_{s}$ is defined by the series
\[
   \rphis{r}{s}{a_1,\ldots,a_r}{b_1,\ldots,b_s}{q,z}
   = \sum_{k=0}^\infty
   \frac{(a_1;q)_k\cdots (a_r;q)_k}{(b_1;q)_k\cdots (b_s;q)_k}\,
   (-1)^{(1+s-r)k}q^{(1+s-r)\binom{k}{2}}\,
   \frac{z^k}{(q;q)_k}\,.
\]

The (standard) $q$-Laguerre polynomials are defined by the three-term
recurrence relation (see e.g. \cite[formula (14.21.5)]{Koekoek2010})
\begin{multline*}
	(1-q^{n+1}) L_{n+1}^{(\alpha)}(x;q)
	-
	\big(
	1-q^{n+1}+q (1-q^{n+\alpha})
	\big) L_{n}^{(\alpha)}(x;q) \\
	+
	q (1-q^{n+\alpha}) L_{n-1}^{(\alpha)}(x;q)
	=
	-q^{2n+\alpha+1}xL_{n}^{(\alpha)}(x;q), \quad n \geq 0,
\end{multline*}
with $L_{-1}^{(\alpha)}(x;q)=0$, $L_{0}^{(\alpha)}(x;q) = 1$, and
$\alpha > -1$ being a parameter. In this section we always assume that $-1<q<1$ is a parameter. The normalized recurrence relation
for the respective monic polynomials $p_{n}^{(\alpha)}(x;q)$ then
reads
\begin{multline*}
	p_{n+1}^{(\alpha)}(x;q) 
	+ 
	q^{-2n-\alpha-1} 
	\big(
	1-q^{n+1}
	+
	q (1-q^{n+\alpha})
	\big)
	p_{n}^{(\alpha)}(x;q) \\
	+
	q^{-4n-2\alpha+1} 
	(1-q^{n}) (1-q^{n+\alpha})
	p_{n-1}^{(\alpha)}(x;q)
	= x p_{n}^{(\alpha)}(x;q),
\end{multline*}
where
\begin{equation}
	\label{eq:qLaguerre-monic}
	L_{n}^{(\alpha)}(x;q) 
	= 
	\frac{(-1)^{n} q^{n\,(n+\alpha)}}{(q;q)_{n}}
	p_{n}^{(\alpha)}(x;q).
\end{equation}

Explicitly, see \cite[Sec. 14.21]{Koekoek2010},
\begin{align}
	\nonumber
	L_{n}^{(\alpha)}(x;q) 
	&=
	\frac{(q^{\alpha+1};q)_{n}}{(q;q)_{n}} 
	\rphis{1}{1}{q^{-n}}{q^{\alpha+1}}{q, -q^{n+\alpha+1} x} \\
	\label{eq:qLaguerre}
	&=
	\frac{(q^{\alpha+1};q)_{n}}{(q;q)_{n}}
	\sum_{k=0}^{n}
	\frac{(q^{-n};q)_{k}}{(q^{\alpha+1};q)_{k} (q;q)_{k}}
	q^{k(k-1)/2} (q^{n+\alpha+1} x)^{k},
\end{align}
and
\begin{equation}
	\label{eq:qLaguerre-to-monic}
	p_{n}^{(\alpha)}(x;q)
	=
	(-1)^{n} q^{-n(n + \alpha)}
	(q^{\alpha+1};q)_{n} 
	\rphis{1}{1}{q^{-n}}{q^{\alpha+1}}{q,-q^{n+\alpha+1} x}.
\end{equation}

The following relation derived in~\cite{Moak1981} will be of importance
in the sequel,
\[
	q^{n}L_{n}^{(\alpha)}(x;q) 
	+ 
	L_{n-1}^{(\alpha+1)}(x;q)
	-
	L_{n}^{(\alpha+1)}(x;q) = 0, \quad n \geq 0.
\]
In regard of \eqref{eq:qLaguerre-monic}, for the monic $q$-Laguerre
polynomials this means that
\begin{equation}
	\label{eq:p-monic-id-a}
	p_{n}^{(\alpha)}(x;q) 
	= 
	p_{n}^{(\alpha+1)}(x;q)
	+
	q^{-2n-\alpha} (1-q^{n})
	p_{n-1}^{(\alpha+1)}(x;q), \quad n \geq 0.
\end{equation}

The parameters $a_{n}^{(\alpha)}$, $b_{n}^{(\alpha)}$ in the corresponding
Jacobi matrix are given by
\[
	\big(
	a_{n}^{(\alpha)}
	\big)^{2}
	=
	q^{-4n-2\alpha-3} (1-q^{n+1})(1-q^{n+\alpha+1}), \quad
	b_{n}^{(\alpha)}
	=
	q^{-2n-\alpha-1}
	\big(
	1-q^{n+1} + q (1-q^{n+\alpha})
	\big),
\]
$n \geq 0$. We put, too,
\[
	d_{n}^{(\alpha)} 
	= 
	\big(
	a_{n-1}^{(\alpha)}
	\big)^{2}
	=
	q^{-4n-2\alpha+1}
	(1-q^{n})(1 - q^{n+\alpha}), \quad n \geq 1,
\]
so that the normalized three-term recurrence relation reads
\[
	p_{n+1}^{(\alpha)}(x;q)
	=
	(x - b_{n}^{(\alpha)})
	p_{n}^{(\alpha)}(x;q)
	-
	d_{n}^{(\alpha)} 
	p_{n-1}^{(\alpha)}(x;q).
\]
It turns out that we are dealing with a Jacobi parameters of the form~\eqref{eq:34}, i.e.
\[
	\big( 
	a_{n}^{(\alpha)} 
	\big)^{2} 
	= 
	\lambda_{n}^{(\alpha)} \mu_{n+1}^{(\alpha)}, \quad
	b_{n}^{(\alpha)} 
	= 
	\lambda_{n}^{(\alpha)} + \mu_{n}^{(\alpha)},
\]
where
\[
	\lambda_{n}^{(\alpha)}
	=
	q^{-2n-\alpha-1}(1-q^{n+1}),\quad 
	\mu_{n}^{(\alpha)}=q^{-2n-\alpha}(1-q^{n+\alpha}).
\]

The Hamburger moment problem for the $q$-Laguerre polynomials is
known to be indeterminate. A detailed discussion
of the $N$-extremal orthogonality measures can be found in~\cite{Moak1981}.

The notion of the so-called quasi-orthogonal polynomials of order
$k \in \NN$ turns out to be quite useful for our purposes although
only the order $k = 2$ will be needed in the current paper. Here we
recall a basic result which is a particular case of Theorem 2.1 in~\cite{Bracciali2018}.

\begin{theorem}\label{thm:BMV} 
Let $(p_{n}(x) : n \geq 0)$ be a
monic orthogonal polynomial sequence obeying a three-term recurrence
relation
\[
	p_{n+1}(x)
	=
	(x - b_{n}) p_{n}(x)
	-
	d_{n} p_{n-1}(x) \quad \text{for } n \geq 0,
\]
with $p_{-1}(x) = 0$ (hence, necessarily, $b_{n} \in \RR$ for
$n \geq 0$ and $d_{n} > 0$ for $n \geq 1$). Let $(f_{n} : n \geq 0)$
be a real sequence, with $f_{0} = 0$ and $f_{n} \neq 0$ for $n \geq 1$.
Let us define
\[
	\tilde{p}_{n}(x)
	=
	p_{n}(x)
	+
	f_{n} p_{n-1}(x) \quad \text{for } n \geq 0,
\]
and
\begin{align*}
	\tilde{b}_{n} &= b_{n} + f_{n} - f_{n+1} 
	\quad \text{for } n \geq 0,\\
	\tilde{d}_{n} &= d_{n} + f_{n} (b_{n-1} - b_{n} - f_{n} + f_{n+1})
	\quad \text{for } n \geq 1.
\end{align*}
Then $(\tilde{p}_{n}(x) : n \geq 0)$ is a monic orthogonal polynomial
sequence if and only if
\begin{equation}
	\label{eq:gen-quasi-ortho}
	f_{n+1} 
	= 
	f_{n} + b_{n} - b_{n-1} 
	-\frac{d_{n}}{f_{n}} + \frac{d_{n-1}}{f_{n-1}} \quad \text{for } n \geq 2,
\end{equation}
and $\tilde{d}_{n} > 0$ for $n \geq 1$. If so then $(\tilde{p}_{n}(x) : n \geq 0)$
obeys the three-term recurrence relation
\[
	\tilde{p}_{n+1}(x)
	=
	(x - \tilde{b}_{n}) \tilde{p}_{n}(x)
	-
	\tilde{d}_{n} \tilde{p}_{n-1}(x) \quad \text{for } n \geq 0,
\]
with $\tilde{p}_{-1}(x) = 0$. Moreover, the coefficients $\tilde{d}_{n}$
also satisfy
\[
	\tilde{d}_{n}
	=
	\frac{f_{n}}{f_{n-1}} d_{n-1},\quad n \geq 2.
\]
\end{theorem}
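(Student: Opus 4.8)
The plan is to read the statement through Favard's theorem: a sequence of monic polynomials $(\tilde p_n)$ with $\deg\tilde p_n=n$ is a monic orthogonal polynomial sequence if and only if it satisfies a three-term recurrence $\tilde p_{n+1}=(x-\beta_n)\tilde p_n-\gamma_n\tilde p_{n-1}$ with $\beta_n\in\RR$ and $\gamma_n>0$ for all $n\geq1$ (Favard's theorem; see e.g. \cite[Theorem I.4.4]{Chihara1978}). First I would note that, since $p_n$ is monic of degree $n$ while $f_np_{n-1}$ has degree at most $n-1$, each $\tilde p_n=p_n+f_np_{n-1}$ is automatically monic of degree $n$, with $\tilde p_0=1$ and $\tilde p_{-1}=0$. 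The entire proof then reduces to one algebraic identity.

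The key computation is to evaluate $(x-\tilde b_n)\tilde p_n-\tilde d_n\tilde p_{n-1}$ explicitly. Substituting $\tilde p_n=p_n+f_np_{n-1}$ and $\tilde p_{n-1}=p_{n-1}+f_{n-1}p_{n-2}$ and eliminating $xp_m$ via the recurrence $xp_m=p_{m+1}+b_mp_m+d_mp_{m-1}$, I would expand the result in the basis $p_{n+1},p_n,p_{n-1},p_{n-2}$. A short calculation shows that the coefficient of $p_n$ is $b_n+f_n-\tilde b_n$ and the coefficient of $p_{n-1}$ is $d_n+f_n(b_{n-1}-\tilde b_n)-\tilde d_n$; the definitions $\tilde b_n=b_n+f_n-f_{n+1}$ and $\tilde d_n=d_n+f_n(b_{n-1}-b_n-f_n+f_{n+1})$ are precisely designed so that these collapse to $f_{n+1}$ and $0$. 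Recalling that $\tilde p_{n+1}=p_{n+1}+f_{n+1}p_n$, one is left with
\[
  (x-\tilde b_n)\tilde p_n-\tilde d_n\tilde p_{n-1}
  =\tilde p_{n+1}+\bigl(f_nd_{n-1}-\tilde d_nf_{n-1}\bigr)p_{n-2}.
\]
Thus, with this choice of $\tilde b_n,\tilde d_n$, the three-term recurrence for $(\tilde p_n)$ holds up to a single residual term proportional to $p_{n-2}$. For $n\in\{0,1\}$ this residual is absent (the $n=0$ instance is verified directly using $f_0=0$, and for $n=1$ one has $p_{-1}=0$); for $n\geq2$, where $p_{n-2}\not\equiv0$, it vanishes exactly when $\tilde d_nf_{n-1}=f_nd_{n-1}$, i.e. $\tilde d_n=\tfrac{f_n}{f_{n-1}}d_{n-1}$, which upon solving for $f_{n+1}$ is equivalent to the relation \eqref{eq:gen-quasi-ortho}.

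Both implications now follow quickly. If \eqref{eq:gen-quasi-ortho} holds and $\tilde d_n>0$ for $n\geq1$, the residual vanishes identically, so $(\tilde p_n)$ obeys the displayed three-term recurrence with real $\tilde b_n$ and positive $\tilde d_n$, and Favard's theorem makes it a monic orthogonal polynomial sequence; the recurrence and the identity $\tilde d_n=\tfrac{f_n}{f_{n-1}}d_{n-1}$ are then exactly what was just derived. Conversely, if $(\tilde p_n)$ is a monic orthogonal polynomial sequence, it satisfies some recurrence $\tilde p_{n+1}=(x-\beta_n)\tilde p_n-\gamma_n\tilde p_{n-1}$ with $\gamma_n>0$; subtracting this from the identity above gives
\[
  (\beta_n-\tilde b_n)\tilde p_n+(\gamma_n-\tilde d_n)\tilde p_{n-1}
  =\bigl(f_nd_{n-1}-\tilde d_nf_{n-1}\bigr)p_{n-2},
\]
and since the right-hand side has degree at most $n-2$, comparison of the coefficients of $x^n$ and $x^{n-1}$ forces $\beta_n=\tilde b_n$ and $\gamma_n=\tilde d_n>0$. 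The residual therefore vanishes, yielding \eqref{eq:gen-quasi-ortho} for $n\geq2$.

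The only genuine work is the bookkeeping in the central expansion; the point to get right is tracking the $p_{n-2}$ level, since everything at the $p_n$ and $p_{n-1}$ levels is annihilated by construction and it is the $p_{n-2}$ residual alone that carries the orthogonality condition. The remaining care is with the degenerate indices $n=0,1$, where $p_{-1}=0$ removes the residual without imposing a constraint, consistent with \eqref{eq:gen-quasi-ortho} being required only for $n\geq2$.
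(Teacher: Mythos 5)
Your proof is correct. Note that the paper itself does not prove this statement: Theorem~\ref{thm:BMV} is quoted as a particular case of Theorem~2.1 of Bracciali--Marcell\'an--Varma \cite{Bracciali2018}, so there is no in-paper argument to compare against. Your derivation is a clean, self-contained verification: the expansion of $(x-\tilde b_n)\tilde p_n-\tilde d_n\tilde p_{n-1}$ in the basis $p_{n+1},p_n,p_{n-1},p_{n-2}$ is computed correctly (the $p_n$- and $p_{n-1}$-coefficients do collapse to $f_{n+1}$ and $0$ by the definitions of $\tilde b_n$ and $\tilde d_n$, leaving the residual $(f_nd_{n-1}-\tilde d_nf_{n-1})p_{n-2}$), the equivalence of the vanishing of that residual with \eqref{eq:gen-quasi-ortho} follows on dividing by $f_n\neq 0$, the degenerate cases $n=0,1$ are handled properly via $f_0=0$ and $p_{-1}=0$, and both directions are closed correctly through Favard's theorem together with the degree comparison that forces $\beta_n=\tilde b_n$ and $\gamma_n=\tilde d_n$ in the converse. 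This is essentially the argument one finds in the cited source, so nothing further is needed.
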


\begin{remark} 
The polynomials $\tilde{p}_{n}(x)$, as defined in
Theorem~\ref{thm:BMV}, are called \emph{quasi-orthogonal polynomials
of order $2$}. This construction can be generalized to any order
$k \in \NN$, however, as explained in \cite{Bracciali2018}.
The case $k = 1$ is trivial as it means that $\tilde{p}_{n}(x) = p_{n}(x)$
for all $n \geq 0$. The particular case of order $2$ has already
been explored earlier in \cite{Maroni1990}. 
\end{remark}

Further we recall a theorem which is a particular case of Theorem~2.2 in \cite{Bracciali2018}, with some details taken from its proof ibidem.

\begin{theorem} \label{thm:BMV2} 
Suppose that the quasi-orthogonal
polynomial sequence $(\tilde{p}_{n}(x) : n \geq 0)$, as defined in Theorem~\ref{thm:BMV}, is in fact a monic orthogonal polynomial sequence.
                                                               
Denote by $\Lambda$ and $\tilde{\Lambda}$ their moment functionals,
and by $(M_{n} : n \geq 0)$ and $(\tilde{M}_{n} : n \geq 0)$ the
moment sequences associated with the polynomial sequences $(p_{n}(x) : n \geq 0)$
and $(\tilde{p}_{n}(x) : n \geq 0)$, respectively. Using the notation of Theorem~\ref{thm:BMV} let                       
\[
	h(x)
	=
	1 - \frac{f_{1} \tilde{b}_{0}}{\tilde{d}_{1}}
	+
	\frac{f_{1}}{\tilde{d}_{1}} x.
\]
Then for every polynomial $f$,
\[
	\Lambda(f) = \tilde{\Lambda}(hf).
\]
Consequently,
\begin{equation}
	\label{eq:Mtld-recur}
	\forall n \geq 0,\ 
	\tilde{M}_{n+1} 
	= 
	\frac{\tilde{d}_{1}}{f_{1}} M_{n}
	+
	\bigg(
	\tilde{b}_{0}-\frac{\tilde{d}_{1}}{f_{1}}
	\bigg) \tilde{M}_{n}.
\end{equation}
\end{theorem}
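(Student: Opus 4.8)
The plan is to first establish the functional identity $\Lambda(f)=\tilde\Lambda(hf)$ and then read off \eqref{eq:Mtld-recur} as the special case $f=x^n$. For the identity, since $\deg p_n=n$ the monic sequence $(p_n)$ is a vector-space basis of the polynomials, so by linearity it suffices to verify $\Lambda(p_n)=\tilde\Lambda(h\,p_n)$ for every $n\ge 0$. Recall that $\Lambda(p_n)=0$ for $n\ge1$ (orthogonality of $(p_n)$ against $p_0\equiv1$) while $\Lambda(p_0)=M_0$; after normalizing both functionals so that $M_0=\tilde M_0$ (this is forced by the claim at $f=1$, and a monic orthogonal sequence fixes its functional only up to a positive scalar), the case $n=0$ amounts to $\tilde\Lambda(h)=\tilde M_0$, and the real content is to show $\tilde\Lambda(h\,p_n)=0$ for all $n\ge1$.

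Second, I would compute the auxiliary quantities $g_m:=\tilde\Lambda(p_m)$. Writing $p_m=\tilde p_m-f_m p_{m-1}$ and using $\tilde\Lambda(\tilde p_m)=0$ for $m\ge1$ (orthogonality of $(\tilde p_m)$ against $\tilde p_0\equiv1$) gives the one-term recursion $g_m=-f_m g_{m-1}$ with $g_0=\tilde M_0$. Next I rewrite $h$ conveniently: from its definition $h(x)=1+\tfrac{f_1}{\tilde d_1}\,(x-\tilde b_0)$, and since $\tilde p_1(x)=x-\tilde b_0$, multiplying the three-term recurrence $x p_n=p_{n+1}+b_n p_n+d_n p_{n-1}$ by $h$ yields
\[
  h\,p_n=p_n+\frac{f_1}{\tilde d_1}\bigl(p_{n+1}+(b_n-\tilde b_0)p_n+d_n p_{n-1}\bigr).
\]
Applying $\tilde\Lambda$ and inserting $g_{n+1}=-f_{n+1}g_n$ and $g_{n-1}=-g_n/f_n$ collapses everything to $\tilde\Lambda(h\,p_n)=g_n\bigl\{1+\tfrac{f_1}{\tilde d_1}\bigl(b_n-\tilde b_0-f_{n+1}-d_n/f_n\bigr)\bigr\}$, valid for $n\ge1$ (the endpoint $n=1$ uses $g_{n-1}=g_0$, consistent with $g_1=-f_1g_0$).

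Third — and this is the crux — I would show the brace vanishes for every $n\ge1$. Substituting $\tilde b_0=b_0-f_1$ and $\tilde d_1/f_1=d_1/f_1+b_0-b_1-f_1+f_2$, both taken from Theorem~\ref{thm:BMV}, the braced identity is equivalent to $f_{n+1}-b_n+d_n/f_n=f_2-b_1+d_1/f_1$, i.e.\ to the assertion that $c_n:=f_{n+1}+d_n/f_n-b_n$ is independent of $n$. But this is precisely a rearrangement of the defining relation \eqref{eq:gen-quasi-ortho}, which says exactly $c_n=c_{n-1}$ for $n\ge2$; together with the trivial base case this gives $c_n\equiv c_1$ for all $n\ge1$ and closes the argument (note $g_n\neq0$ because $\tilde M_0\neq0$ and every $f_j\neq0$). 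I expect the only delicate bookkeeping here to be keeping $\tilde b_0,\tilde d_1$ consistently expressed through $b,d,f$ and checking the $n=1$ endpoint.

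Finally, once $\Lambda(f)=\tilde\Lambda(hf)$ is established, the moment recurrence follows at once. Taking $f=x^n$ and using $h\,x^n=\bigl(1-\tfrac{f_1\tilde b_0}{\tilde d_1}\bigr)x^n+\tfrac{f_1}{\tilde d_1}x^{n+1}$ gives $M_n=\bigl(1-\tfrac{f_1\tilde b_0}{\tilde d_1}\bigr)\tilde M_n+\tfrac{f_1}{\tilde d_1}\tilde M_{n+1}$; solving for $\tilde M_{n+1}$ produces exactly $\tilde M_{n+1}=\tfrac{\tilde d_1}{f_1}M_n+\bigl(\tilde b_0-\tfrac{\tilde d_1}{f_1}\bigr)\tilde M_n$, which is \eqref{eq:Mtld-recur}.
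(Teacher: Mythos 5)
Your argument is correct. Note, however, that the paper does not actually prove this statement at all: Theorem~\ref{thm:BMV2} is imported verbatim as a particular case of Theorem~2.2 of Bracciali--Marcell\'an--Varma, so your proposal supplies a self-contained proof where the paper defers to the literature. Your route is a clean one: testing the identity $\Lambda(f)=\tilde\Lambda(hf)$ on the basis $(p_n)$, reducing everything to the auxiliary sequence $g_m=\tilde\Lambda(p_m)$ with its one-term recursion $g_m=-f_mg_{m-1}$, and observing that the vanishing of the bracket is exactly the statement that $c_n=f_{n+1}+d_n/f_n-b_n$ is constant in $n$ --- which is a verbatim rearrangement of \eqref{eq:gen-quasi-ortho}, with the base case $n=1$ being precisely the definition of $\tilde d_1$. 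I checked the computations: $h(x)=1+\tfrac{f_1}{\tilde d_1}(x-\tilde b_0)$, the three-term recurrence step, the substitutions $g_{n+1}=-f_{n+1}g_n$ and $g_{n-1}=-g_n/f_n$ (consistent at $n=1$), and the final algebra yielding \eqref{eq:Mtld-recur} from $f=x^n$ are all sound. The only point worth making explicit is the normalization $M_0=\tilde M_0$, which you correctly identify as forced by the $f=1$ case (using $\tilde M_1=\tilde b_0\tilde M_0$, itself a consequence of $\tilde\Lambda(\tilde p_1)=0$); under the standard convention that both functionals are normalized to $1$ this is automatic. The remark that $g_n\neq 0$ is harmless but not actually needed, since the conclusion $\tilde\Lambda(hp_n)=0$ follows from the vanishing of the bracket regardless.
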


\subsection{Definition} \label{sec:6:2}

Let
\[
	\gamma(u,v;q)
	=
	\frac{(q^{u};q)_{\infty}}{(q^{u+v};q)_{\infty}}.
\]
In particular, for $v>0$,
\[
	\gamma(0,v;q) = 0.
\]
For $m \in \NN_{0}$ we have $\gamma(u,m;q) = (q^{u};q)_{m}$.
In regard to the $q$-Binomial Theorem telling us that \cite[Eq. (II.3)]{Gasper2004}
\begin{equation}
	\label{eq:q-Binom}
	\rphis{1}{0}{a}{-}{q,z}
	=
	\frac{(az;q)_{\infty}}{(z;q)_{\infty}}, \quad |z|<1,
\end{equation}
we also have, for $u > 0$ and $u+v > 0$,
\begin{equation}
	\label{eq:gamma-phi}
	\gamma(u,v;q)
	=
	\frac{1}{\rphis{1}{0}{q^v}{-}{q, q^u}}
	=
	\rphis{1}{0}{q^{-v}}{-}{q, q^{u+v}}.
\end{equation}
Furthermore, one can rewrite the function $\gamma(u,v;q)$ in terms
of the $q$-Gamma function. Recall that (\cite[Eq. (I.35)]{Gasper2004}
or \cite[Eq. (1.9.1)]{Koekoek2010})
\[
	\Gamma_{q}(z)
	=
	\frac{(q;q)_{\infty}}{(q^{z};q)_{\infty}} (1-q)^{1-z}.
\]
We have
\begin{equation}
	\label{eq:gamma-Gamma}
	\gamma(u,v;q)
	=
	\frac{\Gamma_{q}(u+v)}{\Gamma_{q}(u)} (1-q)^{v}.
\end{equation}

For later convenience, we show some identities for function $\gamma(u,v;q)$.
\begin{lemma} \label{thm:gamma}
For $m\in\mathbb{N}_0$ and $\beta>-1$,
\[
	\sum_{k=0}^{m-1}
	q^{k} \gamma(k+1,\beta;q) 
	= 
	\frac{\gamma(m,\beta+1;q)}{1 - q^{\beta+1}}
\]
and
\begin{equation}
	\label{eq:sum-gamma-infty}
	\sum_{k=0}^{\infty}
	q^{k} \gamma(k+1,\beta;q) 
	=
	\frac{1}{1-q^{\beta+1}}.
\end{equation}
In particular, for $\ell \in \NN_0$,
\begin{equation}
	\label{eq:q-sup-k-sum-finite}
	\sum_{k=0}^{m-1}
	q^{k} (q^{k+1};q)_{\ell} 
	= 
	\frac{(q^{m};q)_{\ell+1}}{1-q^{\ell+1}}            
\end{equation}
and
\begin{equation}
	\label{eq:q-sup-k-sum}
	\sum_{k=0}^{\infty}
	q^{k} (q^{k+1};q)_{\ell}
	=
	\frac{1}{1-q^{\ell+1}}.
\end{equation}
\end{lemma}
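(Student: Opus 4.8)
The plan is to establish the two general identities for arbitrary $\beta>-1$ first, and then read off the two ``in particular'' identities as immediate specializations. Indeed, since $\gamma(u,\ell;q)=(q^{u};q)_{\ell}$ for every $\ell\in\NN_0$, setting $\beta=\ell$ turns $\gamma(k+1,\beta;q)$ into $(q^{k+1};q)_{\ell}$ and $\gamma(m,\beta+1;q)$ into $(q^{m};q)_{\ell+1}$, so \eqref{eq:q-sup-k-sum-finite} and \eqref{eq:q-sup-k-sum} follow at once from the first two displays. Thus it suffices to treat general $\beta>-1$.

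The main idea for the finite sum is that it telescopes. I would introduce the auxiliary quantity $g_{k}=\gamma(k,\beta+1;q)/(1-q^{\beta+1})$ and show that each summand is a consecutive difference,
\[
	q^{k}\gamma(k+1,\beta;q)=g_{k+1}-g_{k},\qquad k\geq 0.
\]
Equivalently, this amounts to the contiguous relation
\[
	\gamma(k+1,\beta+1;q)-\gamma(k,\beta+1;q)=q^{k}(1-q^{\beta+1})\,\gamma(k+1,\beta;q),
\]
which I would verify directly from the definition $\gamma(u,v;q)=(q^{u};q)_{\infty}/(q^{u+v};q)_{\infty}$ by repeatedly peeling off the leading factor of a $q$-Pochhammer symbol via $(a;q)_{\infty}=(1-a)(aq;q)_{\infty}$. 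Concretely this yields the two relations $\gamma(k,\beta+1;q)=\tfrac{1-q^{k}}{1-q^{k+\beta+1}}\gamma(k+1,\beta+1;q)$ and $\gamma(k+1,\beta+1;q)=(1-q^{k+\beta+1})\gamma(k+1,\beta;q)$, and combining them gives the displayed relation after an elementary simplification. Once this is in hand the finite sum collapses to $g_{m}-g_{0}$, and $g_{0}=0$ because $\gamma(0,\beta+1;q)=0$ for $\beta+1>0$, which is exactly $\gamma(m,\beta+1;q)/(1-q^{\beta+1})$.

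For the infinite sum I would simply pass to the limit $m\to\infty$. Since $q\in(0,1)$ and $\gamma(k+1,\beta;q)\to 1$, the terms decay geometrically and the series converges; moreover $\gamma(m,\beta+1;q)\to 1$ because both $q$-Pochhammer symbols $(q^{m};q)_{\infty}$ and $(q^{m+\beta+1};q)_{\infty}$ tend to $1$ as $q^{m}\to 0$. Hence the partial sums $g_{m}$ tend to $1/(1-q^{\beta+1})$, which is \eqref{eq:sum-gamma-infty}.

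The only step demanding any care is the contiguous relation for $\gamma$; everything else is bookkeeping. The hypothesis $\beta>-1$ is used precisely to guarantee $g_{0}=0$ and $1-q^{\beta+1}\neq 0$, while $q\in(0,1)$ is what makes the products and the limit behave as claimed.
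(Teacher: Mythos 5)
Your proof is correct, but it follows a genuinely different route from the paper's. The paper converts $\gamma(k+1,\beta;q)$ into $\tfrac{(q;q)_\infty}{(q^{\beta+1};q)_\infty}\,\tfrac{(q^{\beta+1};q)_k}{(q;q)_k}$ and then invokes the $q$-Vandermonde sum \cite[Eq.\ (II.6)]{Gasper2004} to evaluate the resulting partial sum in closed form, so the finite identity rests on an external summation formula. You instead observe that each summand telescopes, $q^{k}\gamma(k+1,\beta;q)=\bigl(\gamma(k+1,\beta+1;q)-\gamma(k,\beta+1;q)\bigr)/(1-q^{\beta+1})$, and verify this contiguous relation directly from the definition of $\gamma$ by peeling off leading factors of the $q$-Pochhammer symbols; the finite sum then collapses to $\gamma(m,\beta+1;q)/(1-q^{\beta+1})$ using $\gamma(0,\beta+1;q)=0$, and the infinite sum follows by letting $m\to\infty$ with $\gamma(m,\beta+1;q)\to 1$. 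Amusingly, the contiguous relation you use is exactly the identity the authors record separately just after the lemma (to establish \eqref{eq:1-r-n-alpha}) and again inside the proof of Lemma~\ref{thm:sum-pi}, so your argument makes the lemma self-contained and elementary at the cost of nothing, whereas the paper's route situates the sum inside the standard $q$-hypergeometric toolkit (and, as they remark, the infinite case alternatively falls out of the $q$-Binomial Theorem \eqref{eq:q-Binom}). All your intermediate relations check out, and the specialization $\beta=\ell\in\NN_0$ via $\gamma(u,\ell;q)=(q^{u};q)_{\ell}$ is handled the same way in both arguments.
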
        
\begin{proof}
Letting $a = q^{\beta+1}$ and $c = q^{-n}$ in the $q$-Vandermonde Sum (see e.g. \cite[Eq. (II.6)]{Gasper2004}) telling us that 
\[
	\rphis{2}{1}{a,q^{-n}}{c}{q,q}
	=
	\frac{(c/a;q)_{n}}{(c;q)_{n}} a^{n},
\]
we get
\begin{equation}
	\label{eq:q-sup-k-q-beta-sum}
	\sum_{k=0}^{n}q^{k}
	\frac{(q^{\beta+1};q)_{k}}{(q;q)_{k}}
	=
	\frac{(q^{\beta+2};q)_{n}}{(q;q)_{n}}, \quad n \geq 0.
\end{equation}
Then
\begin{align*}
	\sum_{k=0}^{m-1}
	q^{k} \gamma(k+1,\beta;q)
	&= 
	\frac{(q;q)_\infty}{(q^{\beta+1};q)_\infty}
	\sum_{k=0}^{m}
	q^{k} \frac{(q^{\beta+1};q)_{k}}{(q;q)_{k}} \\
	&= 
	\frac{(q;q)_\infty}{(q^{\beta+1};q)_\infty}
	\frac{(q^{\beta+2};q)_{m-1}}{(q;q)_{m-1}} \\
	&= 
	\frac{\gamma(m,\beta+1;q)}{1-q^{\beta+1}}.
\end{align*}
In the limit $m \to \infty$ we get \eqref{eq:sum-gamma-infty}.
Equations \eqref{eq:q-sup-k-sum-finite} and \eqref{eq:q-sup-k-sum} are particular cases for $\beta = \ell \in \NN_0$.

Let us note that to show \eqref{eq:q-sup-k-sum} one can alternatively use the $q$-Binomial Theorem \eqref{eq:q-Binom} while letting $a = q^{\ell+1}$ and $z = q$.
\end{proof}

Let
\begin{equation}
	\label{eq:r-n-alpha}
	r_{n}^{(\alpha)}
	=
	\frac{1 - \gamma(n+1,\alpha+1;q)}{1-\gamma(n,\alpha+1;q)}.
\end{equation}
Then
\begin{equation}
	\label{eq:1-r-n-alpha}
	1 - r_{n}^{(\alpha)}
	=
	\frac
	{q^{n}(1 - q^{\alpha+1}) \gamma(n+1,\alpha;q)}
	{1-\gamma(n,\alpha+1;q)}.
\end{equation}
To see it, note that
\[
	\big(1-\gamma(u,v+1;q)\big)
	-
	\big(1-\gamma(u+1,v+1;q)\big)
	=
	q^{u}(1-q^{v+1}) \gamma(u+1,v;q).
\]

It is known that $\lim_{q \uparrow 1} \Gamma_{q}(x) = \Gamma(x)$ \cite[Eq. (I.36)]{Gasper2004}.
Hence from (\ref{eq:gamma-Gamma}) we infer that, for $u \geq 0$, $v > 0$,
\[
	\gamma(u,v;q) 
	\approx
	\frac{\Gamma(u+v)}{\Gamma(u)}
	(1-q)^{v} \quad \text{as } q \uparrow 1.
\]
From here and \eqref{eq:r-n-alpha}, while still assuming that $\alpha>-1$,
one finds that
\begin{equation}
	\label{eq:lim-q-to-1-wn}
	\forall n \in \NN_{0},\ 
	\lim_{q\,\uparrow1}r_{n}^{(\alpha)}=1.
\end{equation}

We define the modified $q$-Laguerre polynomials $\tilde{L}_{n}^{(\alpha)}(x;q)$
as
\[
	\tilde{L}_{n}^{(\alpha)}(x;q)
	=
	q^{n} r_{n}^{(\alpha)} L_{n}^{(\alpha)}(x;q)
	+
	\big( 1-r_{n}^{(\alpha)} \big) L_{n}^{(\alpha+1)}(x;q).
\]
The respective monic polynomials are denoted $\tilde{p}_{n}^{(\alpha)}(x;q)$,
and they fulfill
\begin{equation}
	\label{eq:p-tld-convex}
	\tilde{p}_{n}^{(\alpha)}(x;q)
	=
	r_{n}^{(\alpha)} p_{n}^{(\alpha)}(x;q)
	+
	\big( 1-r_{n}^{(\alpha)} \big) p_{n}^{(\alpha+1)}(x;q).
\end{equation}
Thus the monic modified $q$-Laguerre polynomials are defined as a
convex combination of the ``standard'' monic $q$-Laguerre polynomials.

It is known that \cite{Koekoek2010}
\[
	\lim_{q \uparrow 1}
	L_{n}^{(\alpha)} \big((1-q)x;q\big)
	=
	L_{n}^{(\alpha)}(x)
\]
where $L_{n}^{(\alpha)}(x)$ are the ordinary Laguerre polynomials.
Therefore, in view of \eqref{eq:lim-q-to-1-wn}, we also have
\[
	\lim_{q \uparrow 1}
	\tilde{L}_{n}^{(\alpha)}
	\big((1-q)x;q\big)
	=
	L_{n}^{(\alpha)}(x).
\]

\subsection{Quasi-orthogonal polynomials as Birth-Death polynomials} \label{sec:6:3}

Let
\begin{equation}
	\label{eq:BD-tilde}
	\tilde{\lambda}_{n}^{(\alpha)}
	=
	\frac
	{q^{-2n-\alpha-1}(1-q^{n+\alpha+1})}
	{r_{n}^{(\alpha)}}, \quad
	\tilde{\mu}_{n}^{(\alpha)}
	=
	q^{-2n-\alpha} (1-q^{n}) r_{n}^{(\alpha)},
\end{equation}
where $r_{n}^{(\alpha)}$ is defined in \eqref{eq:r-n-alpha} (hence
$\tilde{\mu}_{0}^{(\alpha)} = 0$).

The modified $q$-Laguerre polynomials in the monic form, $\tilde{p}_{n}^{(\alpha)}(x;q)$,
have been defined in \eqref{eq:p-tld-convex} as a convex combination of
$p_{n}^{(\alpha)}(x;q)$ and $p_{n}^{(\alpha+1)}(x;q)$. But owing to
identity \eqref{eq:p-monic-id-a} we can also write
\begin{equation}
	\label{eq:ptld-quasi-ortho}
	\tilde{p}_{n}^{(\alpha)}(x;q)
	=
	p_{n}^{(\alpha+1)}(x;q)
	+
	\tilde{\mu}_{n}^{(\alpha)} p_{n-1}^{(\alpha+1)}(x;q).
\end{equation}
Hence $(\tilde{p}_{n}^{(\alpha)}(x;q) : n \geq 0)$ is a quasi-orthogonal polynomial
sequence derived from the monic orthogonal polynomial sequence $(p_{n}^{(\alpha+1)}(x;q) : n \geq 0)$.
With this observation, Theorem~\ref{thm:BMV} is directly applicable
to the modified $q$-Laguerre polynomials.

\begin{theorem} 
The quasi-orthogonal polynomials $(\tilde{p}_{n}^{(\alpha)}(x;q) : n \geq 0)$
constitute a monic orthogonal polynomial sequence corresponding to
the Birth-Death process with the parameters \eqref{eq:BD-tilde}.
Hence this polynomial sequence obeys a three-term recurrence relation
\[
	\tilde{p}_{n+1}^{(\alpha)}(x;q)
	=
	\big( x-\tilde{b}_{n}^{(\alpha)} \big) \tilde{p}_{n}^{(\alpha)}(x;q)
	-
	\tilde{d}_{n}^{(\alpha)}\tilde{p}_{n-1}^{(\alpha)}(x;q), 
	\quad n \geq 0,
\]
with $\tilde{p}_{-1}^{(\alpha)}(x;q) = 0$, where the coefficients are
given by $\tilde{b}_{n}^{(\alpha)} = \tilde{\lambda}_{n}^{(\alpha)}+\tilde{\mu}_{n}^{(\alpha)}$,
$n \geq 0$, and
\[
	\tilde{d}_{n}^{(\alpha)}
	=
	\tilde{\lambda}_{n-1}^{(\alpha)}
	\tilde{\mu}_{n}^{(\alpha)}
	=
	q^{-4n-2\alpha+1}(1-q^{n})(1-q^{n+\alpha})
	\frac{r_{n}^{(\alpha)}}{r_{n-1}^{(\alpha)}} \quad n \geq 1.
\]
\end{theorem}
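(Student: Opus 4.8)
The plan is to apply Theorem~\ref{thm:BMV} to the base monic orthogonal polynomial sequence $(p_n^{(\alpha+1)}(x;q) : n \geq 0)$, whose recurrence coefficients are $b_n^{(\alpha+1)} = \lambda_n^{(\alpha+1)} + \mu_n^{(\alpha+1)}$ and $d_n^{(\alpha+1)} = \lambda_{n-1}^{(\alpha+1)}\mu_n^{(\alpha+1)}$, with the auxiliary sequence taken to be $f_n = \tilde{\mu}_n^{(\alpha)}$. Indeed, identity~\eqref{eq:ptld-quasi-ortho} is precisely the relation $\tilde{p}_n^{(\alpha)} = p_n^{(\alpha+1)} + f_n\, p_{n-1}^{(\alpha+1)}$ required there. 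First I would record that $f_0 = \tilde{\mu}_0^{(\alpha)} = 0$ and that $f_n \neq 0$ for $n \geq 1$; the latter, together with the positivity $\tilde{d}_n^{(\alpha)} > 0$ needed at the end, reduces to checking $r_n^{(\alpha)} > 0$ for all $n$. Writing $g_n = 1 - \gamma(n,\alpha+1;q)$ so that $r_n^{(\alpha)} = g_{n+1}/g_n$ by~\eqref{eq:r-n-alpha}, this follows from monotonicity of the infinite $q$-Pochhammer product: since $\alpha+1 > 0$ one has $q^{n+\alpha+1} < q^n$, hence $\gamma(n,\alpha+1;q) = (q^n;q)_\infty / (q^{n+\alpha+1};q)_\infty < 1$ and each $g_n > 0$.

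The decisive reduction is to the single identity
\[
	\tilde{\lambda}_n^{(\alpha)} + \tilde{\mu}_{n+1}^{(\alpha)} = b_n^{(\alpha+1)} = \lambda_n^{(\alpha+1)} + \mu_n^{(\alpha+1)}, \qquad n \geq 0,
\]
which I will refer to as $(\star)$. From~\eqref{eq:BD-tilde} one reads off $\tilde{\lambda}_n^{(\alpha)} = \mu_n^{(\alpha+1)}/r_n^{(\alpha)}$ and $f_n = \tilde{\mu}_n^{(\alpha)} = \lambda_{n-1}^{(\alpha+1)}\, r_n^{(\alpha)}$, so $(\star)$ is equivalent to $\mu_n^{(\alpha+1)}(1-r_n^{(\alpha)})/r_n^{(\alpha)} = \lambda_n^{(\alpha+1)}(1 - r_{n+1}^{(\alpha)})$. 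Substituting~\eqref{eq:1-r-n-alpha} and $r_n^{(\alpha)} = g_{n+1}/g_n$, the factors $g_{n+1}$ and $(1-q^{\alpha+1})$ cancel, and after collecting the powers of $q$ the claim collapses to the $q$-Pochhammer shift $(1-q^{n+\alpha+1})\gamma(n+1,\alpha;q) = (1-q^{n+1})\gamma(n+2,\alpha;q)$; this is immediate from $\gamma(u,v;q) = (q^u;q)_\infty/(q^{u+v};q)_\infty$, which gives $\gamma(n+2,\alpha;q)/\gamma(n+1,\alpha;q) = (1-q^{n+\alpha+1})/(1-q^{n+1})$. I expect this short $q$-calculus step to be the only genuine computation, and hence the main (if modest) obstacle.

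With $(\star)$ in hand the consistency condition~\eqref{eq:gen-quasi-ortho} is automatic. Using $d_n^{(\alpha+1)}/f_n = \mu_n^{(\alpha+1)}/r_n^{(\alpha)} = \tilde{\lambda}_n^{(\alpha)}$ (and likewise at index $n-1$), condition~\eqref{eq:gen-quasi-ortho} rearranges to $\tilde{\mu}_{n+1}^{(\alpha)} - \tilde{\mu}_n^{(\alpha)} = (b_n^{(\alpha+1)} - \tilde{\lambda}_n^{(\alpha)}) - (b_{n-1}^{(\alpha+1)} - \tilde{\lambda}_{n-1}^{(\alpha)})$; by $(\star)$ the right-hand side equals $\tilde{\mu}_{n+1}^{(\alpha)} - \tilde{\mu}_n^{(\alpha)}$, so the required relation holds for $n \geq 2$. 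Together with $\tilde{d}_n^{(\alpha)} > 0$, established above, Theorem~\ref{thm:BMV} then guarantees that $(\tilde{p}_n^{(\alpha)}(x;q) : n \geq 0)$ is a monic orthogonal polynomial sequence obeying the stated three-term recurrence with $\tilde{p}_{-1}^{(\alpha)}(x;q) = 0$.

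Finally I would read off the coefficients. Theorem~\ref{thm:BMV} gives $\tilde{b}_n^{(\alpha)} = b_n^{(\alpha+1)} + f_n - f_{n+1} = b_n^{(\alpha+1)} + \tilde{\mu}_n^{(\alpha)} - \tilde{\mu}_{n+1}^{(\alpha)}$, which by $(\star)$ equals $\tilde{\lambda}_n^{(\alpha)} + \tilde{\mu}_n^{(\alpha)}$, and $\tilde{d}_n^{(\alpha)} = (f_n/f_{n-1})\, d_{n-1}^{(\alpha+1)} = (\tilde{\mu}_n^{(\alpha)}/\tilde{\mu}_{n-1}^{(\alpha)})\,\lambda_{n-2}^{(\alpha+1)}\mu_{n-1}^{(\alpha+1)} = \lambda_{n-1}^{(\alpha+1)}\mu_{n-1}^{(\alpha+1)}\, r_n^{(\alpha)}/r_{n-1}^{(\alpha)} = \tilde{\lambda}_{n-1}^{(\alpha)}\tilde{\mu}_n^{(\alpha)}$. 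Substituting the explicit values of $\lambda^{(\alpha+1)}$ and $\mu^{(\alpha+1)}$ then yields $\tilde{d}_n^{(\alpha)} = q^{-4n-2\alpha+1}(1-q^n)(1-q^{n+\alpha})\, r_n^{(\alpha)}/r_{n-1}^{(\alpha)}$, as claimed, exhibiting $(\tilde{p}_n^{(\alpha)}(x;q))$ as the Birth--Death polynomials with parameters~\eqref{eq:BD-tilde}.
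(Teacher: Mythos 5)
Your proof is correct and follows essentially the same route as the paper: both apply Theorem~\ref{thm:BMV} with $f_n=\tilde{\mu}_n^{(\alpha)}$ and reduce the consistency condition \eqref{eq:gen-quasi-ortho} to the single identity $\tilde{\mu}_{n+1}^{(\alpha)}-b_n^{(\alpha+1)}+d_n^{(\alpha+1)}/\tilde{\mu}_n^{(\alpha)}=0$, which is exactly your $(\star)$ because $d_n^{(\alpha+1)}/\tilde{\mu}_n^{(\alpha)}=\tilde{\lambda}_n^{(\alpha)}$ --- you merely carry out explicitly the $q$-Pochhammer computation that the paper dismisses as ``straightforward''. The only small point worth adding is that the formula $\tilde{d}_n=(f_n/f_{n-1})\,d_{n-1}$ is asserted in Theorem~\ref{thm:BMV} only for $n\geq 2$ (it is meaningless for $n=1$ since $f_0=0$), so the case $n=1$ of $\tilde{d}_n^{(\alpha)}=\tilde{\lambda}_{n-1}^{(\alpha)}\tilde{\mu}_{n}^{(\alpha)}$ should be verified directly from $\tilde{d}_1=d_1+f_1(b_0-b_1-f_1+f_2)$, which via $(\star)$ indeed yields $\tilde{\lambda}_0^{(\alpha)}\tilde{\mu}_1^{(\alpha)}$.
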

\begin{proof} 
Comparing to the notation used in Theorem~\ref{thm:BMV}
we have
\[
	b_{n} \equiv b_{n}^{(\alpha+1)}, \quad 
	d_{n} \equiv d_{n}^{(\alpha+1)}, \quad
	f_{n} = \tilde{\mu}_{n}^{(\alpha)}.
\]
Thus we have to show that
\[
	\tilde{\mu}_{n+1}^{(\alpha)}
	=
	\tilde{\mu}_{n}^{(\alpha)}
	+
	b_{n}^{(\alpha+1)}
	-
	b_{n-1}^{(\alpha+1)}
	-
	\frac{d_{n}^{(\alpha+1)}}{\tilde{\mu}_{n}^{(\alpha)}}
	+
	\frac{d_{n-1}^{(\alpha+1)}}{\tilde{\mu}_{n-1}^{(\alpha)}}
	\quad \text{for } n \geq 2.
\]
Obviously, it suffices to show that
\begin{equation}
	\label{eq:proof-quasi-ortho-red}
	\tilde{\mu}_{n+1}^{(\alpha)}
	-
	b_{n}^{(\alpha+1)}
	+
	\frac{d_{n}^{(\alpha+1)}}{\tilde{\mu}_{n}^{(\alpha)}}
	=
	0 \quad \text{for } n \geq 1.
\end{equation}
But this equation can be verified by a straightforward computation.

Furthermore, it is also straightforward to verify that
\[
	\frac
	{\tilde{\mu}_{n}^{(\alpha)}}
	{\tilde{\mu}_{n-1}^{(\alpha)}} d_{n}^{(\alpha+1)}
	=
	\tilde{d}_{n}^{(\alpha)} 
	\quad \text{and} \quad
	b_{n}^{(\alpha+1)}
	+
	\tilde{\mu}_{n}^{(\alpha)}
	-
	\tilde{\mu}_{n+1}^{(\alpha)}
	=
	\tilde{b}_{n}^{(\alpha)}.
\]
Finally, one observes that $\tilde{d}_{n}^{(\alpha)} > 0$ for $n \geq 1$.
\end{proof}

Further we can apply Theorem~\ref{thm:BMV2} to determine the moments
associated with the modified $q$-Laguerre polynomials. Let us recall that
the moment sequence $(M_{n}^{(\alpha)} : n \geq 0)$ associated with
the $q$-Laguerre polynomials is well known, see \cite{Moak1981}. When consulting
this source some caution is needed, however, because the notation
in~\cite{Moak1981} slightly differs from the commonly used notation like
that introduced in \cite{Koekoek2010}. Namely, the variable
$x$ in the $q$-Laguerre polynomials is rescaled by the factor $(1-q)$
in the former source, and the same is true for the $q$-Bessel functions.
We have
\begin{equation}
	\label{eq:Mn-qLaguerre}
	\forall n \geq 0,\ 
	M_{n}^{(\alpha)} = q^{-\alpha n-(n+1)n/2}(q^{\alpha+1};q)_{n}.
\end{equation}

\begin{proposition}\label{thm:Mtld-n-a} 
The moments $\tilde{M}_{n}^{(\alpha)}$
associated with the modified $q$-Laguerre polynomials take the values
\[
	\tilde{M}_{0}^{(\alpha)}
	=
	1 \quad \text{and} \quad 
	\tilde{M}_{n}^{(\alpha)}
	=
	q^{-\alpha n-(n+1)n/2}
	\frac{(q^{\alpha+1};q)_{n}}{r_{0}^{(\alpha)}} 
	\quad \text{for } n \geq 1.
\]
\end{proposition}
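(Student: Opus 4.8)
The plan is to apply Theorem~\ref{thm:BMV2} directly, exploiting the explicit moment recurrence \eqref{eq:Mtld-recur}. In the notation of that theorem the underlying monic orthogonal sequence is $\big(p_{n}^{(\alpha+1)}(x;q) : n \geq 0\big)$, in accordance with the quasi-orthogonal representation \eqref{eq:ptld-quasi-ortho}; hence $b_{n} \equiv b_{n}^{(\alpha+1)}$, $d_{n} \equiv d_{n}^{(\alpha+1)}$, and $f_{n} = \tilde{\mu}_{n}^{(\alpha)}$. Consequently the sequence $(M_{n})$ appearing in \eqref{eq:Mtld-recur} is $\big(M_{n}^{(\alpha+1)}\big)$, given by \eqref{eq:Mn-qLaguerre} with $\alpha$ replaced by $\alpha+1$, namely $M_{n}^{(\alpha+1)} = q^{-(\alpha+1)n-(n+1)n/2}(q^{\alpha+2};q)_{n}$. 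Note also that $f_{1} = \tilde{\mu}_{1}^{(\alpha)} = q^{-2-\alpha}(1-q)\,r_{1}^{(\alpha)} \neq 0$, as required by the theorem.

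The decisive simplification comes from the coefficient $\tilde{b}_{0}^{(\alpha)} - \tilde{d}_{1}^{(\alpha)}/f_{1}$ in \eqref{eq:Mtld-recur}. Since the modified $q$-Laguerre sequence is a Birth-Death sequence with parameters \eqref{eq:BD-tilde}, formula \eqref{eq:34} gives $\tilde{b}_{0}^{(\alpha)} = \tilde{\lambda}_{0}^{(\alpha)} + \tilde{\mu}_{0}^{(\alpha)}$ and $\tilde{d}_{1}^{(\alpha)} = \tilde{\lambda}_{0}^{(\alpha)}\,\tilde{\mu}_{1}^{(\alpha)}$, so that $\tilde{d}_{1}^{(\alpha)}/f_{1} = \tilde{\lambda}_{0}^{(\alpha)}$. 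But $\tilde{\mu}_{0}^{(\alpha)} = 0$ by \eqref{eq:BD-tilde}, whence $\tilde{b}_{0}^{(\alpha)} = \tilde{\lambda}_{0}^{(\alpha)}$ as well. The two quantities therefore coincide, the coefficient of $\tilde{M}_{n}^{(\alpha)}$ on the right-hand side of \eqref{eq:Mtld-recur} vanishes, and one is left with the single-term recurrence
\[
	\tilde{M}_{n+1}^{(\alpha)} = \tilde{\lambda}_{0}^{(\alpha)}\, M_{n}^{(\alpha+1)}, \quad n \geq 0.
\]

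It then remains to substitute $\tilde{\lambda}_{0}^{(\alpha)} = q^{-\alpha-1}(1-q^{\alpha+1})/r_{0}^{(\alpha)}$ from \eqref{eq:BD-tilde} together with the explicit $M_{n}^{(\alpha+1)}$ above, and to simplify. For the $q$-Pochhammer part I would use $(1-q^{\alpha+1})(q^{\alpha+2};q)_{n} = (q^{\alpha+1};q)_{n+1}$, and for the power of $q$ the elementary identity $-\alpha-1-(\alpha+1)n-(n+1)n/2 = -\alpha(n+1)-(n+1)(n+2)/2$. This produces $\tilde{M}_{n+1}^{(\alpha)} = q^{-\alpha(n+1)-(n+2)(n+1)/2}(q^{\alpha+1};q)_{n+1}/r_{0}^{(\alpha)}$, which is exactly the asserted formula with $n$ replaced by $n+1$; the case $n=0$ recovers $\tilde{M}_{1}^{(\alpha)}$. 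Finally $\tilde{M}_{0}^{(\alpha)} = 1$ is the normalization of the moment functional $\tilde{\Lambda}$ (the zeroth moment of a monic orthogonal polynomial sequence).

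There is no serious obstacle here: the entire argument hinges on the observation that $\tilde{\mu}_{0}^{(\alpha)} = 0$ forces $\tilde{b}_{0}^{(\alpha)} = \tilde{d}_{1}^{(\alpha)}/f_{1}$, which collapses \eqref{eq:Mtld-recur} into a single explicit term. After that, the remaining work is the routine $q$-Pochhammer telescoping and exponent bookkeeping sketched above.
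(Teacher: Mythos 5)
Your proof is correct and follows essentially the same route as the paper: apply Theorem~\ref{thm:BMV2} with the base sequence $(p_n^{(\alpha+1)})$ and $f_n=\tilde{\mu}_n^{(\alpha)}$, observe that $\tilde{b}_0^{(\alpha)}-\tilde{d}_1^{(\alpha)}/f_1$ vanishes so that \eqref{eq:Mtld-recur} collapses to $\tilde{M}_{n+1}^{(\alpha)}=q^{-\alpha-1}(1-q^{\alpha+1})M_n^{(\alpha+1)}/r_0^{(\alpha)}$, and simplify. The only (harmless) difference is that you verify the cancellation directly from the Birth--Death factorization $\tilde{d}_1^{(\alpha)}=\tilde{\lambda}_0^{(\alpha)}\tilde{\mu}_1^{(\alpha)}$ and $\tilde{\mu}_0^{(\alpha)}=0$, whereas the paper derives it from the identity \eqref{eq:proof-quasi-ortho-red} underlying the quasi-orthogonality condition.
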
         
\begin{proof} 
Sticking to the notation of Theorem~\ref{thm:BMV}
one can readily check that~\eqref{eq:gen-quasi-ortho} is equivalent
to the equation
\[
	f_{n} - \tilde{b}_{n} + \frac{\tilde{d}_{n+1}}{f_{n+1}}
	=
	f_{n+1} - \tilde{b}_{n+1} + \frac{\tilde{d}_{n+2}}{f_{n+2}},
	\quad n \geq 0.
\]
Furthermore, one finds that
\[
	f_{n} - \tilde{b}_{n} + \frac{\tilde{d}_{n+1}}{f_{n+1}}
	=
	f_{n+1} - b_{n} + \frac{d_{n}}{f_{n}}, \quad n \geq 1.
\]

Dealing with the modified monic $q$-Laguerre polynomials regarded
as quasi-orthogonal polynomials one has to substitute $\tilde{\mu}_{n}^{(\alpha)}$
for $f_{n}$, see~\eqref{eq:ptld-quasi-ortho}. In view of \eqref{eq:proof-quasi-ortho-red}
we get
\[
	\tilde{\mu}_{n}^{(\alpha)}
	-
	\tilde{b}_{n}^{(\alpha)}
	+
	\frac{\tilde{d}_{n+1}^{(\alpha)}}{\tilde{\mu}_{n+1}^{(\alpha)}}
	=
	\tilde{\mu}_{n+1}^{(\alpha)}
	-
	\tilde{b}_{n+1}^{(\alpha)}
	+
	\frac{\tilde{d}_{n+2}^{(\alpha)}}{\tilde{\mu}_{n+2}^{(\alpha)}}
	=
	\tilde{\mu}_{n+2}^{(\alpha)}
	-
	b_{n+1}^{(\alpha+1)}
	+
	\frac{d_{n+1}^{(\alpha+1)}}{\tilde{\mu}_{n+1}^{(\alpha)}}
	=
	0,\quad n \geq 0.
\]
In particular, for $n=0$, $\tilde{\mu}_{n}^{(\alpha)} = 0$ and
\[
	\tilde{b}_{0}^{(\alpha)}
	=
	\frac{\tilde{d}_{1}^{(\alpha)}}{\tilde{\mu}_{1}^{(\alpha)}}
	=
	\frac{q^{-\alpha-1}(1-q^{\alpha+1})}{r_{0}^{(\alpha)}}.
\]
By Theorem~\ref{thm:BMV2}, equation \eqref{eq:Mtld-recur}, we get
\[
	\forall n \geq 0,\ 
	\tilde{M}_{n+1}^{(\alpha)}
	=
	\frac{q^{-\alpha-1}(1-q^{\alpha+1})}{r_{0}^{(\alpha)}}
	M_{n}^{(\alpha+1)}.
\]
In view of \eqref{eq:Mn-qLaguerre}, the proposition readily follows.
\end{proof}

\begin{remark} 
As already noted, the moment problem for the $q$-Laguerre
polynomials is indeterminate. On the other hand, we shall see later
that the moment problem for the modified $q$-Laguerre polynomials
is determinate. The fact that, except the first terms, both moment
sequences are proportional immediately implies that Carleman's sufficient
condition for the determinacy (see e.g. \cite[Theorem 4.3]{Schmudgen2017}) is \emph{not} applicable to the modified $q$-Laguerre
polynomials. 
\end{remark}

\subsection{The Jacobi matrix operator} \label{sec:6:4}

For the Birth-Death polynomials with the parameters \eqref{eq:BD-tilde}
we have
\[
	\tilde{\pi}_{n}^{(\alpha)}
	=
	\prod_{k=0}^{n-1}
	\frac
	{\tilde{\lambda}_{k}^{(\alpha)}}
	{\tilde{\mu}_{k+1}^{(\alpha)}}
	=
	\frac
	{q^{n} (q^{\alpha+1};q)_{n}\big(1-\gamma(1,\alpha+1;q)\big)}
	{(q;q)_{n} 
	\big(1-\gamma(n,\alpha+1;q)\big)
	\big(1-\gamma(n+1,\alpha+1;q)\big)}
\]
and
\begin{equation}
	\label{eq:lbd-pi-Laguerre-mod}
	\tilde{\lambda}_{n}^{(\alpha)} \tilde{\pi}_{n}^{(\alpha)}
	=
	\frac
	{q^{-n-\alpha-1}
	(q^{\alpha+1};q)_{n+1}
	\big(1-\gamma(1,\alpha+1;q)\big)}
	{(q;q)_{n} \big(1-\gamma(n+1,\alpha+1;q) \big)^{2}}.
\end{equation}
The corresponding Jacobi parameters $\tilde{a}_{n}^{(\alpha)}$, $\tilde{b}_{n}^{(\alpha)}$,
$n \geq 0$, are given by
\begin{align*}
	(\tilde{a}_{n}^{(\alpha)})^{2}
	=
	\tilde{\lambda}_{n}^{(\alpha)}
	\tilde{\mu}_{n+1}^{(\alpha)}
	&=
	q^{-4n-2\alpha-3} (1-q^{n+1}) (1-q^{n+\alpha+1})
	\frac{r_{n+1}^{(\alpha)}}{r_{n}^{(\alpha)}}, \\
	\tilde{b}_{n}^{(\alpha)}
	=
	\tilde{\lambda}_{n}^{(\alpha)}
	+
	\tilde{\mu}_{n}^{(\alpha)}
	&=
	q^{-2n-\alpha-1}
	\bigg(
	\frac{(1-q^{n+\alpha+1})}{r_{n}^{(\alpha)}}
	+
	q (1-q^{n}) r_{n}^{(\alpha)}
	\bigg).
\end{align*}
The Jacobi matrix operator with these entries will be denoted $\tilde{J}^{(\alpha)}$.

\begin{lemma} \label{thm:sum-pi} 
For $n \in \NN_{0}$,
\[
	\sum_{j=0}^{n}
	\tilde{\pi}_{j}^{(\alpha)}
	=
	\frac
	{(q^{\alpha+2};q)_{n} \big( 1-\gamma(1,\alpha+1;q) \big)}
	{(q;q)_{n} \big( 1-\gamma(n+1,\alpha+1;q) \big)}.
\]
\end{lemma}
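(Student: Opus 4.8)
The plan is to argue by induction on $n$, exploiting the telescoping structure hidden in the claimed closed form. Write for brevity $g_m=\gamma(m,\alpha+1;q)$ and recall the explicit expression
\[
	\tilde{\pi}_n^{(\alpha)}
	=
	\frac{q^{n}(q^{\alpha+1};q)_n\big(1-\gamma(1,\alpha+1;q)\big)}
	{(q;q)_n(1-g_n)(1-g_{n+1})}
\]
derived just above the lemma. The factor $1-\gamma(1,\alpha+1;q)$ is common to $\tilde{\pi}_n^{(\alpha)}$ and to the asserted right-hand side, so it can be carried along as an overall constant and effectively ignored in the verification. For the base case $n=0$ one has $\tilde{\pi}_0^{(\alpha)}=1$ (empty product), while the right-hand side equals $1$ immediately since $(q^{\alpha+2};q)_0=(q;q)_0=1$.

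For the inductive step I would assume the formula holds for $n$, add $\tilde{\pi}_{n+1}^{(\alpha)}$, and clear the common denominator $(q;q)_{n+1}(1-g_{n+1})(1-g_{n+2})$. Using the elementary Pochhammer shifts $(q;q)_{n+1}=(1-q^{n+1})(q;q)_n$, $(q^{\alpha+1};q)_{n+1}=(1-q^{\alpha+1})(q^{\alpha+2};q)_n$ and $(q^{\alpha+2};q)_{n+1}=(1-q^{\alpha+n+2})(q^{\alpha+2};q)_n$, the common factor $(q^{\alpha+2};q)_n$ cancels and the claim reduces to the scalar identity
\[
	(1-q^{n+1})(1-g_{n+2})+q^{n+1}(1-q^{\alpha+1})
	=
	(1-q^{\alpha+n+2})(1-g_{n+1}).
\]
Expanding both sides, the pure $q$-power terms cancel (both sides carry $1-q^{\alpha+n+2}$), so what remains to be shown is precisely
\[
	(1-q^{n+1})\,g_{n+2}=(1-q^{\alpha+n+2})\,g_{n+1}.
\]

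The final ingredient is the functional equation $\gamma(u+1,v;q)=\tfrac{1-q^{u+v}}{1-q^{u}}\,\gamma(u,v;q)$, which is immediate from the definition $\gamma(u,v;q)=(q^{u};q)_\infty/(q^{u+v};q)_\infty$ by pulling one factor out of each infinite product. Applying it with $u=n+1$ and $v=\alpha+1$ gives exactly the displayed relation between $g_{n+1}$ and $g_{n+2}$, completing the induction. There is no genuine analytic obstacle here; the only point demanding care is the bookkeeping of the three Pochhammer shifts and the recognition that the whole computation collapses onto the one-line recurrence for $\gamma$. (Equivalently, one may phrase the argument as verifying $\tilde{\pi}_n^{(\alpha)}=A_n-A_{n-1}$, where $A_n$ denotes the claimed partial-sum formula, which is the same calculation written as a telescoping check.)
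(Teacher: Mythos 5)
Your proof is correct and is essentially the paper's argument in inductive clothing: the paper writes $\tilde{\pi}_j^{(\alpha)}$ explicitly as a constant multiple of $\frac{1}{1-\gamma(j+1,\alpha+1;q)}-\frac{1}{1-\gamma(j,\alpha+1;q)}$ and telescopes, which is exactly your check that the claimed partial sum $A_n$ satisfies $A_{n+1}-A_n=\tilde{\pi}_{n+1}^{(\alpha)}$. All the algebra you reduce to, including the one-step recurrence $(1-q^{n+1})\gamma(n+2,\alpha+1;q)=(1-q^{n+\alpha+2})\gamma(n+1,\alpha+1;q)$ and the base case using $\gamma(0,\alpha+1;q)=0$, checks out.
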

\begin{proof} Note that
\[
	\frac{1}{1-\gamma(j+1,\alpha+1;q)}
	-
	\frac{1}{1-\gamma(j,\alpha+1;q)}
	=
	\frac
	{q^{j}(1-q^{\alpha+1}) \gamma(j+1,\alpha;q)}
	{\big(1-\gamma(j,\alpha+1;q)\big) 
	\big(1-\gamma(j+1,\alpha+1;q)\big)}.
\]
Thus we have
\begin{align*}
	\sum_{j=0}^{n}\tilde{\pi}_{j}^{(\alpha)} 
	&= 
	\big(1-\gamma(1,\alpha+1;q)\big)
	\sum_{j=0}^{n}
	\frac
	{q^{j} (q^{\alpha+1};q)_{j}}
	{(q;q)_{j} 
	\big(1-\gamma(j,\alpha+1;q) \big)
	\big(1-\gamma(j+1,\alpha+1;q) \big)} \\
	&= 
	\big(1-\gamma(1,\alpha+1;q)\big)
	\frac{(q^{\alpha+2};q)_{\infty}}{(q;q)_{\infty}}
	\sum_{j=0}^{n}
	\bigg(
	\frac{1}{1-\gamma(j+1,\alpha+1;q)} 
	-\frac{1}{1-\gamma(j,\alpha+1;q)} 
	\bigg) \\
	&= 
	\big(1-\gamma(1,\alpha+1;q)\big)
	\frac{(q^{\alpha+2};q)_{\infty}}{(q;q)_{\infty}}
	\bigg(
	\frac{1}{1-\gamma(n+1,\alpha+1;q)}-1
	\bigg) \\
	&= 
	\frac{(q^{\alpha+2};q)_{n}}{(q;q)_{n}}
	\frac{1-\gamma(1,\alpha+1;q)}{1-\gamma(n+1,\alpha+1;q)}.
\end{align*}
\end{proof}

\begin{theorem}\label{thm:Jacobi-J-a} 
The Hamburger moment problem
for the modified $q$-Laguerre polynomials is determinate or, equivalently,
the Jacobi operator $\tilde{J}^{(\alpha)}$ is self-adjoint. Moreover,
$\tilde{J}^{(\alpha)}$ is positive, the inversion $(\tilde{J}^{(\alpha)})^{-1}$
exists and belongs to the trace class.
\end{theorem}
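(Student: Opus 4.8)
The plan is to deduce every assertion from Theorem~\ref{thm:2}, the general Birth--Death criterion, specialized to the parameters \eqref{eq:BD-tilde}. The decisive simplification is that here $\tilde{\mu}_{0}^{(\alpha)} = 0$, so in the notation of Section~\ref{sec:4} we are in the case $\mu_{0} = 0$. Consequently $\fraks_{n} = 0$ for every $n$ (see \eqref{eq:44}), and in particular $\fraks_{\infty} = 0 < \infty$. Thus the self-adjointness hypothesis of Theorem~\ref{thm:2} collapses to the single requirement $\sum_{n=0}^{\infty} \tilde{\pi}_{n}^{(\alpha)} = \infty$, while the trace formula \eqref{eq:45a} reduces to $\Tr\big((\tilde{J}^{(\alpha)})^{-1}\big) = \sum_{j=0}^{\infty} (\tilde{\lambda}_{j}^{(\alpha)}\tilde{\pi}_{j}^{(\alpha)})^{-1} \sum_{n=0}^{j} \tilde{\pi}_{n}^{(\alpha)}$.

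First I would verify the divergence $\sum_{n} \tilde{\pi}_{n}^{(\alpha)} = \infty$ directly from the closed form in Lemma~\ref{thm:sum-pi}. As $n \to \infty$ we have $q^{n+1} \to 0$, hence $\gamma(n+1,\alpha+1;q) \to 1$, so the denominator $(q;q)_{n}\big(1 - \gamma(n+1,\alpha+1;q)\big)$ tends to $0$ while the numerator $(q^{\alpha+2};q)_{n}\big(1 - \gamma(1,\alpha+1;q)\big)$ converges to a strictly positive limit (note $\gamma(1,\alpha+1;q) < 1$ because $\alpha > -1$ forces $q^{\alpha+2} < q$). Therefore the partial sums increase to $+\infty$, and Theorem~\ref{thm:2} yields that $\tilde{J}^{(\alpha)}$ is self-adjoint and positive-semidefinite with $0 \notin \sigmaP(\tilde{J}^{(\alpha)})$; equivalently, the Hamburger moment problem is determinate.

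It remains to show that $(\tilde{J}^{(\alpha)})^{-1}$ is trace class, which by Theorem~\ref{thm:T1} amounts to checking that the reduced sum above is finite. Here I would substitute the explicit expression \eqref{eq:lbd-pi-Laguerre-mod} for $\tilde{\lambda}_{j}^{(\alpha)}\tilde{\pi}_{j}^{(\alpha)}$ together with Lemma~\ref{thm:sum-pi}. The two factors share the blocks $(q;q)_{j}$, $\big(1 - \gamma(1,\alpha+1;q)\big)$ and one power of $\big(1 - \gamma(j+1,\alpha+1;q)\big)$, which cancel; using $(q^{\alpha+1};q)_{j+1} = (1 - q^{\alpha+1})(q^{\alpha+2};q)_{j}$ the remaining $q$-Pochhammer symbols collapse as well, leaving the single summand $\frac{q^{j+\alpha+1}}{1 - q^{\alpha+1}}\big(1 - \gamma(j+1,\alpha+1;q)\big)$. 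Since $0 < \gamma(j+1,\alpha+1;q) < 1$, this is dominated by $\frac{q^{j+\alpha+1}}{1 - q^{\alpha+1}}$, so summability follows at once from the geometric series. In fact, evaluating the sum exactly via \eqref{eq:sum-gamma-infty} (with $\beta = \alpha+1$) and $\sum_{j} q^{j} = (1-q)^{-1}$ gives
\[
	\Tr\big((\tilde{J}^{(\alpha)})^{-1}\big) = \frac{q^{\alpha+1}}{1 - q^{\alpha+1}} \bigg( \frac{1}{1-q} - \frac{1}{1 - q^{\alpha+2}} \bigg) < \infty.
\]
As the trace is finite, $(\tilde{J}^{(\alpha)})^{-1}$ is trace class; being in particular bounded, it forces $\tilde{J}^{(\alpha)} \geq \norm{(\tilde{J}^{(\alpha)})^{-1}}^{-1} > 0$, so $\tilde{J}^{(\alpha)}$ is strictly positive.

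All the conceptual content is carried by Theorem~\ref{thm:2} and the two $q$-series identities of Lemma~\ref{thm:gamma} and Lemma~\ref{thm:sum-pi}; the only genuine obstacle is the bookkeeping in the cancellation of the $q$-Pochhammer factors in the trace computation, where one must track the argument shifts carefully so that the product telescopes to the clean geometric-type summand displayed above.
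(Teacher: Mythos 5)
Your proposal is correct and follows essentially the same route as the paper: both verify $\sum_n \tilde{\pi}_n^{(\alpha)}=\infty$ via Lemma~\ref{thm:sum-pi} and $\gamma(n+1,\alpha+1;q)\to 1$, reduce the trace sum to $\frac{q^{\alpha+1}}{1-q^{\alpha+1}}\sum_j q^{j}\bigl(1-\gamma(j+1,\alpha+1;q)\bigr)$, and invoke Theorem~\ref{thm:2} with $\tilde{\mu}_0^{(\alpha)}=0$, hence $\fraks_n\equiv 0$. Your closed form for the trace agrees with the paper's \eqref{eq:Tr-J-a-inv}, since $\frac{1}{1-q}-\frac{1}{1-q^{\alpha+2}}=\frac{q(1-q^{\alpha+1})}{(1-q)(1-q^{\alpha+2})}$.
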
                          
\begin{proof} We claim that
\[
	\sum_{j=0}^{\infty}
	\tilde{\pi}_{j}^{(\alpha)}
	=
	\infty 
	\quad \text{and} \quad
	\sum_{j=0}^{\infty}
	\frac{1}{\tilde{\lambda}_{j}^{(\alpha)}\tilde{\pi}_{j}^{(\alpha)}}
	\sum_{n=0}^{j}
	\tilde{\pi}_{n}^{(\alpha)} < \infty.
\]
Firstly, by Lemma~\ref{thm:sum-pi},
\[
	\sum_{j=0}^{\infty}
	\tilde{\pi}_{j}^{(\alpha)}
	=
	\lim_{n \to \infty}
	\frac
	{(q^{\alpha+2};q)_{n} \big(1-\gamma(1,\alpha+1;q)\big)}
	{(q;q)_{n} \big(1-\gamma(n+1,\alpha+1;q)\big)}
	=
	\infty.
\]
Indeed, in virtue of \eqref{eq:gamma-phi},
\[
	\lim_{n \to \infty}
	\gamma(n+1,\alpha+1;q)
	=
	\lim_{n \to \infty}
	\frac{1}{\rphis{1}{0}{q^{\alpha+1}}{-}{q,q^{n+1}}} 
	= 
	1.
\]
Further,
\[
	\sum_{j=0}^{\infty}
	\frac{1}{\tilde{\lambda}_{j}^{(\alpha)} \tilde{\pi}_{j}^{(\alpha)}}
	\sum_{n=0}^{j}
	\tilde{\pi}_{n}^{(\alpha)}
	=
	\frac{q^{\alpha+1}}{1-q^{\alpha+1}}
	\sum_{j=0}^{\infty} 
	q^{j} \big(1-\gamma(j+1,\alpha+1;q) \big) < \infty.
\]

The theorem now follows straightforwardly from Theorem~\ref{thm:2}. Note that $\tilde{\mu}_{0}^{(\alpha)}=0$
and therefore $\fraks_{n} \equiv 0$. 
\end{proof}

We can compute the trace of $(\tilde{J}^{(\alpha)})^{-1}$ explicitly. By Theorem~\ref{thm:2} and using
(\ref{eq:sum-gamma-infty}) we obtain
\begin{equation}
	\label{eq:Tr-J-a-inv}
	\Tr \big( (\tilde{J}^{(\alpha)})^{-1} \big)
	=
	\frac{q^{\alpha+1}}{1-q^{\alpha+1}}
	\sum_{j=0}^{\infty}
	q^{j} \big(1-\gamma(j+1,\alpha+1;q)\big)
	=
	\frac{q^{\alpha+2}}{(1-q)(1-q^{\alpha+2})}.
\end{equation}

\subsection{The characteristic function, the orthogonality measure} \label{sec:6:5}

It follows from Theorem \ref{thm:Jacobi-J-a} that the Jacobi operator
$\tilde{J}^{(\alpha)}$ has a trace class resolvent, its spectrum
consists of countably many simple positive eigenvalues, denoted as
$\{\zeta_{k}^{(\alpha)} : k \geq 1\}$, and
\[
	\sum_{k=1}^{\infty}
	\frac{1}{\zeta_{k}^{(\alpha)}}
	=
	\Tr \big( (\tilde{J}^{(\alpha)})^{-1} \big) < \infty.
\]
The eigenvalues are supposed to be ordered increasingly, 
$0 < \zeta_{1}^{(\alpha)} < \zeta_{2}^{(\alpha)} < \zeta_{3}^{(\alpha)} < \ldots$.

The characteristic function of $\tilde{J}^{(\alpha)}$, as introduced
in Section~\ref{sec:2}, reads
\[
	\tilde{\frakF}^{(\alpha)}(z)
	=
	\prod_{k=1}^{\infty}
	\bigg(1-\frac{z}{\zeta_{k}^{(\alpha)}} \bigg).
\]
The function $\tilde{\frakF}^{(\alpha)}$ is entire and, as stated
in Corollary~\ref{cor:1},
\[
	\lim_{n \to \infty} 
	\frac{\tilde{p}_{n}^{(\alpha)}(z;q)}{\tilde{p}_{n}^{(\alpha)}(0;q)}
	=
	\tilde{\mathfrak{F}}^{(\alpha)}(z)
\]
locally uniformly with respect to $z \in \CC$.

The function $\tilde{\frakF}^{(\alpha)}$ turns out to be expressible in
terms of the $q$-Bessel function. Recall that the Jackson $q$-Bessel
functions of the second kind are
\[
	J_{\nu}^{(2)}(x;q)
	:=
	\frac{(q^{\nu+1};q)_{\infty}}{(q;q)_{\infty}}
	\Big( \frac{x}{2} \Big)^{\nu}
	\rphis{0}{1}{-}{q^{\nu+1}}{q,-\frac{q^{\nu+1}x^{2}}{4}}.
\]

\begin{theorem} \label{thm:Fchar-tld-a} 
The characteristic function
of the of the Jacobi operator $\tilde{J}^{(\alpha)}$ equals
\begin{equation}
	\label{eq:Fchar-alpha}
	\tilde{\frakF}^{(\alpha)}(z) =
	\rphis{0}{1}{-}{q^{\alpha+2}}{q,-q^{\alpha+2}z}
	=
	\frac{(q;q)_{\infty}}{(q^{\alpha+2};q)_{\infty}}
	z^{-(\alpha+1)/2} J_{\alpha+1}^{(2)}(2\sqrt{z};q).
\end{equation}
\end{theorem}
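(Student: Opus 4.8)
The plan is to identify $\tilde{\frakF}^{(\alpha)}$ through Corollary~\ref{cor:1}. By Theorem~\ref{thm:Jacobi-J-a} the operator $\tilde{J}^{(\alpha)}$ is positive with trace class inverse, so Corollary~\ref{cor:1} applies and gives
\[
	\tilde{\frakF}^{(\alpha)}(z)
	=
	\lim_{n\to\infty}
	\frac{\tilde{p}_{n}^{(\alpha)}(z;q)}{\tilde{p}_{n}^{(\alpha)}(0;q)}
\]
locally uniformly on $\CC$. Thus it suffices to evaluate this limit, and for that I would use the quasi-orthogonal representation~\eqref{eq:ptld-quasi-ortho}, which expresses $\tilde{p}_{n}^{(\alpha)}$ purely through the standard monic $q$-Laguerre polynomials $p_{n}^{(\alpha+1)}$ and $p_{n-1}^{(\alpha+1)}$, so that everything reduces to the asymptotics of the latter.

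The first key step, and the main technical point of the argument, is the auxiliary limit
\[
	\lim_{n\to\infty}
	\frac{p_{n}^{(\alpha+1)}(z;q)}{p_{n}^{(\alpha+1)}(0;q)}
	=
	\rphis{0}{1}{-}{q^{\alpha+2}}{q,-q^{\alpha+2}z}
\]
(locally uniformly), which holds despite the fact that the $q$-Laguerre moment problem is itself indeterminate. To prove it I would start from~\eqref{eq:qLaguerre-to-monic}, which upon dividing out the value at $z=0$ gives $p_{n}^{(\alpha+1)}(z;q)/p_{n}^{(\alpha+1)}(0;q)=\rphis{1}{1}{q^{-n}}{q^{\alpha+2}}{q,-q^{n+\alpha+2}z}$. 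Expanding this terminating series in powers of $z$, the coefficient of $z^{k}$ carries the factor $(q^{-n};q)_{k}\,q^{kn}$; writing $(q^{-n};q)_{k}=q^{\binom{k}{2}-kn}(-1)^{k}\prod_{j=0}^{k-1}(1-q^{n-j})$ shows that $(q^{-n};q)_{k}\,q^{kn}\to(-1)^{k}q^{\binom{k}{2}}$ as $n\to\infty$, while the surviving $q$-powers combine to $q^{k(k+\alpha+1)}$; a termwise passage to the limit then reproduces exactly the coefficients of $\rphis{0}{1}{-}{q^{\alpha+2}}{q,-q^{\alpha+2}z}$. The interchange of limit and summation is legitimate because for $n\geq k$ one has $0<\prod_{j=0}^{k-1}(1-q^{n-j})\leq1$, so the $k$-th term is bounded, uniformly in $n$, by the summable majorant $q^{k(k-1)}q^{k(\alpha+2)}|z|^{k}/\big((q^{\alpha+2};q)_{k}(q;q)_{k}\big)$; a uniform Tannery argument, as already used in the proof of Theorem~\ref{thm:1}, then delivers the asserted locally uniform convergence.

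It remains to feed this into~\eqref{eq:ptld-quasi-ortho}. Dividing both the numerator and the denominator of $\tilde{p}_{n}^{(\alpha)}(z;q)/\tilde{p}_{n}^{(\alpha)}(0;q)$ by $p_{n}^{(\alpha+1)}(0;q)$, the only non-obvious ingredient is the scalar
\[
	\tilde{\mu}_{n}^{(\alpha)}\,
	\frac{p_{n-1}^{(\alpha+1)}(0;q)}{p_{n}^{(\alpha+1)}(0;q)}
	=
	-\frac{(1-q^{n})\,r_{n}^{(\alpha)}}{1-q^{n+\alpha+1}},
\]
obtained by evaluating~\eqref{eq:qLaguerre-to-monic} at $z=0$ and inserting $\tilde{\mu}_{n}^{(\alpha)}$ from~\eqref{eq:BD-tilde}. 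The crucial elementary limit here is $r_{n}^{(\alpha)}\to q$: starting from~\eqref{eq:1-r-n-alpha} together with the first-order asymptotics $1-\gamma(n,\alpha+1;q)\sim q^{n}(1-q^{\alpha+1})/(1-q)$ (got by expanding the infinite $q$-products defining $\gamma$, and using $\gamma(n+1,\alpha;q)\to1$) one finds $1-r_{n}^{(\alpha)}\to1-q$. Hence the displayed scalar tends to $-q$; the normalized numerator and denominator converge respectively to $(1-q)\,\rphis{0}{1}{-}{q^{\alpha+2}}{q,-q^{\alpha+2}z}$ and to $1-q$, and after cancelling the common factor $1-q$ the first equality of~\eqref{eq:Fchar-alpha} follows. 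The second equality is then immediate from the definition of $J_{\alpha+1}^{(2)}$: substituting $x=2\sqrt{z}$ and $\nu=\alpha+1$ gives $(x/2)^{\nu}=z^{(\alpha+1)/2}$ and $-q^{\nu+1}x^{2}/4=-q^{\alpha+2}z$, so solving for the $_{0}\phi_{1}$ reproduces the stated $q$-Bessel form.
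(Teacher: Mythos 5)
Your proof is correct, and it organizes the computation differently from the paper. Both arguments start from Corollary~\ref{cor:1} and the explicit $\rphis{1}{1}{q^{-n}}{q^{\alpha+1}}{q,\cdot}$ representation \eqref{eq:qLaguerre-to-monic}, but the paper works with the convex-combination form \eqref{eq:p-tld-convex} (mixing $p_n^{(\alpha)}$ and $p_n^{(\alpha+1)}$), derives a closed formula \eqref{eq:p-tld-nk-norm} for the normalized Taylor coefficients $\tilde{p}_{n,k}^{(\alpha)}/\tilde{p}_{n,0}^{(\alpha)}$, and then extracts the coefficients of the limit function by Cauchy's integral formula — the point being that the locally uniform convergence already guaranteed by Corollary~\ref{cor:1} lets them pass to the limit coefficientwise with no further uniformity argument. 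You instead use the quasi-orthogonal form \eqref{eq:ptld-quasi-ortho} (mixing $p_n^{(\alpha+1)}$ and $p_{n-1}^{(\alpha+1)}$) and first establish the function-level limit $p_n^{(\alpha+1)}(z;q)/p_n^{(\alpha+1)}(0;q)\to\rphis{0}{1}{-}{q^{\alpha+2}}{q,-q^{\alpha+2}z}$. Note that this auxiliary limit is \emph{not} covered by Corollary~\ref{cor:1}, since the $q$-Laguerre moment problem is indeterminate, so you rightly have to supply your own uniformity argument; your Tannery-type domination (using $0<\prod_{j=0}^{k-1}(1-q^{n-j})\le 1$ for $n\ge k$, and that the $k$-th term vanishes for $n<k$) does this correctly. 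The remaining scalar computation $\tilde{\mu}_n^{(\alpha)}\,p_{n-1}^{(\alpha+1)}(0;q)/p_n^{(\alpha+1)}(0;q)=-(1-q^n)r_n^{(\alpha)}/(1-q^{n+\alpha+1})\to -q$, via $1-\gamma(n,\alpha+1;q)\sim q^n(1-q^{\alpha+1})/(1-q)$ and hence $r_n^{(\alpha)}\to q$, checks out, and the cancellation of the factor $1-q\neq 0$ is legitimate. What your route buys is the classical limit relation between $q$-Laguerre polynomials and the $q$-Bessel function $J_{\alpha+2}^{(2)}$-type series as an explicit intermediate statement; what the paper's route buys is that it never needs a separate uniform-convergence estimate, since all the analytic work is delegated to Corollary~\ref{cor:1}.
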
                                       
\begin{proof} Let us write
\[
	p_{n}^{(\alpha)}(x;q)
	=
	\sum_{k=0}^{n}
	p_{n,k}^{(\alpha)} x^{k}, \quad
	\tilde{p}_{n}^{(\alpha)}(x;q)
	=
	\sum_{k=0}^{n} \tilde{p}_{n,k}^{(\alpha)} x^{k}.
\]
The $q$-Laguerre polynomials are known explicitly, see \eqref{eq:qLaguerre}
and \eqref{eq:qLaguerre-to-monic}. With regard to \eqref{eq:p-tld-convex},
we find that, for $n \geq k \geq 0$,
\begin{align}
	\nonumber
	\frac{\tilde{p}_{n,k}^{(\alpha)}}{\tilde{p}_{n,0}^{(\alpha)}} 
	&= 
	r_{n}^{(\alpha)}
	\frac{p_{n,k}^{(\alpha)}}{\tilde{p}_{n,0}}
	+
	(1-r_{n}^{(\alpha)})
	\frac{p_{n,k}^{(\alpha+1)}}{\tilde{p}_{n,0}} \\
	\label{eq:p-tld-nk-norm}
	&=
	\frac
	{(-1)^{k} q^{k(k-1) + k(\alpha+1)}(q^{n-k+1};q)_{k}}
	{(q^{\alpha+1};q)_{k} (q;q)_{k}}
	\Bigg(
	1-\frac{1-q^{k}}{1-q^{k+\alpha+1}}
	\frac{(q^{n+1};q)_{\infty}}{(q^{n+\alpha+2};q)_{\infty}}
	\Bigg).
\end{align}
Further we write
\[
	\tilde{\frakF}^{(\alpha)}(z)
	=
	\sum_{k=0}^{\infty}
	F_{k} z^{k},
\]
and chose any $\calC^{1}$ positively oriented Jordan curve $C$ with
$0$ in its interior. Then, owing to the locally uniform convergence,
for every $k \in \NN_{0}$ it holds true that
\begin{align*}
	F_{k} &=
	\frac{1}{2\pi i}
	\int_{C}
	\frac{\tilde{\frakF}^{(\alpha)}(z)}{z^{k+1}} \ud z
	=
	\frac{1}{2\pi i}
	\int_{C}
	\lim_{n \to \infty}
	\frac
	{\tilde{p}_{n}^{(\alpha)}(z;q)}
	{\tilde{p}_{n}^{(\alpha)}(0;q) z^{k+1}} \ud z\\
	&= 
	\frac{1}{2\pi i}
	\lim_{n \to \infty}
	\int_{C} 
	\frac
	{\tilde{p}_{n}^{(\alpha)}(z;q)}
	{\tilde{p}_{n}^{(\alpha)}(0;q) z^{k+1}} \ud z
	=
	\lim_{n \to \infty}
	\frac{\tilde{p}_{n,k}^{(\alpha)}}{\tilde{p}_{n,0}^{(\alpha)}}.
\end{align*}
In view of \eqref{eq:p-tld-nk-norm}, we have
\[
	\lim_{n \to \infty}
	\frac
	{\tilde{p}_{n,k}^{(\alpha)}}
	{\tilde{p}_{n,0}^{(\alpha)}}
	=
	\frac
	{(-1)^{k} q^{k(k-1) + k(\alpha+1)}}
	{(q^{\alpha+1};q)_{k}(q;q)_{k}}
	\bigg(
	1-\frac{1 - q^{k}}{1 - q^{k+\alpha+1}}
	\bigg)
	=
	\frac
	{(-1)^{k} q^{k(k+\alpha+1)}}
	{(q^{\alpha+2};q)_{k}(q;q)_{k}}.
\]
Whence
\[
	\tilde{\frakF}^{(\alpha)}(z)
	=
	\sum_{k=0}^{\infty}
	\frac
	{(-1)^{k} q^{k\,(k+\alpha+1)}}
	{(q^{\alpha+2};q)_{k}(q;q)_{k}}
	z^{k}
	=
	\rphis{0}{1}{-}{q^{\alpha+2}}{q,-q^{\alpha+2} z}.
\]
This shows \eqref{eq:Fchar-alpha}. 
\end{proof}
\begin{remark} 
Notice that from \eqref{eq:Fchar-alpha} one can rederive
the trace formula \eqref{eq:Tr-J-a-inv},
\[
	\Tr \big( (\tilde{J}^{(\alpha)})^{-1} \big)
	=
	-\frac{\ud \tilde{\frakF}^{(\alpha)}(0)}{\ud z}
	=
	\frac{q^{\alpha+2}}{(1-q)(1-q^{\alpha+2})}.
\]
\end{remark}

The moment functional $\tilde{\Lambda}^{(\alpha)}$ for the modified
$q$-Laguerre polynomials is described in Proposition~\ref{thm:Mtld-n-a}.
The corresponding orthogonality measure $\tilde{\mu}^{(\alpha)}$
is a unique probability Borel measure on $\RR$ such that for
every polynomial $f$,
\[
	\tilde{\Lambda}^{(\alpha)}(f)
	=
	\int_\RR f(x) \ud \tilde{\mu}^{(\alpha)}(x).
\]
As an immediate corollary of Theorem~\ref{thm:Fchar-tld-a} and some
general results including Eq. \eqref{mu-W-F} we have the following description of $\tilde{\mu}^{(\alpha)}$.

\begin{corollary} 
The orthogonality measure $\tilde{\mu}^{(\alpha)}$
is supported on the set of positive roots of the function $J_{\alpha+1}^{(2)}(2\sqrt{z};q)$.
For every positive root $\zeta$ of $J_{\alpha+1}^{(2)}(2\sqrt{z};q)$,
\[
	\tilde{\mu}^{(\alpha)}(\{\zeta\})
	=
	\Bigg(
	\sum_{n=0}^{\infty}
	\tilde{P}_{n}^{(\alpha)}(\zeta)^{2}
	\Bigg)^{-1}
	=
	-\tilde{\frakW}^{(\alpha)}(\zeta)
	\bigg(
	\frac{\ud \tilde{\frakF}^{(\alpha)}(\zeta)}{\ud z}
	\bigg)^{-1},
\]
where
\[
	\tilde{\frakW}^{(\alpha)}(z)
	=
	\tilde{w}^{(\alpha)}(z)
	\tilde{\frakF}^{(\alpha)}(z)
\]
and $\tilde{w}^{(\alpha)}(z)$ is the corresponding Weyl function.
\end{corollary}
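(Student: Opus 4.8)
The plan is to deduce everything by specializing the general theory of Subsection~\ref{sec:5:1} to the operator $\tilde{J}^{(\alpha)}$. The essential observation is that, by Theorem~\ref{thm:Jacobi-J-a}, the operator $\tilde{J}^{(\alpha)}$ is positive-definite, self-adjoint and possesses a trace class inverse, so it meets exactly the standing assumptions under which Subsection~\ref{sec:5:1} was developed. Therefore every formula derived there transfers verbatim to the tilded objects $\tilde{\frakF}^{(\alpha)}$, $\tilde{\frakW}^{(\alpha)}$, $\tilde{w}^{(\alpha)}$, and so on. In particular the support of $\tilde{\mu}^{(\alpha)}$ coincides with the spectrum $\sigma(\tilde{J}^{(\alpha)})$, which consists of the simple positive eigenvalues $\{\zeta_k^{(\alpha)} : k \geq 1\}$, and these are precisely the simple zeros of the entire characteristic function $\tilde{\frakF}^{(\alpha)}$.

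To identify this zero set with the positive roots of the $q$-Bessel function I would invoke Theorem~\ref{thm:Fchar-tld-a}, which gives
\[
    \tilde{\frakF}^{(\alpha)}(z)
    = \frac{(q;q)_\infty}{(q^{\alpha+2};q)_\infty}\, z^{-(\alpha+1)/2}\, J_{\alpha+1}^{(2)}(2\sqrt{z};q).
\]
Unwinding the definition of the Jackson $q$-Bessel function of the second kind shows that $J_{\alpha+1}^{(2)}(2\sqrt{z};q)$ is $z^{(\alpha+1)/2}$ times an entire power series in $z$ — a nonzero constant multiple of $\tilde{\frakF}^{(\alpha)}(z)$ — so on the positive half-line the prefactor $z^{-(\alpha+1)/2}$ is finite and non-vanishing. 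Hence the positive roots of $J_{\alpha+1}^{(2)}(2\sqrt{z};q)$ coincide with the zeros of $\tilde{\frakF}^{(\alpha)}$, that is, with the eigenvalues $\zeta_k^{(\alpha)}$, which establishes the support statement.

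For the two mass formulas I would simply specialize the corresponding identities recalled in Subsection~\ref{sec:5:1}. The first, $\tilde{\mu}^{(\alpha)}(\{\zeta\}) = \big(\sum_{n=0}^\infty \tilde{P}_n^{(\alpha)}(\zeta)^2\big)^{-1}$, is the classical Christoffel-type formula (cited there to \cite{Shohat1943,Akhiezer1965}) applied to the orthonormal modified $q$-Laguerre polynomials $\tilde{P}_n^{(\alpha)}$. The second follows from the definition $\tilde{\frakW}^{(\alpha)}(z) = \tilde{w}^{(\alpha)}(z)\,\tilde{\frakF}^{(\alpha)}(z)$, which is \eqref{eq:def-W} written for $\tilde{J}^{(\alpha)}$, combined with \eqref{mu-W-F}, yielding $\tilde{\mu}^{(\alpha)}(\{\zeta\}) = -\tilde{\frakW}^{(\alpha)}(\zeta)/(\tilde{\frakF}^{(\alpha)})'(\zeta)$ at each eigenvalue $\zeta$.

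I do not expect any genuinely hard step here: this is an application corollary, and the bulk of the work is bookkeeping to confirm that the tilded operator inherits the hypotheses of Subsection~\ref{sec:5:1} and that the notation matches. The one point deserving care is the branch factor $z^{-(\alpha+1)/2}$ above, where I must verify that passing between $\tilde{\frakF}^{(\alpha)}$ and $J_{\alpha+1}^{(2)}(2\sqrt{z};q)$ neither creates nor destroys zeros on $(0,\infty)$; since that factor is analytic and nonzero throughout the open positive half-line, this causes no real difficulty.
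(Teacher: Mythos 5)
Your proposal is correct and follows exactly the route the paper intends: the paper states this as an immediate consequence of Theorem~\ref{thm:Jacobi-J-a} (which puts $\tilde{J}^{(\alpha)}$ under the standing hypotheses of Subsection~\ref{sec:5:1}), Theorem~\ref{thm:Fchar-tld-a} (identifying the zeros of $\tilde{\frakF}^{(\alpha)}$ with the positive roots of $J_{\alpha+1}^{(2)}(2\sqrt{z};q)$ via the non-vanishing prefactor), and the general mass formulas including \eqref{mu-W-F}. Your attention to the factor $z^{-(\alpha+1)/2}$ on $(0,\infty)$ is the right detail to check, and nothing further is needed.
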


\subsection{The characteristic function continued} \label{sec:6:6}

Applying~\eqref{eq:Fchar-T} to the modified $q$-Laguerre polynomials
we can re-derive formula~\eqref{eq:Fchar-alpha} for the characteristic
function.

In view of Lemma~\ref{thm:sum-pi} and in agreement with definition~\eqref{eq:Tk} we put             
\begin{equation}
	\label{eq:Tk-Laguerre-mod}
	\tilde{T}_{k}^{(\alpha)}
	=
	\frac
	{(q^{\alpha+2};q)_{k} \big(1-\gamma(1,\alpha+1;q)\big)}
	{(q;q)_{k}\big(1-\gamma(k+1,\alpha+1;q)\big)}, \quad k \geq 0.
\end{equation}

\begin{lemma} 
The sequences $\{\tilde{\lambda}_{k}^{(\alpha)} \tilde{\pi}_{k}^{(\alpha)} \}$
and $\{\tilde{T}_{k}^{(\alpha)} \}$, as given in \eqref{eq:lbd-pi-Laguerre-mod}
and \eqref{eq:Tk-Laguerre-mod}, respectively, satisfy the equations
\begin{equation}
	\label{eq:Tk-Tm-sum}
	\sum_{k=0}^{m-1}
	(q^{k};q^{-1})_{\ell}\,
	\frac
	{\tilde{T}_{k}^{(\alpha)}
	(\tilde{T}_{m}^{(\alpha)}-\tilde{T}_{k}^{(\alpha)})}
	{\tilde{\lambda}_{k}^{(\alpha)} \tilde{\pi}_{k}^{(\alpha)}}
	=
	\frac
	{q^{2\ell+\alpha+2} (q^{m};q^{-1})_{\ell+1}}
	{(1-q^{\ell+1}) (1-q^{\ell+\alpha+2})}\,
	\tilde{T}_{m}^{(\alpha)}, \quad m,\ell \geq 0,
\end{equation}
and
\begin{equation}
	\label{eq:Tk-sum}
	\sum_{k=0}^{\infty}
	(q^{k};q^{-1})_{\ell}\,
	\frac
	{\tilde{T}_{k}^{(\alpha)}}
	{\tilde{\lambda}_{k}^{(\alpha)} \tilde{\pi}_{k}^{(\alpha)}}
	=
	\frac
	{q^{2\ell+\alpha+2}}
	{(1-q^{\ell+1}) (1-q^{\ell+\alpha+2})}, \quad \ell \geq 0.
\end{equation}
\end{lemma}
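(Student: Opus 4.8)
The plan is to reduce both identities to the summation formulas already contained in Lemma~\ref{thm:gamma}, after one preliminary simplification of the quotient $\tilde{T}_{k}^{(\alpha)}/(\tilde{\lambda}_{k}^{(\alpha)}\tilde{\pi}_{k}^{(\alpha)})$. First I would combine \eqref{eq:lbd-pi-Laguerre-mod} and \eqref{eq:Tk-Laguerre-mod}: the common factor $1-\gamma(1,\alpha+1;q)$ cancels, and using $(q^{\alpha+1};q)_{k+1}=(1-q^{\alpha+1})(q^{\alpha+2};q)_{k}$ one gets the clean expression
\[
\frac{\tilde{T}_{k}^{(\alpha)}}{\tilde{\lambda}_{k}^{(\alpha)}\tilde{\pi}_{k}^{(\alpha)}}
=
\frac{q^{\alpha+1}}{1-q^{\alpha+1}}\,q^{k}\big(1-\gamma(k+1,\alpha+1;q)\big).
\]
Alongside it I would record the companion identity
\[
\tilde{T}_{k}^{(\alpha)}\big(1-\gamma(k+1,\alpha+1;q)\big)
=
\big(1-\gamma(1,\alpha+1;q)\big)\,\frac{(q^{\alpha+2};q)_{k}}{(q;q)_{k}},
\]
which is crucial because it eliminates the awkward denominator $1-\gamma(k+1,\alpha+1;q)$ hidden inside $\tilde{T}_{k}^{(\alpha)}$.

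For \eqref{eq:Tk-sum} I would use that $(q^{k};q^{-1})_{\ell}=0$ for $0\le k<\ell$ while $(q^{k};q^{-1})_{\ell}=(q^{k-\ell+1};q)_{\ell}$ otherwise, so the substitution $k=\ell+j$ leaves a series in $j\ge0$ with summand $q^{\ell+j}(q^{j+1};q)_{\ell}\big(1-\gamma(\ell+j+1,\alpha+1;q)\big)$. Splitting off the constant $1$ gives, via \eqref{eq:q-sup-k-sum}, the value $1/(1-q^{\ell+1})$; for the remaining piece I would invoke the evident multiplicativity $\gamma(u,v;q)\gamma(u+v,w;q)=\gamma(u,v+w;q)$, which turns $(q^{j+1};q)_{\ell}\gamma(\ell+j+1,\alpha+1;q)=\gamma(j+1,\ell;q)\gamma(\ell+j+1,\alpha+1;q)$ into $\gamma(j+1,\ell+\alpha+1;q)$, so that \eqref{eq:sum-gamma-infty} with $\beta=\ell+\alpha+1$ applies and yields $1/(1-q^{\ell+\alpha+2})$. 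Combining $q^{\ell}\big(1/(1-q^{\ell+1})-1/(1-q^{\ell+\alpha+2})\big)$ with the prefactor $q^{\alpha+1}/(1-q^{\alpha+1})$ and simplifying produces the right-hand side of \eqref{eq:Tk-sum}.

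For \eqref{eq:Tk-Tm-sum} I would split the summand as $\tilde{T}_{m}^{(\alpha)}\cdot\tilde{T}_{k}^{(\alpha)}/(\tilde{\lambda}_{k}^{(\alpha)}\tilde{\pi}_{k}^{(\alpha)})-(\tilde{T}_{k}^{(\alpha)})^{2}/(\tilde{\lambda}_{k}^{(\alpha)}\tilde{\pi}_{k}^{(\alpha)})$, giving two finite sums $\tilde{T}_{m}^{(\alpha)}S_{1}-S_{2}$. The sum $S_1$ is treated exactly as above but with finite ranges, using the finite companions \eqref{eq:q-sup-k-sum-finite} and the first identity of Lemma~\ref{thm:gamma}, together with $(q^{m-\ell};q)_{\ell+1}=(q^{m};q^{-1})_{\ell+1}$ and $\gamma(m,\alpha+2;q)=(1-q^{m})\gamma(m+1,\alpha+1;q)$ (the latter from $(q^{m};q)_{\infty}=(1-q^{m})(q^{m+1};q)_{\infty}$); via multiplicativity these combine to $\gamma(m-\ell,\ell+\alpha+2;q)=(q^{m};q^{-1})_{\ell+1}\gamma(m+1,\alpha+1;q)$, expressing $S_1$ through $(q^{m};q^{-1})_{\ell+1}$ and $\gamma(m+1,\alpha+1;q)$. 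For $S_2$ I would apply the companion identity, then \eqref{eq:q-sup-k-q-beta-sum} (with $\beta=\alpha+\ell+1$) after the index shift, and the factorization $(q^{\alpha+2};q)_{m}=(q^{\alpha+2};q)_{\ell}(1-q^{\alpha+\ell+2})(q^{\alpha+\ell+3};q)_{m-1-\ell}$, to rewrite $S_2$ as a multiple of $\tilde{T}_{m}^{(\alpha)}\big(1-\gamma(m+1,\alpha+1;q)\big)$.

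The decisive step, and the main obstacle, is the final recombination $\tilde{T}_{m}^{(\alpha)}S_{1}-S_{2}$: one must engineer the cancellation of the term proportional to $\gamma(m+1,\alpha+1;q)$ that comes from $S_1$ against the $-\gamma(m+1,\alpha+1;q)$ produced by $S_2$, so that what survives is $\tilde{T}_{m}^{(\alpha)}(q^{m};q^{-1})_{\ell+1}\big(1/(1-q^{\ell+1})-1/(1-q^{\ell+\alpha+2})\big)$; multiplying by $q^{\alpha+1}/(1-q^{\alpha+1})$ then gives precisely the right-hand side of \eqref{eq:Tk-Tm-sum}. This cancellation works only because the denominator $1-\gamma(k+1,\alpha+1;q)$ inside $\tilde{T}_{k}^{(\alpha)}$ is removed before summation (hence the companion identity and the two-piece split) and because the leftover factor $1-\gamma(m+1,\alpha+1;q)$ of $\tilde{T}_{m}^{(\alpha)}$ itself is matched between the two contributions. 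As an alternative to the direct proof of \eqref{eq:Tk-sum}, one may simply divide \eqref{eq:Tk-Tm-sum} by $\tilde{T}_{m}^{(\alpha)}$ and let $m\to\infty$, since $(q^{m};q^{-1})_{\ell+1}\to1$, $\tilde{T}_{m}^{(\alpha)}\to\infty$, and the $(\tilde{T}_{k}^{(\alpha)})^{2}/(\tilde{\lambda}_{k}^{(\alpha)}\tilde{\pi}_{k}^{(\alpha)})$ part is summable of order $q^{k}$.
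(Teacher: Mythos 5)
Your proposal is correct and follows essentially the same route as the paper: the same simplification of $\tilde{T}_{k}^{(\alpha)}/(\tilde{\lambda}_{k}^{(\alpha)}\tilde{\pi}_{k}^{(\alpha)})$ to $\tfrac{q^{k+\alpha+1}}{1-q^{\alpha+1}}\bigl(1-\gamma(k+1,\alpha+1;q)\bigr)$, the same index shift $k=\ell+j$ and appeal to Lemma~\ref{thm:gamma} and \eqref{eq:q-sup-k-q-beta-sum}, and the same decomposition of \eqref{eq:Tk-Tm-sum} into $\tilde{T}_{m}^{(\alpha)}S_{1}-S_{2}$ with the companion identity eliminating $1-\gamma(k+1,\alpha+1;q)$ before summing $S_2$. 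The only blemish is cosmetic: the common prefactor carried by $S_1$ and $S_2$ after the index shift is $q^{\ell+\alpha+1}/(1-q^{\alpha+1})$, not $q^{\alpha+1}/(1-q^{\alpha+1})$, which is where the missing $q^{\ell}$ needed for the exponent $2\ell+\alpha+2$ comes from.
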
 
\begin{proof} 
First let us consider the sum
\[
	\sum_{k=0}^{m-1}
	(q^{k};q^{-1})_{\ell}\,
	\frac
	{\tilde{T}_{k}^{(\alpha)}}
	{\tilde{\lambda}_{k}^{(\alpha)} \tilde{\pi}_{k}^{(\alpha)}}, 
	\quad m,\ell \geq 0.
\]
Notice that this expression vanishes for $m \leq \ell$. Assuming that
$m \geq \ell+1$, the sum equals
\begin{multline*}
	\frac{q^{\alpha+1}}{1 - q^{\alpha+1}}
	\sum_{k=0}^{m-1}
	q^{k} (q^{k};q^{-1})_{\ell}
	\big(1 - \gamma(k+1,\alpha+1;q) \big) \\
	=
	\frac{q^{\ell+\alpha+1}}{1 - q^{\alpha+1}}
	\sum_{k=0}^{m-\ell-1}
	q^{k} (q^{k+\ell};q^{-1})_{\ell}
	\bigg(
	1-
	\frac
	{(q^{k+\ell+1};q)_{\infty}}
	{(q^{k+\ell+\alpha+2};q)_{\infty}}
	\bigg).
\end{multline*}
Notice that $(q^{k+\ell};q^{-1})_{\ell} = (q^{k+1};q)_{\ell}$. Applying
\eqref{eq:q-sup-k-sum-finite} and afterwards \eqref{eq:q-sup-k-q-beta-sum}
we get the expression
\begin{align*}
	\frac{q^{\ell+\alpha+1}}{1-q^{\alpha+1}}
	&\bigg(
	\frac{(q^{m-\ell};q)_{\ell+1}}{1-q^{\ell+1}}
	-\frac{(q;q)_{\infty}}{(q^{\ell+\alpha+2};q)_{\infty}}
	\sum_{k=0}^{m-\ell-1}
	q^{k} \frac{(q^{\ell+\alpha+2};q)_{k}}{(q;q)_{k}}
	\bigg) \\
	&=
	\frac{q^{\ell+\alpha+1}}{1-q^{\alpha+1}}
	\bigg(
	\frac{(q^{m-\ell};q)_{\ell+1}}{1-q^{\ell+1}}
	-\frac{(q;q)_{\infty}}{(q^{\ell+\alpha+2};q)_{\infty}}
	\frac{(q^{\ell+\alpha+3};q)_{m-\ell-1}}{(q;q)_{m-\ell-1}}
	\bigg)\\
	&=
	\frac{q^{\ell+\alpha+1}}{1-q^{\alpha+1}}
	\bigg(
	\frac{(q^{m-\ell};q)_{\ell+1}}{1-q^{\ell+1}}
	-\frac
	{(q^{m-\ell};q)_{\ell+1}(q^{m+1};q)_{\infty}}
	{(1-q^{\ell+\alpha+2})(q^{m+\alpha+2};q)_{\infty}}
	\bigg).
\end{align*}
Finally we find that
\begin{equation}
	\label{eq:Tk-sum-finite-res}
	\sum_{k=0}^{m-1}
	(q^{k};q^{-1})_{\ell}
	\frac
	{\tilde{T}_{k}^{(\alpha)}}
	{\tilde{\lambda}_{k}^{(\alpha)}\tilde{\pi}_{k}^{(\alpha)}} 
	= 
	\frac
	{q^{\ell+\alpha+1}(q^{m-\ell};q)_{\ell+1}}
	{(1-q^{\alpha+1})(1-q^{\ell+\alpha+2})}
	\bigg(
	\frac
	{q^{\ell+1}(1-q^{\alpha+1})}
	{1-q^{\ell+1}}
	+ 1 - \gamma(m+1,\alpha+1;q)
	\bigg).
\end{equation}
Note that \eqref{eq:Tk-sum-finite-res} holds for $m \leq \ell$ as
well since both sides vanish.

Sending $m \to \infty$ in \eqref{eq:Tk-sum-finite-res} we get \eqref{eq:Tk-sum}.

Next, again assuming that $m \geq \ell+1$, we consider the sum
\begin{align*}
	\sum_{k=0}^{m-1}
	(q^{k};q^{-1})_{\ell}\,
	\frac
	{(\tilde{T}_{k}^{(\alpha)})^{2}}
	{\tilde{\lambda}_{k}^{(\alpha)}\tilde{\pi}_{k}^{(\alpha)}} 
	&= 
	\big(1-\gamma(1,\alpha+1;q)\big)
	\frac{q^{\alpha+1}}{1-q^{\alpha+1}}
	\sum_{k=0}^{m-1}
	q^{k} (q^{k};q^{-1})_{\ell}\,
	\frac{(q^{\alpha+2};q)_{k}}{(q;q)_{k}} \\
	&=
	\big(1-\gamma(1,\alpha+1;q)\big)
	\frac
	{q^{\ell+\alpha+1}(q^{\alpha+2};q)_{\ell}}
	{1-q^{\alpha+1}}
	\sum_{k=0}^{m-\ell-1}
	q^{k}\, \frac{(q^{\ell+\alpha+2};q)_{k}}{(q;q)_{k}}.
\end{align*}
Applying \eqref{eq:q-sup-k-q-beta-sum} we get the expression
\begin{align*}
	\big(1-\gamma(1,\alpha+1;q) \big)
	\frac
	{q^{\ell+\alpha+1} (q^{\alpha+2};q)_{\ell} (q^{\ell+\alpha+3};q)_{m-\ell-1}}
	{(1-q^{\alpha+1})(q;q)_{m-\ell-1}} \\
	=
	\big(1-\gamma(1,\alpha+1;q) \big)
	\frac
	{q^{\ell+\alpha+1}\,(q^{m};q^{-1})_{\ell+1}(q^{\alpha+2};q)_{m}}
	{(1-q^{\alpha+1}) (1-q^{\ell+\alpha+2})(q;q)_{m}}.
\end{align*}
With regard to \eqref{eq:Tk-Laguerre-mod} we obtain
\begin{equation}
	\label{eq:Tk-sq-sum-finite}
	\sum_{k=0}^{m-1}
	(q^{k};q^{-1})_{\ell}\,
	\frac
	{(\tilde{T}_{k}^{(\alpha)})^{2}}
	{\tilde{\lambda}_{k}^{(\alpha)}\tilde{\pi}_{k}^{(\alpha)}}
	=
	\frac
	{q^{\ell+\alpha+1}\,(q^{m};q^{-1})_{\ell+1}}
	{(1-q^{\alpha+1})(1-q^{\ell+\alpha+2})}
	\big(1 - \gamma(k+1,\alpha+1;q) \big) \tilde{T}_{m}^{(\alpha)}.
\end{equation}

A combination of \eqref{eq:Tk-sum-finite-res} and \eqref{eq:Tk-sq-sum-finite}
gives \eqref{eq:Tk-Tm-sum}. 
\end{proof}

\begin{proposition} 
The sequences $(\tilde{\lambda}_{k}^{(\alpha)} \pi_{k} )$
and $(\tilde{T}_{k}^{(\alpha)})$, as given in \eqref{eq:lbd-pi-Laguerre-mod}
and \eqref{eq:Tk-Laguerre-mod}, respectively, satisfy the equations,
for $m \geq 1$ and $\ell \geq 0$,
\begin{equation}
	\label{eq:Fchar-m-term}
	\sum_{0 \leq k_{1} < k_{2} < \ldots <k_{m}}
	(q^{k_{1}};q^{-1})_{\ell}\,
	\frac
	{\tilde{T}_{k_{1}}^{(\alpha)}
	(\tilde{T}_{k_{2}}^{(\alpha)} - \tilde{T}_{k_{1}}^{(\alpha)})
	\dots
	(\tilde{T}_{k_{m}}^{(\alpha)} - \tilde{T}_{k_{m-1}}^{(\alpha)})}
	{\tilde{\lambda}_{k_{1}}^{(\alpha)} \tilde{\pi}_{k_{1}}^{(\alpha)}
	\dots
	\tilde{\lambda}_{k_{m-1}}^{(\alpha)} \tilde{\pi}_{k_{m-1}}^{(\alpha)}
	\tilde{\lambda}_{k_{m}}^{(\alpha)} \tilde{\pi}_{k_{m}}^{(\alpha)}}
	=
	\frac
	{q^{m\,(2\ell+m+\alpha+1)}}
	{(q^{\ell+1};q)_{m}(q^{\ell+\alpha+2};q)_{m}}.
\end{equation}
\end{proposition}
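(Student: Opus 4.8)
The plan is to prove \eqref{eq:Fchar-m-term} by induction on $m\geq 1$, using the two summation identities \eqref{eq:Tk-sum} and \eqref{eq:Tk-Tm-sum} of the preceding lemma as the base case and the engine of the inductive step, respectively. Write $A_m(\ell)$ for the left-hand side of \eqref{eq:Fchar-m-term}. Before manipulating the iterated sum I would first record a positivity observation that legitimizes rearranging it freely. Since $(q^{k_1};q^{-1})_\ell=\prod_{j=0}^{\ell-1}(1-q^{k_1-j})$ contains the vanishing factor $1-q^{0}$ whenever $0\leq k_1\leq \ell-1$, every summand with $k_1<\ell$ is zero, whereas for $k_1\geq\ell$ all factors $1-q^{k_1-j}$ are positive. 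As $\tilde{T}_k^{(\alpha)}$ is a strictly increasing sequence of positive numbers (a partial sum of the positive $\tilde{\pi}_j^{(\alpha)}$) and $\tilde{\lambda}_k^{(\alpha)},\tilde{\pi}_k^{(\alpha)}>0$, every term of $A_m(\ell)$ is nonnegative, so by Tonelli's theorem the summation may be performed in any order.

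For the base case $m=1$ the left-hand side of \eqref{eq:Fchar-m-term} is exactly the series in \eqref{eq:Tk-sum}, giving
\[
	A_1(\ell)=\frac{q^{2\ell+\alpha+2}}{(1-q^{\ell+1})(1-q^{\ell+\alpha+2})},
\]
which is precisely the right-hand side of \eqref{eq:Fchar-m-term} at $m=1$.

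For the inductive step I would carry out the innermost summation, over the smallest index $k_1$ subject to $0\leq k_1<k_2$. The only factors of the summand that depend on $k_1$ are $(q^{k_1};q^{-1})_\ell$, the pair $\tilde{T}_{k_1}^{(\alpha)}(\tilde{T}_{k_2}^{(\alpha)}-\tilde{T}_{k_1}^{(\alpha)})$, and $1/(\tilde{\lambda}_{k_1}^{(\alpha)}\tilde{\pi}_{k_1}^{(\alpha)})$; the remaining telescoping factors $\tilde{T}_{k_3}^{(\alpha)}-\tilde{T}_{k_2}^{(\alpha)},\dots$ do not involve $k_1$. Hence the $k_1$-sum is exactly the left-hand side of \eqref{eq:Tk-Tm-sum} with $m$ replaced by $k_2$, and applying that identity collapses it to $\frac{q^{2\ell+\alpha+2}}{(1-q^{\ell+1})(1-q^{\ell+\alpha+2})}\,(q^{k_2};q^{-1})_{\ell+1}\,\tilde{T}_{k_2}^{(\alpha)}$. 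After relabelling $k_2,\dots,k_m$ as $k_1,\dots,k_{m-1}$ the remaining $(m-1)$-fold sum is precisely $A_{m-1}(\ell+1)$, which yields the recursion
\[
	A_m(\ell)=\frac{q^{2\ell+\alpha+2}}{(1-q^{\ell+1})(1-q^{\ell+\alpha+2})}\,A_{m-1}(\ell+1).
\]

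It then remains to insert the inductive hypothesis, $A_{m-1}(\ell+1)=q^{(m-1)(2\ell+m+\alpha+2)}/\big((q^{\ell+2};q)_{m-1}(q^{\ell+\alpha+3};q)_{m-1}\big)$, and verify that the product equals the claimed value; this is routine bookkeeping. The exponents combine as $(2\ell+\alpha+2)+(m-1)(2\ell+m+\alpha+2)=m(2\ell+m+\alpha+1)$, and the one-term splittings $(q^{\ell+1};q)_m=(1-q^{\ell+1})(q^{\ell+2};q)_{m-1}$ together with $(q^{\ell+\alpha+2};q)_m=(1-q^{\ell+\alpha+2})(q^{\ell+\alpha+3};q)_{m-1}$ reassemble the denominator into $(q^{\ell+1};q)_m(q^{\ell+\alpha+2};q)_m$, as required. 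The only genuine subtlety is the legitimacy of summing the smallest index first, which the positivity observation of the first paragraph resolves; everything else is the exponent and $q$-Pochhammer arithmetic just indicated.
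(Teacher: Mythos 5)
Your proof is correct and follows essentially the same route as the paper's: induction on $m$ with \eqref{eq:Tk-sum} as the base case and an inductive step that sums over the smallest index $k_1$ via \eqref{eq:Tk-Tm-sum}, producing the recursion $A_m(\ell)=\frac{q^{2\ell+\alpha+2}}{(1-q^{\ell+1})(1-q^{\ell+\alpha+2})}A_{m-1}(\ell+1)$. The only addition is your explicit positivity/Tonelli remark justifying the rearrangement, which the paper leaves implicit; the exponent and $q$-Pochhammer bookkeeping checks out.
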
             
\begin{proof} We shall proceed by mathematical induction on $m$.
Equation \eqref{eq:Fchar-m-term} for $m=1$ coincides with \eqref{eq:Tk-sum}.
Suppose \eqref{eq:Fchar-m-term} is true for a given $m \in \NN$.
Then, in view of \eqref{eq:Tk-Tm-sum} and using the induction hypothesis,
\begin{align*}
	&\sum_{0 \leq k_{1} < k_{2} < \ldots < k_{m} < k_{m+1}}
	(q^{k_{1}};q^{-1})_{\ell}\,
	\frac
	{\tilde{T}_{k_{1}}^{(\alpha)}
	(\tilde{T}_{k_{2}}^{(\alpha)} - \tilde{T}_{k_{1}}^{(\alpha)})
	(\tilde{T}_{k_{3}}^{(\alpha)} - \tilde{T}_{k_{2}}^{(\alpha)})
	\dots
	(\tilde{T}_{k_{m+1}}^{(\alpha)} - \tilde{T}_{k_{m}}^{(\alpha)})}
	{\tilde{\lambda}_{k_{1}}^{(\alpha)} \tilde{\pi}_{k_{1}}^{(\alpha)}
	\tilde{\lambda}_{k_{2}}^{(\alpha)} \tilde{\pi}_{k_{2}}^{(\alpha)}
	\dots
	\tilde{\lambda}_{k_{m}}^{(\alpha)} \tilde{\pi}_{k_{m}}^{(\alpha)}
	\tilde{\lambda}_{k_{m+1}}^{(\alpha)} \tilde{\pi}_{k_{m+1}}^{(\alpha)}} \\
	&=
	\frac
	{q^{2\ell+\alpha+2}}
	{(1-q^{\ell+1}) (1-q^{\ell+\alpha+2})}\,
	\sum_{0 \leq k_{2} < k_{3} \ldots < k_{m} < k_{m+1}}
	(q^{k_{2}};q^{-1})_{\ell+1}\,
	\frac
	{\tilde{T}_{k_{2}}^{(\alpha)}
	(\tilde{T}_{k_{3}}^{(\alpha)} - \tilde{T}_{k_{2}}^{(\alpha)})
	\dots
	(\tilde{T}_{k_{m+1}}^{(\alpha)} - \tilde{T}_{k_{m}}^{(\alpha)})}
	{\tilde{\lambda}_{k_{2}}^{(\alpha)}\tilde{\pi}_{k_{2}}^{(\alpha)}
	\dots
	\tilde{\lambda}_{k_{m}}^{(\alpha)} \tilde{\pi}_{k_{m}}^{(\alpha)}
	\tilde{\lambda}_{k_{m+1}}^{(\alpha)} \tilde{\pi}_{k_{m+1}}^{(\alpha)}} \\
	&=
	\frac
	{q^{(m+1)\,(2\ell+m+\alpha+2)}}
	{(q^{\ell+1};q)_{m+1}(q^{\ell+\alpha+2};q)_{m+1}}.
\end{align*}
This shows the induction step. 
\end{proof}

\begin{proof}[Another proof of formula \eqref{eq:Fchar-alpha}] 
Letting $\ell = 0$ in \eqref{eq:Fchar-m-term} we obtain
\[
	\sum_{0 \leq k_{1} < k_{2} < \ldots < k_{m}}
	\frac
	{\tilde{T}_{k_{1}}^{(\alpha)}
	(\tilde{T}_{k_{2}}^{(\alpha)} - \tilde{T}_{k_{1}}^{(\alpha)})
	\dots
	(\tilde{T}_{k_{m}}^{(\alpha)} - \tilde{T}_{k_{m-1}}^{(\alpha)})}
	{\tilde{\lambda}_{k_{1}}^{(\alpha)} \tilde{\pi}_{k_{1}}^{(\alpha)}
	\dots
	\tilde{\lambda}_{k_{m-1}}^{(\alpha)} \tilde{\pi}_{k_{m-1}}^{(\alpha)}
	\tilde{\lambda}_{k_{m}}^{(\alpha)} \tilde{\pi}_{k_{m}}^{(\alpha)}}
	=
	\frac{q^{m\,(m+\alpha+1)}}{(q;q)_{m}(q^{\alpha+2};q)_{m}}.
\]
In view of \eqref{eq:Fchar-T}, the characteristic function equals
\[
	\tilde{\frakF}^{(\alpha)}(z)
	=
	1 + 
	\sum_{m=1}^{\infty}
	(-1)^{m} z^{m}
	\frac{q^{m\,(m+\alpha+1)}}{(q;q)_{m}(q^{\alpha+2};q)_{m}}
	=
	\rphis{0}{1}{-}{q^{\alpha+2}}{q,-q^{\alpha+2} z}. \qedhere
\]
\end{proof}

\begin{bibliography}{jacobi1}
	\bibliographystyle{amsplain}
\end{bibliography}
\end{document}